\numberwithin{equation}{subsection}
\newcommand{\sqsp}{\renewcommand{\baselinestretch}{1.3}\tiny\normalsize}
\newtheorem{theorem}[subsection]{Theorem}
\newtheorem{proposition}[subsection]{Proposition}
\newtheorem{corollary}[subsection]{Corollary}
\theoremstyle{definition}
\newtheorem{definition}[subsection]{Definition}
\newtheorem{example}[subsection]{Example}
\newtheorem{convention}[subsection]{Convention}
\newcommand{\bk}{\mathbf{k}}
\newcommand{\frakC}{\mathfrak{C}}
\newcommand{\sfP}{\mathsf{P}}
\newcommand{\sfF}{\mathsf{F}}
\newcommand{\UGr}{\mathsf{UGr}}
\DeclareMathOperator{\As}{\mathbf{As}}
\DeclareMathOperator{\AsMod}{\mathbf{AsMod}}
\DeclareMathOperator{\Lie}{\mathbf{Lie}}
\DeclareMathOperator{\ModAlg}{\mathbf{ModAlg}}
\DeclareMathOperator{\Ent}{\mathbf{Ent}}
\DeclareMathOperator{\YD}{\mathbf{YD}}
\DeclareMathOperator{\HopfMod}{\mathbf{HopfMod}}
\DeclareMathOperator{\Id}{Id} \DeclareMathOperator{\Der}{Der}
 \DeclareMathOperator{\Hom}{Hom}
\DeclareMathOperator{\End}{End} \DeclareMathOperator{\colim}{colim}
\def\calC{{\EuScript C}} \def\calD{{\EuScript D}}
\def\calF{{\EuScript F}} \def\seq#1#2{\{#1\}_{{#2}}}
\def\widedef{{\hskip 4mm := \hskip 4mm}}
\def\bfD{{\mathbf D}}
 \def\Ob{\mathrm {Ob}}
\def\Iso{{\mathbf {Iso}}}
\def\Riso{{\mathcal R}_{\rm iso}}
\def\pa{\partial}
\def\CIso#1{C^{#1}_\Iso(T,T)}
\def\Rada#1#2#3{#1_{#2},\dots,#1_{#3}} 
\def\ZZbbZbZ{
\thicklines
{% Picture saved by xtexcad 2.4
\unitlength=.3pt
\begin{picture}(80.00,40.00)(0.00,40.00)
\put(40.00,40.00){\makebox(0.00,0.00){$\bullet$}}
\put(20.00,20.00){\makebox(0.00,0.00){$\bullet$}}
\put(40.00,0.00){\line(-1,1){20.00}}
\put(40.00,40.00){\line(0,1){40.00}}
\put(80.00,0.00){\line(-1,1){40.00}}
\put(0.00,0.00){\line(1,1){40.00}}
\end{picture}}
\raisebox{-02pt}{\rule{0pt}{2pt}}
}
\def\CCbbCbC{
\thicklines
{% Picture saved by xtexcad 2.4
\unitlength=.3pt
\begin{picture}(80.00,40.00)(0.00,-40.00)
\put(40.00,-40.00){\makebox(0.00,0.00){$\bullet$}}
\put(20.00,-20.00){\makebox(0.00,0.00){$\bullet$}}
\put(40.00,0.00){\line(-1,-1){20.00}}
\put(40.00,-40.00){\line(0,-1){40.00}}
\put(80.00,0.00){\line(-1,-1){40.00}}
\put(0.00,0.00){\line(1,-1){40.00}}
\end{picture}}
\raisebox{-02pt}{\rule{0pt}{2pt}}
}
\def\ZbZbbZZ{
\thicklines
{% Picture saved by xtexcad 2.4
\unitlength=.3pt
\begin{picture}(80.00,40.00)(0.00,40.00)
\put(40.00,40.00){\makebox(0.00,0.00){$\bullet$}}
\put(60.00,20.00){\makebox(0.00,0.00){$\bullet$}}
\put(40.00,0.00){\line(1,1){20.00}}
\put(40.00,40.00){\line(0,1){40.00}}
\put(80.00,0.00){\line(-1,1){40.00}}
\put(0.00,0.00){\line(1,1){40.00}}
\end{picture}}
\raisebox{-20pt}{\rule{0pt}{2pt}}
}
\def\CbCbbCC{
\thicklines
{% Picture saved by xtexcad 2.4
\unitlength=.3pt
\begin{picture}(80.00,40.00)(0.00,-40.00)
\put(40.00,-40.00){\makebox(0.00,0.00){$\bullet$}}
\put(60.00,-20.00){\makebox(0.00,0.00){$\bullet$}}
\put(40.00,0.00){\line(1,-1){20.00}}
\put(40.00,-40.00){\line(0,-1){40.00}}
\put(80.00,0.00){\line(-1,-1){40.00}}
\put(0.00,0.00){\line(1,-1){40.00}}
\end{picture}}
\raisebox{-20pt}{\rule{0pt}{2pt}}
}
\def\BfZZbbZbZ{
\thicklines
{% Picture saved by xtexcad 2.4
\unitlength=.3pt
\begin{picture}(80.00,40.00)(0.00,40.00)
\put(40.00,40.00){\makebox(0.00,0.00){$\bullet$}}
\put(20.00,20.00){\makebox(0.00,0.00){$\bullet$}}
\put(40.00,0.00){\line(-1,1){20.00}}
\put(37.50,40.00){\line(0,1){40.00}}
\put(42.50,40.00){\line(0,1){40.00}}
\put(80.00,3.50){\line(-1,1){40.00}}
\put(76.50,0.00){\line(-1,1){40.00}}
\put(0.00,0.00){\line(1,1){40.00}}
\end{picture}}
\raisebox{-20pt}{\rule{0pt}{2pt}}
}
\def\BfZbZbbZZ{
\thicklines
{% Picture saved by xtexcad 2.4
\unitlength=.3pt
\begin{picture}(80.00,40.00)(0.00,40.00)
\put(40.00,40.00){\makebox(0.00,0.00){$\bullet$}}
\put(60.00,20.00){\makebox(0.00,0.00){$\bullet$}}
\put(40.00,0.00){\line(1,1){20.00}}
\put(37.50,40.00){\line(0,1){40.00}}
\put(42.50,40.00){\line(0,1){40.00}}
\put(80.00,3.50){\line(-1,1){40.00}}
\put(76.50,0.00){\line(-1,1){40.00}}
\put(0.00,0.00){\line(1,1){40.00}}
\end{picture}}
\raisebox{-020pt}{\rule{0pt}{2pt}}
}
\def\BfZbZbbZZa{
\thicklines
{% Picture saved by xtexcad 2.4
\unitlength=.3pt
\begin{picture}(80.00,40.00)(0.00,40.00)
\put(40.00,40.00){\makebox(0.00,0.00){$\bullet$}}
\put(60.00,20.00){\makebox(0.00,0.00){$\bullet$}}
\put(42.50,1.00){\line(1,1){20.00}}
\put(38,4.50){\line(1,1){20.00}}
\put(37.50,40.00){\line(0,1){40.00}}
\put(42.50,40.00){\line(0,1){40.00}}
\put(80.00,3.50){\line(-1,1){40.00}}
\put(76.50,0.00){\line(-1,1){40.00}}
\put(0.00,0.00){\line(1,1){40.00}}
\end{picture}}
\raisebox{-020pt}{\rule{0pt}{2pt}}
}
\def\GerdaI{
{% Picture saved by xtexcad 2.4
\unitlength=.25pt
\begin{picture}(80.00,50.00)(0.00,50.00)
\put(15.00,20.00){\makebox(0.00,0.00){$\bullet$}}
\put(75.00,60.00){\makebox(0.00,0.00){$\bullet$}}
\put(15.00,60.00){\makebox(0.00,0.00){$\bullet$}}
\put(45.00,80.00){\makebox(0.00,0.00){$\bullet$}}
\put(15.00,0.00){\line(0,1){20.00}}
\put(3,0){\qbezier(76.00,62.00)(105.50,40.00)(105.00,0.00)}
\put(-3,0){\qbezier(74.00,60.00)(105.50,40.00)(105.00,0.00)}
\put(17.50,62.50){\line(1,-1){18}}
\put(12.50,57.50){\line(1,-1){18}}
\put(.5,2.5){\qbezier(50.00,30.00)(60.00,20.00)(63.00,-4.5)}
\put(-4.5,-2.5){\qbezier(50.00,30.00)(60.00,20.00)(60.00,0.00)}

\qbezier(15.00,60.00)(-15.00,40.00)(15.00,20.00)
\put(75.00,60.00){\line(-3,-2){60.00}}
\put(-3.5,0){\put(45.00,80.00){\line(0,1){20.00}}}
\put(3.5,0){\put(45.00,80.00){\line(0,1){20.00}}}
\put(2.2,2.2){\put(75.00,60.00){\line(-3,2){30.00}}}
\put(-2.2,-2.2){\put(75.00,60.00){\line(-3,2){30.00}}}
\put(15.00,60.00){\line(3,2){30.00}}
\end{picture}}
}
\def\GerdaII
\def\GerdaIIinv
\def\GerdaIII
\def\GerdaIIIinv
\def\motylek{
{% Picture saved by xtexcad 2.4
\unitlength=.55pt
\begin{picture}(50.00,50.00)(0.00,23.00)
\thicklines
\qbezier(40.00,40.00)(60.00,25.00)(40.00,10.00)
\qbezier(10.00,40.00)(-10.00,25.00)(10.00,10.00)
\put(40.00,10.00){\makebox(0.00,0.00){$\bullet$}}
\put(40.00,40.00){\makebox(0.00,0.00){$\bullet$}}
\put(10.00,10.00){\makebox(0.00,0.00){$\bullet$}}
\put(10.00,40.00){\makebox(0.00,0.00){$\bullet$}}
\put(10.00,10.00){\line(0,-1){10.00}}
\put(10.00,10.00){\line(1,1){20.00}}
\put(40.00,40.00){\line(-1,-1){10.00}}
\put(40.00,50.00){\line(0,-1){10.00}}
\put(40.00,10.00){\line(0,-1){10.00}}
\put(30.00,20.00){\line(1,-1){10.00}}
\put(10.00,40.00){\line(1,-1){10.00}}
\put(10.00,50.00){\line(0,-1){10.00}}
\end{picture}}
}
\def\motylekfat{
{% Picture saved by xtexcad 2.4
\unitlength=.55pt
\begin{picture}(50.00,50.00)(0.00,23.00)
\thicklines
\qbezier(40.00,40.00)(60.00,25.00)(40.00,10.00)
\qbezier(10.00,40.00)(-10.00,25.00)(10.00,10.00)
\put(40.00,10.00){\makebox(0.00,0.00){$\bullet$}}
\put(40.00,40.00){\makebox(0.00,0.00){$\bullet$}}
\put(10.00,10.00){\makebox(0.00,0.00){$\bullet$}}
\put(10.00,40.00){\makebox(0.00,0.00){$\bullet$}}
\put(10.00,10.00){\line(0,-1){10.00}}
\put(10.00,10.00){\line(1,1){20.00}}
\put(40.00,40.00){\line(-1,-1){10.00}}
\put(40.00,50.00){\line(0,-1){10.00}}
\put(38,10.00){\line(0,-1){10.00}}
\put(41.50,10.00){\line(0,-1){10.00}}
\put(29,18.50){\line(1,-1){10.00}}
\put(31.50,21.50){\line(1,-1){10.00}}
\put(11.50,41.50){\line(1,-1){10.00}}
\put(9,39){\line(1,-1){10.00}}
\put(8.50,50.00){\line(0,-1){10.00}}
\put(11.50,50.00){\line(0,-1){10.00}}
\end{picture}}
}
\def\dvojiteypsilon{
\unitlength=.45pt
\thicklines
\begin{picture}(40.00,60.00)(0.00,28.00)
\put(20.00,20.00){\makebox(0.00,0.00){$\bullet$}}
\put(20.00,40.00){\makebox(0.00,0.00){$\bullet$}}
\put(20.00,20.00){\line(1,-1){20.00}}
\put(20.00,20.00){\line(-1,-1){20.00}}
\put(20.00,40.00){\line(0,-1){20.00}}
\put(20.00,40.00){\line(-1,1){20.00}}
\put(20.00,40.00){\line(1,1){20.00}}
\end{picture}
}
\def\muu{
\hskip .2em
\thicklines
{% Picture saved by xtexcad 2.4
\unitlength=.2pt
\begin{picture}(60.00,20.00)(0.00,10.00)
\put(30.00,30.00){\makebox(0.00,0.00){$\bullet$}}
\put(30.00,30.00){\line(1,-1){30.00}}
\put(0.00,0.00){\line(1,1){30.00}}
\put(30.00,30.00){\line(0,1){35}}
\end{picture}}
\hskip .2em
}
\def\Deltaa{
\hskip .2em
\thicklines
{% Picture saved by xtexcad 2.4
\unitlength=.2pt
\begin{picture}(60.00,20.00)(0.00,-50.00)
\put(30.00,-30.00){\makebox(0.00,0.00){$\bullet$}}
\put(30.00,-30.00){\line(1,1){30.00}}
\put(0.00,0.00){\line(1,-1){30.00}}
\put(30.00,-30.00){\line(0,-1){35}}
\end{picture}}
\hskip .2em
}
\def\lambdaa{
\hskip .2em
\thicklines
{% Picture saved by xtexcad 2.4
\unitlength=.2pt
\begin{picture}(60.00,20.00)(0.00,10.00)
\put(30.00,30.00){\makebox(0.00,0.00){$\bullet$}}
\put(33.00,33.00){\line(1,-1){30.00}}
\put(27.00,27.00){\line(1,-1){30.00}}
\put(0.00,0.00){\line(1,1){30.00}}
\put(34.00,30.00){\line(0,1){35}}
\put(26.00,30.00){\line(0,1){35}}
\end{picture}}
\hskip .2em
}
\def\muufat{
\hskip .2em
\thicklines
{% Picture saved by xtexcad 2.4
\unitlength=.2pt
\begin{picture}(60.00,20.00)(0.00,10.00)
\put(30.00,30.00){\makebox(0.00,0.00){$\bullet$}}
\put(33.00,33.00){\line(1,-1){30.00}}
\put(27.00,27.00){\line(1,-1){30.00}}
\put(-3.00,3.00){\line(1,1){30.00}}
\put(3.00,-3.00){\line(1,1){30.00}}
\put(34.00,30.00){\line(0,1){35}}
\put(26.00,30.00){\line(0,1){35}}
\end{picture}}
\hskip .2em
}
\def\Deltaafat{
\hskip .2em
\thicklines
{% Picture saved by xtexcad 2.4
\unitlength=.2pt
\begin{picture}(60.00,20.00)(0.00,-50.00)
\put(30.00,-30.00){\makebox(0.00,0.00){$\bullet$}}
\put(33.00,-33.00){\line(1,1){30.00}}
\put(27.00,-27.00){\line(1,1){30.00}}
\put(-3.00,-3.00){\line(1,-1){30.00}}
\put(3.00,3.00){\line(1,-1){30.00}}
\put(34.00,-30.00){\line(0,-1){35}}
\put(26.00,-30.00){\line(0,-1){35}}
\end{picture}}
\hskip .2em
}
\def\rhoo{
\hskip .2em
\thicklines
{% Picture saved by xtexcad 2.4
\unitlength=.2pt
\begin{picture}(60.00,20.00)(0.00,-50.00)
\put(30.00,-30.00){\makebox(0.00,0.00){$\bullet$}}
\put(30.00,-30.00){\line(1,1){30.00}}
\put(-3.00,-3.00){\line(1,-1){30.00}}
\put(3.00,3.00){\line(1,-1){30.00}}
\put(34.00,-30.00){\line(0,-1){35}}
\put(26.00,-30.00){\line(0,-1){35}}
\end{picture}}
\hskip .2em
}
\def\psii{
\hskip .2em
{% Picture saved by xtexcad 2.4
\thicklines
\unitlength=.35pt
\begin{picture}(40.00,40.00)(0.00,10.00)
\put(20.00,20.00){\makebox(0.00,0.00){$\bullet$}}
\put(0.00,40.00){\line(1,-1){40.00}}
\put(-2.00,2.00){\line(1,1){40.00}}
\put(2.00,-2.00){\line(1,1){40.00}}
\end{picture}}
\hskip .2em
}
\def\lii{
{% Picture saved by xtexcad 2.4
\unitlength=.4pt
\begin{picture}(80.00,30.00)(0.00,60.00)
\thicklines
\put(40.00,20.00){\makebox(0.00,0.00){$\bullet$}}
\put(60.00,40.00){\makebox(0.00,0.00){$\bullet$}}
\put(40.00,100.00){\makebox(0.00,0.00){$\bullet$}}
\put(20.00,80.00){\makebox(0.00,0.00){$\bullet$}}
\put(2.50,100.00){\line(0,1){20.00}}
\put(-2.50,100.00){\line(0,1){20.00}}
\put(77.50,20.00){\line(0,-1){20.00}}
\put(82.50,20.00){\line(0,-1){20.00}}
\put(-2.5,0){\qbezier(60.00,40.00)(80.00,30.00)(80.00,20.00)}
\put(2.5,0){\qbezier(60.00,43.00)(80.00,30.00)(80.00,20.00)}
\put(-2.5,0){\qbezier(20.00,80.00)(0.00,90.00)(0.00,100.00)}
\put(2.5,0){\qbezier(20.00,83.00)(0.00,90.00)(0.00,100.00)}
\qbezier(60.00,80.00)(60.00,70.00)(40.00,60.00)
\qbezier(20.00,40.00)(20.00,50.00)(40.00,60.00)
\qbezier(40.00,20.00)(20.00,30.00)(20.00,40.00)
\qbezier(40.00,100.00)(60.00,90.00)(60.00,80.00)
\put(-2.5,0){\qbezier(47.00,57.00)(60.00,50.00)(60.00,40.00)}
\put(2.5,0){\qbezier(49.00,57.00)(60.00,50.00)(60.00,40.00)}
\put(-2.5,0){\qbezier(20.00,80.00)(20.00,70.00)(31.00,63.00)}
\put(2.5,0){\qbezier(20.00,80.00)(20.00,70.00)(35.00,63.00)}
\put(40.00,20.00){\line(0,-1){20.00}}
\put(60.00,40.00){\line(-1,-1){20.00}}
\put(40.00,100.00){\line(-1,-1){20.00}}
\put(40.00,120.00){\line(0,-1){20.00}}
\end{picture}}
}
\def\hnida#1{
{% Picture saved by xtexcad 2.4
\unitlength=.5pt
\begin{picture}(40.00,40.00)(0.00,0.00)
\put(38,20){\makebox(0.00,0.00)[l]{$#1$}}
\put(24.00,2.00){\makebox(0.00,0.00){$...$}}
\put(24.00,38.00){\makebox(0.00,0.00){$...$}}
\put(20.00,20.00){\makebox(0.00,0.00){$\bullet$}}
\put(40.00,0.00){\line(-1,1){20.00}}
\put(10.00,0.00){\line(1,2){10.00}}
\put(0.00,0.00){\line(1,1){20.00}}
\put(20.00,20.00){\line(1,1){20.00}}
\put(20.00,20.00){\line(-1,2){10.00}}
\put(20.00,20.00){\line(-1,1){20.00}}
\end{picture}}
}
\begin{document}
\bibliographystyle{alpha}
\title{The $L_\infty$-deformation complex of diagrams of algebras}
\author[Y. Fr\'egier, M. Markl, D. Yau]{Yael Fr\'egier, Martin Markl,
  Donald Yau}
\thanks{The first author worked in the frame of grant F1R-MTH-PUL-08GEOQ
  of Professor Schlichenmaier.
The second author was supported by the grant GA \v CR
201/08/0397 and by the Academy of Sciences of the Czech Republic,
Institutional Research Plan No.~AV0Z10190503.}
\keywords{Deformation, colored PROP, diagram of algebras, strongly
  homotopy Lie algebra}
\subjclass[2000]{14D15,20G10}

\def\martin{\noindent{\bf Martin:\ }}
\def\endmartin{ \hfill\rule{10mm}{.75mm} \break}

\def\yael{\noindent{\bf Yael:\ }}
\def\endyael{ \hfill\rule{10mm}{.75mm} \break}

%\email{}
%\address{}

\email{yael.fregier@uni.lu}
\address{Mathematics Research Unit, 162A, avenue de la faiencerie  L-1511, Luxembourg,  Grand duchy of Luxembourg.}

\email{markl@math.cas.cz}
\address{Mathematical Institute of the Academy,
\v{Z}itn\'{a} 25, 115 67 Prague 1, The Czech Republic}

\email{dyau@math.ohio-state.edu}
\address{Department of Mathematics, The Ohio State University at Newark, 1179 University Drive, Newark, OH 43055, USA}

\date{\today}

\begin{abstract}
The deformation complex of an algebra over a colored PROP $\sfP$ is
defined in terms of a minimal (or, more generally, cofibrant)
model of $\sfP$. It is shown that it carries
the structure of an $L_\infty$-algebra which induces a graded Lie
bracket on cohomology.

As an example, the $L_\infty$-algebra structure on the deformation
complex of an associative algebra morphism $g$ is constructed.
Another example is the deformation complex of a Lie algebra
morphism. The last example is the diagram describing two mutually
inverse morphisms of vector spaces. Its $L_\infty$-deformation
complex has nontrivial $l_0$-term.

Explicit formulas for the $L_\infty$-operations in the above examples are
given. A typical deformation complex of a diagram
of algebras is a fully-fledged $L_\infty$-algebra with nontrivial
higher operations.
\end{abstract}

\maketitle

\tableofcontents

\sqsp

%%%%%%%%%%%%%%%%%%%%%%
%%
%%  Introduction
%%
%%%%%%%%%%%%%%%%%%%%%%
\section{Introduction}
\label{sec:intro}

In this paper, we construct the deformation complex $(C^*_\sfP(T;T),
\delta_\sfP)$ of an algebra $T$ over a colored PROP $\sfP$ and
observe that it has the structure of an $L_\infty$-algebra.
The cochain complex $(C^*_\sfP(T;T), \delta_\sfP)$ is so named because
its $L_\infty$-structure governs the deformations of $T$ in the form
of the \emph{Quantum Master Equation}
\eqref{eq:qme} (Section \ref{subsec:deformations}).

The existence of an $L_\infty$-structure on the deformation complex
of an algebra over an operad was proved in 2002 by van der Laan
\cite{van}. Van der Laan's construction was later generalized, in
\cite{mv}, to algebras over properads. The present paper will,
however, be based on the approach of the 2004 preprint
\cite{markl07b}.

Considering colored PROPs is necessary if one is to study 
$L_\infty$-deformations of, say, morphisms or more general diagrams of algebras
over a PROP, module-algebras, modules over an associative algebra, and
Yetter-Drinfel'd and Hopf modules over a bialgebra.  For example,
there is a $2$-colored PROP $\As_{\texttt{B} \to \texttt{W}}$ whose
algebras are of the form $f \colon U \to V$, in which $U$ and $V$ are
associative algebras and $f$ is a morphism of associative algebras
(Example \ref{ex:morphisms}).  Likewise, there is a $2$-colored PROP
$\ModAlg$ whose algebras are of the form $(H,A)$, in which $H$ is a
bialgebra and $A$ is an $H$-module-algebra (Example \ref{ex:modalg}).
Other examples of colored PROP algebras are given at the end of
Section~\ref{MAGDA}.

Here is a sketch of the construction of the deformation complex
$(C^*_\sfP(T;T), \delta_\sfP)$, with details given in Section \ref{sec:minimal}.  First we take a \emph{minimal model} (Definition \ref{def:minimal}) $\alpha \colon (\sfF(E), \partial) \to \sfP$ of the colored PROP $\sfP$, which should be thought of as a resolution of $\sfP$. Given a $\sfP$-algebra $\rho \colon \sfP \to \End_T$, we define
\[
C^*_\sfP(T;T) = \Der(\sfF(E), \mathcal{E}),
\]
in which $\mathcal{E} = \End_T$ is considered an $\sfF(E)$-module via the morphism $\beta = \rho\alpha$, and $\Der(\sfF(E), \mathcal{E})$ denotes
the vector space of derivations $\sfF(E) \to \mathcal{E}$.  The latter has a natural differential $\delta$ that sends $\theta \in \Der(\sfF(E),
\mathcal{E})$ to $\theta \partial$.

%It is shown in \cite[Theorem 7]{markl07b} (in the $1$-colored case) that $\Der(\sfF(E), \sfF(E) \ast \mathcal{E})$ is naturally a graded pre-Lie algebra, where $\ast$ denotes the free product of PROPs.  This pre-Lie structure determines a unique symmetric brace algebra structure \cite{go,ladamarkl05} on $\Der(\sfF(E), \sfF(E) \ast \mathcal{E})$, which induces the $L_\infty$-structure on $\Der(\sfF(E), \mathcal{E})$.  The same argument applies verbatim to the colored case, showing that $C^*_\sfP(T;T)$ is an $L_\infty$-algebra.

The $L_\infty$-operations on $C^*_\sfP(T;T)$ are constructed using graph substitutions (Section \ref{subsec:lk}).
 The usefulness of this very explicit construction of the $L_\infty$-operations on $C^*_\sfP(T;T)$ is first illustrated
 with the example of associative algebra morphisms.   For a morphism $g \colon U \to V$ of associative algebras, considered
 as an algebra over the $2$-colored PROP $\As_{\texttt{B} \to \texttt{W}}$, we are able to write down explicitly all the
 $L_\infty$-operations $l_k$ on the deformation complex of $g$ (Theorem \ref{thm:deltatheta} for $k = 1$, Theorem \ref{thm:l2}
 for $k = 2$, and Theorem \ref{thm:lktheta} for $k \geq 3$).  As expected, the underlying cochain complex of the deformation
  complex of $g$ is isomorphic to the Gerstenhaber-Schack\footnote{Be careful with the possible confusion with the complex associated to bialgebras. Both of these complexes share the same name} cochain
complex~\cite{gs83,gs85,gs88} of~$g$ (Theorem \ref{thm:deltatheta}).
   Therefore, the latter also has an explicit $L_\infty$-structure.  See Section \ref{subsec:back} for more discussion about this deformation complex.

A second example is given by the study of the case of Lie
algebra morphisms. As in the associative case,
there exists a
2-colored PROP $\Lie_{\texttt{B} \to \texttt{W}}$ whose 2-colored
algebras are morphism of
Lie algebras. We obtain then an explicit expression for the
$L_\infty$-operations (Theorem \ref{thm:deltathetaL} for $k = 1$,
Theorem \ref{thm:l2L} for $k = 2$, and Theorem \ref{thm:lkthetaL}
for $k \geq 3$). In particular the first operation $l_1$ gives a
complex isomorphic to the S-cohomology complex
\cite{fregier}. Hence this answers the
 question left open in \cite{fregier} of the existence of such an
 $L_\infty$-structure.

 Another example of the $L_\infty$-deformation complex
is given in Section \ref{MAGDA}. There is a $2$-colored operad
$\Iso$ (Example \ref{Iso}) whose algebras are of the form $F \colon
U \leftrightarrows V \colon G$, in which $U$ and $V$ are chain
complexes and $F$ and $G$ are mutually inverse chain maps. Using a
slight modification of the results and constructions of earlier
sections, we will write down explicitly the $L_\infty$-operations on
the deformation complex of a typical $\Iso$-algebra $T$ (Example
\ref{ex:LinftyIso}).

\noindent
{\bf Acknowledgment:} 
We would like to thank Jim Stasheff and Bruno Vallette for reading the
first version of the manuscript and many useful remarks.

%%%%%%%%%%%%%%%%%%%%
%%
%%  Preliminaries
%%
%%%%%%%%%%%%%%%%%%%%
\section{Preliminaries on colored PROPs}
\label{sec:prelim}

Fix a ground field $\bk$, assumed to be of characteristic $0$.  This
assumption is useful in considering models for operads or PROPs since
it guarantees the existence of the `averagization' of a
non-equivariant map into an equivariant one. The characteristic zero
assumption also simplifies concepts of Lie algebras and their
generalizations.

In this section, we review some basic definitions
about colored PROPs (and colored operads as their particular
instances), their algebras, colored $\Sigma$-bimodules, and free
colored PROPs.  Examples of algebras over colored PROPs can be found
at the end of this section.

%%%%%%%%%%%%%%%%%%%%%%%%%%%%%%%%%%%%%%
\subsection{Colored $\Sigma$-bimodule}
\label{subsec:ColoredSigma}
Let $\frakC$ be a non-empty set whose elements are called \emph{colors}.  A \emph{$\Sigma$-bimodule} is a collection $E = \lbrace E(m,n) \rbrace_{m,n \geq 0}$ of $\bk$-modules in which each $E(m,n)$ is equipped with a left $\Sigma_m$ and a right $\Sigma_n$ actions that commute with each other.

A \emph{$\frakC$-colored $\Sigma$-bimodule} is a $\Sigma$-bimodule $E$ in which each $E(m,n)$ admits a $\frakC$-colored decomposition into submodules,
\begin{equation}
\label{eq:Edecomp}
E(m,n) = \bigoplus_{c_i,d_j \in \frakC} E\binom{d_1, \ldots , d_m}{c_1 , \ldots , c_n},
\end{equation}
that is compatible with the $\Sigma_m$-$\Sigma_n$-actions.
Elements of $E(m,n)$ are said to have \emph{biarity $(m,n)$}.
A \emph{morphism}
of $\frakC$-colored $\Sigma$-bimodules
is a linear bi-equivariant map
that respects the $\frakC$-colored decompositions \eqref{eq:Edecomp}.

$\frakC$-colored $\Sigma$-bimodules and their
morphism are examples of $\frakC$-colored objects; more examples will
follow. If $\frakC$ has $k$ elements, we will sometimes
call $\frakC$-colored objects simply {\em $k$-colored} objects.

\begin{definition}
\label{def:coloredPROP}
A \emph{$\frakC$-colored PROP} (\cite{maclane63,maclane65}, \cite[Section 8]{markl07}) is a $\frakC$-colored $\Sigma$-bimodule $\sfP = \lbrace \sfP(m,n) \rbrace$ (so each $\sfP(m,n)$ admits a $\frakC$-colored decomposition \eqref{eq:Edecomp}) that comes equipped with two operations: a \emph{horizontal composition}
\[
\otimes \colon
\sfP\binom{d_{11}, \ldots , d_{1m_1}}{c_{11}, \ldots , c_{1n_1}} \otimes \cdots \otimes
\sfP\binom{d_{s1}, \ldots , d_{sm_s}}{c_{s1}, \ldots , c_{sn_s}} \to
\sfP\binom{d_{11}, \ldots , d_{sm_s}}{c_{11}, \ldots , c_{sn_s}} \subseteq
\sfP(m_1 + \cdots + m_s, n_1 + \cdots + n_s)
\]
and a \emph{vertical composition}
\begin{equation}
\label{eq:vertcomp}
\circ \colon
\sfP\binom{d_1, \ldots , d_m}{c_1, \ldots , c_n} \otimes
\sfP\binom{b_1, \ldots , b_n}{a_1, \ldots , a_k} \to  \sfP\binom{d_1,
  \ldots , d_m}{a_1, \ldots , a_k} \subseteq \sfP(m,k), \quad
(x,y) \mapsto x \circ y.
\end{equation}

These two compositions are required to satisfy some associativity-type axioms.  There is also a unit element $\texttt{1}_c \in \sfP\binom{c}{c}$ for each color $c$.  Moreover, the vertical composition $x \circ y$ in \eqref{eq:vertcomp} is $0$, unless
\[
c_i = b_i \quad \text{for} \quad 1 \leq i \leq n.
\]
\emph{Morphisms} of $\frakC$-colored PROPs are
unit-preserving morphisms of the underlying $\Sigma$-bimodules that commute with both the horizontal and the vertical compositions.
\end{definition}

{\em Colored operads\/} are particular
cases of colored PROPs such that $\sfP\binom{d_1, \ldots , d_m}{a_1,
  \ldots , a_k} = 0$ for $m \geq 2$.
Note that colored PROPs can also be defined as
ordinary ($1$-colored) PROPs over the semisimple algebra $K = \oplus_{c \in \frakC} \bk_c$, where each $\bk_c$ is a copy of the ground field $\bk$ \cite[Section 2]{markl04}.

\begin{example}
\label{ex:EndPROP}
The \emph{$\frakC$-colored endomorphism PROP} $\End^\frakC_T$ of a
 $\frakC$-graded module $T = \oplus_{c \in \frakC} T_c$ is the $\frakC$-colored PROP with
\[
\End^\frakC_T\binom{d_1, \ldots , d_m}{c_1 , \ldots , c_n}
= \Hom_\bk(T_{c_1} \otimes \cdots \otimes T_{c_n}, T_{d_1} \otimes \cdots \otimes T_{d_m}).
\]
The horizontal composition is given by tensor products of $\bk$-linear maps.  The vertical composition is given by composition of $\bk$-linear maps with matching colors.
\end{example}

\begin{definition}
For a $\frakC$-colored PROP $\sfP$, a \emph{$\sfP$-algebra} is a morphism of $\frakC$-colored PROPs
\[
\alpha \colon \sfP \to \End^\frakC_T
\]
for some $\frakC$-graded module $T = \oplus_{c \in \frakC} T_c$.  In this case, we say that $T$ is a $\sfP$-algebra.
\end{definition}

%%%%%%%%%%%%%%%%%%%%%%%%%%%%%%%%%%%%%%%%%%%%%%%%%%%%%%
\subsection{Pasting scheme for $\frakC$-colored PROPs}
\label{subsec:scheme}

For $m,n \geq 1$, let $\UGr^\frakC(m,n)$ be the set whose elements are pairs $(G, \zeta)$ such that:

\begin{enumerate}
\item $G \in \UGr(m,n)$ is a directed $(m,n)$-graph \cite[p.38]{markl07}.
\item For each vertex $v \in Vert(G)$, the sets $out(v)$ (outgoing edges from $v$) and $in(v)$ (incoming edges to $v$) are labeled $1, \ldots , q$ and $1, \ldots , p$, respectively, where $\# out(v) = q$ and $\# in(v) = p$.
\item $\zeta \colon edge(G) \to \frakC$ is a function that assigns to each edge in $G$ a color in $\frakC$.  For any edge $l \in edge(G)$, $\zeta(l) \in \frakC$ is called the \emph{color of $l$}.
\end{enumerate}
There is a $\frakC$-colored decomposition
\[
%\label{eq:GrDecomp}
\UGr^\frakC(m,n) = \coprod_{c_i, d_j \in \frakC} \UGr^\frakC\binom{d_1, \ldots , d_m}{c_1, \ldots , c_n},
\]
where $(G,\zeta) \in \UGr^\frakC\binom{d_1, \ldots , d_m}{c_1, \ldots , c_n}$ if and only if the input legs $\lbrace l^1_{in}, \ldots l^n_{in} \rbrace$ of $G$ have colors $c_1, \ldots , c_n$ and the output legs $\lbrace l^1_{out}, \ldots , l^m_{out}\rbrace$ of $G$ have colors $d_1, \ldots , d_m$.

As in \cite{markl07}, $\UGr^\frakC(m,n)$ and $\UGr^\frakC\binom{d_1, \ldots , d_m}{c_1, \ldots , c_n}$ are categories with color-respecting isomorphisms as morphisms.  Elements in $\UGr^\frakC(m,n)$ are called \emph{$\frakC$-colored directed $(m,n)$-graphs}.

%%%%%%%%%%%%%%%%%%%%%%%%%%%%%%%%%%%%%%%%%%%%%%%%%%
\subsection{Decoration on colored directed graphs}
\label{subsec:decor}

Let $E$ be a $\frakC$-colored $\Sigma$-bimodule and $(G,\zeta)$ be a $\frakC$-colored directed $(m,n)$-graph.  Define
\begin{equation}
\label{eq:E(G)}
E(G,\zeta) = \bigotimes_{v \in Vert(G)} E \binom{\zeta(o_v^1) , \ldots , \zeta(o_v^q)}{\zeta(i^v_1) , \ldots , \zeta(i^v_p)},
\end{equation}
where $in(v) = \lbrace i^v_1, \ldots , i^v_p \rbrace$ and $out(v) =
\lbrace o_v^1, \ldots , o_v^q \rbrace$.  Its elements are called
\emph{$E$-decorated $\frakC$-colored directed $(m,n)$-graphs}.

For an element $\Gamma = \otimes_v e_v \in E(G,\zeta)$, the element $e_v \in E\binom{\zeta(o_v^1) , \ldots , \zeta(o_v^q)}{\zeta(i^v_1) , \ldots , \zeta(i^v_p)}$ corresponding to the vertex $v \in Vert(G)$ is called the \emph{decoration of $v$}.

In other words, $E(G,\zeta)$ is the space of decorations of the vertices of the $\frakC$-colored directed $(m,n)$-graph $(G,\zeta)$ with elements of $E$ with matching biarity and colors.

%%%%%%%%%%%%%%%%%%%%%%%%%%%%%%
\subsection{Free colored PROP}
\label{subsec:free}

Let $E$ be a $\frakC$-colored $\Sigma$-bimodule.  For $c_1, \ldots , c_n, d_1, \ldots , d_m \in \frakC$, define the module
\[
%\begin{equation}
%label{eq:F(E)}
\sfF^\frakC(E)\binom{d_1, \ldots , d_m}{c_1 , \ldots , c_n}
= \colim E(G,\zeta),
%\end{equation}
\]
where the colimit is taken over the category $\UGr^\frakC\binom{d_1, \ldots , d_m}{c_1, \ldots , c_n}$.
Then
\[
\sfF^\frakC(E) = \left\lbrace \sfF^\frakC(E)(m,n) = \bigoplus_{c_i, d_j \in \frakC} \sfF^\frakC(E)\binom{d_1, \ldots , d_m}{c_1, \ldots , c_n} \right\rbrace
\]
is a $\frakC$-colored PROP, in which the horizontal composition $\otimes$ is given by disjoint union of $E$-decorated $\frakC$-colored directed $(m,n)$-graphs.  The vertical composition $\circ$ in $\sfF^\frakC(E)$ is given by grafting of $\frakC$-colored legs with matching colors.

Note that there is a natural $\mathbb{Z}_{\geq 0}$-grading,
\[
\sfF^\frakC(E) = \bigoplus_{k \geq 0} \sfF^\frakC_k(E),
\]
where $\sfF^\frakC_k(E)$ is the submodule generated by the monomials involving $k$ elements in $E$.

\begin{proposition}[= $\frakC$-colored version of Proposition 57 in \cite{markl07}]
$\sfF^\frakC(E) = \lbrace \sfF^\frakC(E)(m,n) \rbrace$  is the free $\frakC$-colored PROP generated by the $\frakC$-colored $\Sigma$-bimodule $E$.  In other words, the functor $\sfF^\frakC$ is the left adjoint of the forgetful functor from $\frakC$-colored PROPs to $\frakC$-colored $\Sigma$-bimodules.
\end{proposition}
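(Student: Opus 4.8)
The plan is to prove the statement by verifying the universal property of the free object directly. I will exhibit a canonical morphism of $\frakC$-colored $\Sigma$-bimodules $\iota\colon E \to \sfF^\frakC(E)$ and show that for every $\frakC$-colored PROP $\sfP$ and every morphism of $\frakC$-colored $\Sigma$-bimodules $\phi\colon E \to \sfP$ (where $\sfP$ is regarded as a $\Sigma$-bimodule by forgetting its compositions), there is a unique morphism of $\frakC$-colored PROPs $\widetilde\phi\colon \sfF^\frakC(E) \to \sfP$ with $\widetilde\phi\circ\iota = \phi$. The inclusion $\iota$ sends an element $e\in E\binom{d_1,\ldots,d_m}{c_1,\ldots,c_n}$ to the class of the one-vertex graph (the corolla with $n$ input legs and $m$ output legs, whose legs are colored by the $c_i$ and $d_j$) decorated by $e$; this is manifestly bi-equivariant and respects the $\frakC$-colored decomposition.

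To construct $\widetilde\phi$, recall that by definition every element of $\sfF^\frakC(E)\binom{d_1,\ldots,d_m}{c_1,\ldots,c_n}$ is represented by an $E$-decorated $\frakC$-colored graph $\Gamma=\otimes_{v}e_v\in E(G,\zeta)$. I would define $\widetilde\phi(\Gamma)$ by \emph{contracting $G$ inside $\sfP$}: replace each decoration $e_v$ by $\phi(e_v)\in\sfP$, with biarity and colors dictated by $in(v)$, $out(v)$ and $\zeta$, and then compose these elements in $\sfP$ according to the combinatorics of $G$, using the horizontal composition $\otimes$ for vertices sitting in parallel and the vertical composition $\circ$ along each internal edge. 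Because $\phi$ respects the $\frakC$-colored decomposition, every internal edge $l$ is glued along a single color $\zeta(l)$, so the color-matching requirement that keeps $\circ$ from vanishing in $\sfP$ is automatically satisfied.

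The principal point to check, and the main obstacle, is that $\widetilde\phi(\Gamma)$ is well defined. A decorated graph $\Gamma$ can be assembled from its corollas by many different sequences of horizontal and vertical compositions, and I must show that all of them yield the same element of $\sfP$; this is exactly where the associativity-, unit-, and interchange-type axioms of the PROP $\sfP$ are used, and it is the colored analog of the corresponding verification in Proposition 57 of \cite{markl07}. Concretely, any two such decompositions are related by a finite sequence of applications of these axioms, so the value in $\sfP$ is independent of the chosen order; the argument is purely combinatorial, and the colors merely ride along the edges without introducing new relations. One also checks that the assignment descends to the colimit, i.e.\ is constant on color-respecting isomorphism classes of graphs, since such an isomorphism $(G,\zeta)\cong(G',\zeta')$ only relabels legs and vertices and $\phi$ is bi-equivariant. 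Granting well-definedness, the fact that $\widetilde\phi$ is a morphism of $\frakC$-colored PROPs is immediate from the construction: disjoint union of decorated graphs is sent to $\otimes$ in $\sfP$, grafting of legs with matching colors is sent to $\circ$, and the one-vertex corolla decorated by a unit is sent to the unit $\texttt{1}_c$.

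Uniqueness follows because $\sfF^\frakC(E)$ is generated by $\iota(E)$: every decorated graph is, by the description above, an iterated horizontal and vertical composite of decorated corollas, so any colored PROP morphism $\psi$ with $\psi\iota=\phi$ must take the value prescribed for $\widetilde\phi$ on each generator, and hence, by compatibility with the two compositions, on all of $\sfF^\frakC(E)$. This establishes the universal property and therefore the stated adjunction. Alternatively, one could bypass the graph combinatorics entirely by invoking the observation in \cite[Section 2]{markl04} that $\frakC$-colored PROPs are ordinary ($1$-colored) PROPs over the semisimple algebra $K=\oplus_{c\in\frakC}\bk_c$, and then quoting the uncolored Proposition 57 of \cite{markl07} verbatim; I would nonetheless retain the explicit decorated-graph construction, since the remainder of the paper manipulates $\sfF^\frakC(E)$ precisely through these representatives.
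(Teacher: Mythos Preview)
Your argument is correct and is essentially the standard verification of the universal property of a free PROP, carried out with the colors riding along. The paper, however, gives no proof at all: it simply records the proposition as the $\frakC$-colored analogue of Proposition~57 in \cite{markl07} and moves on. So there is nothing to compare against; your write-up supplies what the authors left implicit. The alternative you mention at the end---using the identification of $\frakC$-colored PROPs with ordinary PROPs over $K=\bigoplus_{c\in\frakC}\bk_c$ from \cite[Section~2]{markl04} and then quoting the uncolored result---is the reduction the paper is tacitly invoking, and is the shortest route if one does not need the explicit graph description. Your direct argument has the advantage, as you note, of making the decorated-graph representatives concrete, which is exactly how $\sfF^\frakC(E)$ is used in Sections~\ref{sec:minimal}--\ref{sec:higherbracketlie}.
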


In particular, elements in the free $\frakC$-colored PROP $\sfF^\frakC(E)$ can be written as sums of $E$-decorated $\frakC$-colored directed graphs.

\begin{convention}
From now on, everything will be tacitly assumed to be $\frakC$-colored with a
suitable set of colors $\frakC$.  When there is no danger of ambiguity, we will, for brevity, suppress $\frakC$ from the notation.
\end{convention}

\begin{example}[Morphisms]
\label{ex:morphisms}
Let $\sfP$ be an ordinary PROP (i.e., a $1$-colored PROP).  Then there is a $2$-colored PROP $\sfP_{\texttt{B} \to \texttt{W}}$ whose algebras are of the form $f \colon U \to V$, in which $U$ and $V$ are $\sfP$-algebras and $f$ is a morphism of $\sfP$-algebras \cite[Example 1]{markl02}.  It can be constructed as the quotient
\[
%\label{eq:Pmorphism}
\sfP_{\texttt{B} \to \texttt{W}} =
\frac{\sfP_{\texttt{B}} \ast \sfP_{\texttt{W}} \ast
  \sfF(f)}{\left(f^{\otimes m} x_{\texttt{B}} = x_{\texttt{W}}
  f^{\otimes n} \text{ for all } x \in
\sfP(m,n)\right)},
\]
where $\sfP_{\texttt{B}}$ and $\sfP_{\texttt{W}}$ are copies of $\sfP$
concentrated in the colors $\texttt{B}$ and $\texttt{W}$,
respectively, $x_{\texttt{B}}$ and $x_{\texttt{W}}$ are the respective
copies of $x$ in $\sfP_{\texttt{B}}$ and $\sfP_{\texttt{W}}$,
and $\sfF(f)$ is the free $2$-colored PROP on the generator $f \colon
\texttt{B} \to \texttt{W}$.  The star $\ast$ denotes the free product
($=$ the coproduct) of $2$-colored PROPs.

In the case that $\sfP$ is the operad $\As$ for associative algebras,
cohomology of $\As_{\texttt{B} \to \texttt{W}}$-algebras (i.e.,
associative algebra morphisms) will be discussed in details in
Sections \ref{sec:algmor} and \ref{sec:higherbracket}.
\end{example}

\begin{example}[Modules]
\label{ex:modules}
There is a $2$-colored operad $\AsMod$ whose algebras are of the form $(A,M)$, where $A$ is an associative algebra and $M$ is a left $A$-module.  It can be constructed as the quotient
\[
%\label{eq:AsMod}
\AsMod = \frac{\sfF(\mu,\lambda)}{\left(\mu(\mu \otimes 1_\texttt{A}) - \mu(1_\texttt{A} \otimes \mu), \lambda(\mu \otimes 1_\texttt{M}) - \lambda(1_\texttt{A} \otimes \lambda)\right)}.
\]
Here $\sfF(\mu,\lambda)$ is the free $2$-colored operad (with $\frakC = \lbrace \texttt{A}, \texttt{M} \rbrace$) on the generators,
\[
\mu \in \sfF(\mu,\lambda)\binom{\texttt{A}}{\texttt{A},\texttt{A}} \quad \text{and} \quad
\lambda \in \sfF(\mu,\lambda)\binom{\texttt{M}}{\texttt{A},\texttt{M}},
\]
which encode the multiplication in $A$ and the left $A$-action on $M$,
respectively.

If we depict the multiplication $\mu$ as $\muu$ and the module
action $\lambda$ as $\lambdaa$, then the associativity of $\mu$ is
expressed by the diagram
\begin{equation}
\label{mm1}
\ZZbbZbZ = \ZbZbbZZ
\end{equation}
and the compatibility between the multiplication and the module action by
\begin{equation}
\label{mm2}
\BfZZbbZbZ = \BfZbZbbZZ\ .
\end{equation}
The diagrams in the above two displays should be interpreted as
elements of the free colored PROP $\sfF(\mu,\lambda)$, with the
$\texttt{A}$-colored edges of the underlying graph represented by
simple lines $\hskip .2em \thicklines\unitlength
1em\thicklines\line(0,1){1}\hskip .2em$, and the $\texttt{M}$-colored edges by
the double lines $\hskip .2em \thicklines\unitlength
1em\line(0,1){1}\hskip .15em \line(0,1){1}\hskip .2em$. We use the
convention that the directed edges point upwards, i.e.~the composition
is performed from the bottom up.
\end{example}

\begin{example}[Module-algebras]
\label{ex:modalg}
Let $H = (H, \mu_H, \Delta_H)$ be a (co)associative bialgebra.  An $H$-\emph{module-algebra} is an associative algebra $(A,\mu_A)$ that is equipped with a left $H$-module structure such that the multiplication map on $A$ becomes an $H$-module morphism.  In other words, the \emph{module-algebra axiom}
\[
%\label{eq:modalgaxiom}
x(ab) = \sum_{(x)} \,(x_{(1)}a)(x_{(2)}b)
\]
holds for $x \in H$ and $a, b \in A$, where $\Delta_H(x) = \sum_{(x)} x_{(1)} \otimes x_{(2)}$ using the Sweedler's notation for comultiplication.

This algebraic structure arises often in algebraic topology \cite{boardman}, quantum groups \cite{kassel}, Lie and Hopf algebras theory \cite{d,mont,sweedler}, and group representations \cite{abe}.  For example, in algebraic topology, the complex cobordism ring $\mathrm{MU}^*(X)$ of a topological space $X$ is an $S$-module-algebra, where $S$ is the Landweber-Novikov algebra \cite{landweber,novikov} of stable cobordism operations.

Another important example of a module-algebra arises in the theory of Lie algebras.  Finite dimensional simple $sl(2, \mathbb{C})$-modules are, up to isomorphism, the highest weight modules $V(n)$ $(n \geq 0)$ \cite[Theorem 7.2]{humphreys}.  There is a $U(sl(2,\mathbb{C}))$-module-algebra structure on the polynomial algebra $\mathbb{C} \lbrack x,y \rbrack$ such that the submodule $\mathbb{C} \lbrack x,y \rbrack_n$ of homogeneous polynomials of degree $n$ is isomorphic to the highest weight module $V(n)$ \cite[Theorem V.6.4]{kassel}.  In other words, all the finite dimensional simple $sl(2, \mathbb{C})$-modules can be encoded inside a single $U(sl(2,\mathbb{C}))$-module-algebra.
%Likewise, the singular mod $p$ cohomology ring $\mathrm{H}^*(X; \mathbb{Z}/p)$ of a topological space $X$ is an $\mathcal{A}_p$-module-algebra, where $\mathcal{A}_p$ is the Steenrod algebra associated to the prime $p$ \cite{es,milnor}.

There is a $2$-colored PROP $\ModAlg$ whose algebras are of the form
$(H,A)$, where $H$ is a (co)associative bialgebra and $A$ is an
$H$-module-algebra.  (The third author first learned about this fact
from Bruno Vallette in private correspondence.)  It can be constructed
as the quotient (with $\frakC = \lbrace \texttt{H},\texttt{A} \rbrace$)

\begin{equation}
\label{eq:ModAlg}
\ModAlg = \sfF(\mu_\texttt{H}, \Delta_\texttt{H}, \mu_\texttt{A}, \lambda)/I,
\end{equation}
where $\sfF = \sfF(\mu_\texttt{H}, \Delta_\texttt{H}, \mu_\texttt{A}, \lambda)$ is the free $2$-colored PROP on the generators:
\[
\mu_\texttt{H} \in \sfF\binom{\texttt{H}}{\texttt{H},\texttt{H}}, \,
\Delta_\texttt{H} \in \sfF\binom{\texttt{H},\texttt{H}}{\texttt{H}}, \,
\mu_\texttt{A} \in \sfF\binom{\texttt{A}}{\texttt{A},\texttt{A}}, \, \text{and }
\lambda \in \sfF\binom{\texttt{A}}{\texttt{H},\texttt{A}},
\]
which encode the multiplication and comultiplication in $H$, the multiplication in $A$, and the $H$-module structure on $A$, respectively.  The ideal $I$ is generated by the elements:
\[
\begin{split}
& \mu_\texttt{H}(\mu_\texttt{H} \otimes 1_\texttt{H}) - \mu_\texttt{H}(1_\texttt{H} \otimes \mu_\texttt{H}) \quad (\text{associativity of } \mu_\texttt{H}), \\
& (\Delta_\texttt{H} \otimes 1_\texttt{H})\Delta_\texttt{H} - (1_\texttt{H} \otimes \Delta_\texttt{H})\Delta_\texttt{H}  \quad (\text{coassociativity of } \Delta_\texttt{H}), \\
& \Delta_\texttt{H} \mu_\texttt{H} -  \mu_\texttt{H}^{\otimes 2} (2 ~ 3) \Delta_\texttt{H}^{\otimes 2} \quad (\text{compatibility of } \mu_\texttt{H} \text{ and } \Delta_\texttt{H}), \\
& \mu_\texttt{A}(\mu_\texttt{A} \otimes 1_\texttt{A}) - \mu_\texttt{A}(1_\texttt{A} \otimes \mu_\texttt{A}) \quad (\text{associativity of } \mu_\texttt{A}), \\
& \lambda(\mu_\texttt{H} \otimes 1_\texttt{A}) - \lambda(1_\texttt{H} \otimes \lambda) \quad (H\text{-module axiom}), \\
& \lambda(1_\texttt{H} \otimes \mu_\texttt{A}) - \mu_\texttt{A} \lambda^{\otimes 2} (2 ~ 3) (\Delta_\texttt{H} \otimes 1_\texttt{A}^{\otimes 2}) \quad  (\text{module-algebra axiom}).
\end{split}
\]
Here $(2 ~ 3) \in \Sigma_4$ is the permutation that switches $2$ and $3$.

If we draw the multiplication $\mu_H$ as $\muu$, the comultiplication
$\Delta_H$ as $\Deltaa$, the multiplication $\mu_A$ as $\muufat$, and
the $H$-module action $\lambda$ as $\lambdaa$, then the
bialgebra axioms for $H$ are expressed by
\[
\ZZbbZbZ = \ZbZbbZZ,\
\CCbbCbC = \CbCbbCC\
\mbox { and }\
\dvojiteypsilon = \hskip .4em \motylek \hskip .3em,
\]
The associativity of $\mu_H$ is given by the obvious $\hskip .2em
\thicklines\unitlength 1em\line(0,1){1}\hskip .15em \line(0,1){1}\hskip
.2em$-colored version of~(\ref{mm1}), the $H$-module axiom
by~(\ref{mm2}), and the module-algebra axiom by
\[
\BfZbZbbZZa = \hskip .3em \GerdaI \hskip 1em .
\]

Variants of module-algebras, including, module-co/bialgebras and
comodule-(co/bi)algebras are algebras over similar $2$-colored PROPs.
Deformations, in the classical sense \cite{ger64}, of module-algebras
and its variants were studied in \cite{yau07,yau08}.
\end{example}

\begin{example}[Entwining structures]
\label{ex:entwining}
An \emph{entwining structure} \cite{brez01,bm} is a tuple
$(A, C, \psi)$, in which $A = (A, \mu)$ is an associative algebra, $C = (C, \Delta)$ is a coassociative coalgebra, and $\psi \colon C \otimes A \to A \otimes C$, such that the following two entwining axioms are satisfied:
\begin{equation}
\label{eq:entaxioms}
\begin{split}
\psi(\Id_C \otimes \mu)
&= (\mu \otimes \Id_C)(\Id_A \otimes \psi)(\psi \otimes \Id_A), \\
(\Id_A \otimes \Delta)\psi
&= (\psi \otimes \Id_C)(C \otimes \psi)(\Delta \otimes \Id_A).
\end{split}
\end{equation}
If we symbolize $\mu$ by $\muu$, $\Delta$ by $\Deltaafat$ and $\psi$ by
$\psii$, then the entwining axioms can be written as
\[
\raisebox{-1.2em}{\rule{0pt}{0pt}}
\GerdaIIinv = \GerdaIIIinv\  \mbox { and }\
\GerdaII = \GerdaIII \hskip .2em.
\]

This algebraic structure arises in the study of \emph{coalgebra-Galois extension} and its dual notion, \emph{algebra-Galois coextension} \cite{bh}, generalizing the Hopf-Galois extension of \cite{kt}.

There is a $2$-colored PROP $\Ent$ whose algebras are entwining structures.  It can be constructed as the quotient
\[
%\begin{equation}
%\label{eq:ent}
\Ent = \sfF(\mu,\Delta,\psi)/I
%\end{equation}
\]
of the free $2$-colored PROP $\sfF = \sfF(\mu,\Delta,\psi)$ (with $\frakC = \lbrace \texttt{A},\texttt{C} \rbrace$) on the generators:
\[
\mu \in \sfF\binom{\texttt{A}}{\texttt{A},\texttt{A}}, \quad
\Delta \in \sfF\binom{\texttt{C},\texttt{C}}{\texttt{C}}, \quad \text{and} \quad
\psi \in \sfF\binom{\texttt{A},\texttt{C}}{\texttt{C},\texttt{A}}.
\]
The ideal $I$ is generated by the elements expressing the associativity of $\mu$, the coassociativity of $\Delta$, and the two entwining axioms \eqref{eq:entaxioms}.
\end{example}

\begin{example}[Yetter-Drinfel'd modules]
\label{ex:yetter}
A \emph{Yetter-Drinfel'd module} \cite{yetter} (a.k.a.\ \emph{crossed bimodule} and \emph{quantum Yang-Baxter module}) over a (co)associative bialgebra $(H, \mu, \Delta)$ is a vector space $M$ together with a left $H$-module action $\omega \colon H \otimes M \to M$ and a right $H$-comodule coaction $\rho \colon M \to M \otimes H$ that satisfy the Yetter-Drinfel'd condition,
   \begin{equation}
   \label{eq:YDaxiom}
   (\Id_M \otimes \mu) \circ (\rho \otimes \Id_H) \circ \tau \circ (\Id_H \otimes \omega) \circ (\Delta \otimes \Id_M)
   =\, (\omega \otimes \mu) \circ (\Id_H \otimes \tau \otimes \Id_H) \circ (\Delta \otimes \rho),
   \end{equation}
where $\tau$ is the twist isomorphism $H \otimes M \cong M \otimes
H$. If we depict $\mu$ as $\muu$, $\Delta$ as $\Deltaa$, $\omega$ as
$\lambdaa$, and $\rho$ as $\rhoo$, then
\[
\lii = \motylekfat \hskip .3em.
\raisebox{-2em}{\rule{0pt}{0pt}}
\]

Yetter-Drinfel'd modules were introduced by Yetter \cite{yetter}, and
are studied further in \cite{lr,ps,radford,rt,sch}, among others.  If
the bialgebra $H$ is a finite dimensional Hopf algebra, then the
left-modules over its Drinfel'd double $D(H)$ are exactly the
Yetter-Drinfel'd modules over $H$.  These objects play important roles
in the theory of quantum groups and mathematical physics.  Indeed, a
finite dimensional Yetter-Drinfel'd module $M$ gives rise to a
solution of the quantum Yang-Baxter equation \cite{lr,radford} (i.e.,
an $R$-matrix \cite[Chapter VIII]{kassel}).  Conversely, through the
so-called \emph{FRT construction} \cite{frt,kassel}, every $R$-matrix
on a finite dimensional vector space gives rise to a Yetter-Drinfel'd
module over some bialgebra.  Cohomology for Yetter-Drinfel'd modules
and their morphisms over a fixed bialgebra have been studied in
\cite{ps} and \cite{yau08b}, respectively.

There is a $2$-colored PROP $\YD$ whose algebras are of the form $(H,M)$, where $H$ is a bialgebra and $M$ is a Yetter-Drinfel'd module over $H$.  It can be constructed as the quotient
\[
%\begin{equation}
%label{eq:YD}
\YD = \sfF(\mu, \Delta, \omega, \rho)/I
%\end{equation}
\]
of the free $2$-colored PROP $\sfF = \sfF(\mu, \Delta, \omega, \rho)$ (with $\frakC = \lbrace \texttt{H},\texttt{M} \rbrace$) on the generators:
\[
\mu \in \sfF\binom{\texttt{H}}{\texttt{H},\texttt{H}}, ~
\Delta \in \sfF\binom{\texttt{H},\texttt{H}}{\texttt{H}}, ~
\omega \in \sfF\binom{\texttt{M}}{\texttt{H},\texttt{M}}, ~ \text{and} ~
\rho \in \sfF\binom{\texttt{M},\texttt{H}}{\texttt{M}}.
\]
The ideal $I$ is generated by elements expressing the bialgebra axioms for $\mu$ and $\Delta$, the left $H$-module axiom for $\omega$, the right $H$-comodule axiom for $\rho$, and the Yetter-Drinfel'd condition \eqref{eq:YDaxiom}.
\end{example}

\begin{example}[Hopf modules]
\label{ex:hopf}
A \emph{Hopf module} over a bialgebra $(H,\mu,\Delta)$ is a vector space $M$ together with a left $H$-module action $\omega \colon H \otimes M \to M$ and a right $H$-comodule coaction $\rho \colon M \to M \otimes H$ that satisfy the Hopf module condition:
\begin{equation}
\label{eq:hopfmod}
\rho \circ \omega \,=\, (\omega \otimes \mu) \circ (\Id_H \otimes \tau \otimes \Id_H) \circ (\Delta \otimes \rho).
\end{equation}
There is a $2$-colored PROP $\HopfMod$ whose algebras are of the form $(H,M)$, in which $H$ is a bialgebra and $M$ is a Hopf module over $H$.  It admits the same construction as $\YD$, except that the Yetter-Drinfel'd condition \eqref{eq:YDaxiom} is replaced by the Hopf module condition \eqref{eq:hopfmod} in the ideal $I$ of relations.
\end{example}

%%%%%%%%%%%%%%%%%%%%%
%%
%%  Minimal models
%%
%%%%%%%%%%%%%%%%%%%%%
\section{Minimal models and cohomology}
\label{sec:minimal}

In this section, we define (i) minimal models for colored PROPs and
(ii) cohomology for algebras over a colored PROP based on minimal
models. Since minimal models are (at least in most cases) known to be
unique up to isomorphism, the cohomology based on minimal models is
unique already on the chain level. We will
therefore require minimality whenever possible, though an arbitrary
cofibrant resolution should give the same cohomology. There are,
however, important PROPs that do not have a minimal model, as the
colored operad $\Iso$ considered in Section~\ref{MAGDA}.

First we recall the notions of modules and derivations for colored PROPs.

%%%%%%%%%%%%%%%%%%%%%%
\subsection{Modules}
\label{subsec:modules}

For a $\frakC$-colored PROP $\sfP$ and a $\frakC$-colored $\Sigma$-bimodule $U$, a \emph{$\sfP$-module} structure on $U$ \cite[p.203]{markl96b} consists of the following operations:
\[
%\begin{equation}
\begin{split}
\circ = \circ_l & \colon \sfP\binom{d_1, \ldots , d_m}{c_1, \ldots , c_n} \otimes U\binom{b_1, \ldots , b_n}{a_1, \ldots , a_k} \to U\binom{d_1, \ldots , d_m}{a_1, \ldots , a_k}, \\
\circ = \circ_r & \colon U\binom{d_1, \ldots , d_m}{c_1, \ldots , c_n} \otimes \sfP\binom{b_1, \ldots , b_n}{a_1, \ldots , a_k} \to U\binom{d_1, \ldots , d_m}{a_1, \ldots , a_k}, \\
\otimes = \otimes_l & \colon \sfP\binom{d_1, \ldots , d_{m_1}}{c_1, \ldots , c_{n_1}} \otimes U\binom{b_1, \ldots , b_{m_2}}{a_1, \ldots , a_{n_2}} \to U\binom{d_1, \ldots , d_{m_1}, b_1, \ldots , b_{m_2}}{c_1, \ldots , c_{n_1}, a_1, \ldots , a_{n_2}}, \\
\otimes = \otimes_r & \colon U\binom{d_1, \ldots , d_{m_1}}{c_1, \ldots , c_{n_1}} \otimes \sfP\binom{b_1, \ldots , b_{m_2}}{a_1, \ldots , a_{n_2}} \to U\binom{d_1, \ldots , d_{m_1}, b_1, \ldots , b_{m_2}}{c_1, \ldots , c_{n_1}, a_1, \ldots , a_{n_2}}.
\end{split}
%\end{equation}
\]
As usual, the vertical operations $\circ_l$ and $\circ_r$ are trivial unless $b_i = c_i$ for $1 \leq i \leq n$.  The following compatibility axioms are also imposed on the four operations:
\[
%\begin{equation}
%\label{eq:moduleaxioms}
\begin{split}
f \circ (g \circ h) &= (f \circ g) \circ h, \\
f \otimes (g \otimes h) &= (f \otimes g) \otimes h, \\
(f_1 \circ f_2) \otimes (g_1 \circ g_2) &= (f_1 \otimes g_1) \circ (f_2 \otimes g_2).
\end{split}
%\end{equation}
\]
Here exactly one of $f, g$, and $h$ lies in $U$ and the other two lie in $\sfP$.  Likewise, exactly one of $f_1$, $f_2$, $g_1$, and $g_2$ lies in $U$ and the other three lie in $\sfP$.

We note that $\sfP$-modules can also be defined as
abelian group objects in the category ${\tt PROP}/\sfP$ of $\frakC$-colored PROPs over
$\sfP$.

For example, if $\beta \colon \sfP \to \mathsf{Q}$ is a morphism of $\frakC$-colored PROPs, then $\mathsf{Q}$ becomes a $\sfP$-module via $\beta$ in the obvious way.

%%%%%%%%%%%%%%%%%%%%%%%%
\subsection{Derivations}
\label{subsec:der}

Given a $\sfP$-module $U$, a \emph{derivation} $P \to U$ is a $\frakC$-colored $\Sigma$-bimodule morphism $d \colon P \to U$ that satisfies the usual derivation property with respect to both the vertical operations $\circ$ and the horizontal operations $\otimes$ \cite[p.204]{markl96b}.  Denote by $\Der(\sfP,U)$ the vector space of derivations $\sfP \to U$.  %If both $\sfP$ and $U$ are $\mathbb{Z}$-graded, then a \emph{degree $k$ derivation} $P \to U$ is defined similarly for an integer $k$.

\begin{proposition}[= $\frakC$-colored version of Proposition 3 in \cite{markl07b}]
\label{prop:Der}
Let $U$ be an $\sfF^\frakC(E)$-module for some $\frakC$-colored $\Sigma$-bimodule $E$.  Then there is a canonical isomorphism
\begin{equation}
\label{eq:Deriso}
\Der(\sfF^\frakC(E), U) \cong \Hom_\Sigma^\frakC(E,U),
\end{equation}
where $\Hom_\Sigma^\frakC(E,U)$ denotes the vector space of $\frakC$-colored $\Sigma$-bimodule morphisms $E \to U$.
\end{proposition}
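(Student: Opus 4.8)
The plan is to make the isomorphism \eqref{eq:Deriso} explicit by producing a restriction map one way and an extension-by-Leibniz map the other way, and checking they are mutually inverse. First I would define the restriction map
\[
\Phi \colon \Der(\sfF(E), U) \longrightarrow \Hom_\Sigma^\frakC(E,U), \qquad \Phi(d) = d|_E,
\]
where $E$ is identified with the $\frakC$-colored $\Sigma$-bimodule of generators $E \cong \sfF_1(E) \subseteq \sfF(E)$, i.e.\ the one-vertex decorated graphs. Since a derivation is in particular a morphism of $\frakC$-colored $\Sigma$-bimodules, its restriction to $E$ is again such a morphism, so $\Phi$ is well-defined. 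For the inverse, given $\phi \in \Hom_\Sigma^\frakC(E,U)$ I would define $\Psi(\phi)$ on a monomial $\Gamma = \otimes_v e_v \in E(G,\zeta)$ by the vertex-by-vertex formula
\[
\Psi(\phi)(\Gamma) = \sum_{v \in Vert(G)} \Gamma^\phi_v,
\]
where $\Gamma^\phi_v \in U$ is obtained by replacing the decoration $e_v$ at the vertex $v$ with $\phi(e_v) \in U$ and evaluating the resulting mixed graph — one $U$-decorated vertex, all others $E$-decorated — by means of the four $\sfF(E)$-module operations $\circ_l, \circ_r, \otimes_l, \otimes_r$ of Section~\ref{subsec:modules}, applied in the grafting/disjoint-union order prescribed by $(G,\zeta)$.

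Next I would check that $\Psi(\phi)$ descends to the colimit defining $\sfF(E)$. A color-respecting isomorphism of decorated graphs permutes the vertices while matching their decorations, and the sum over $Vert(G)$ is manifestly invariant under such permutations; hence $\Psi(\phi)$ is well-defined on $\sfF(E)$ and visibly $\frakC$-colored bi-equivariant. Then I would verify that $\Psi(\phi)$ is a derivation for both operations. Since the vertex set of a vertical composite $x \circ y$ (respectively a horizontal composite $x \otimes y$) is the disjoint union $Vert(x) \sqcup Vert(y)$, the defining sum splits into the terms where the substituted vertex lies in $x$ and those where it lies in $y$; the associativity and interchange axioms of the module structure then reassemble these two groups exactly as $\Psi(\phi)(x) \circ y + x \circ \Psi(\phi)(y)$, and likewise for $\otimes$, which is the Leibniz rule.

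Finally I would check that $\Phi$ and $\Psi$ are mutually inverse. Evaluating $\Psi(\phi)$ on a one-vertex graph decorated by $e$ yields the single term $\phi(e)$, so $\Phi\Psi = \Id$. Conversely, for a derivation $d$ the map $\Psi(\Phi(d))$ is a derivation agreeing with $d$ on the generators $E$; since $\sfF_k(E)$ for $k \geq 2$ is spanned by vertical and horizontal composites of elements with strictly fewer decorating factors, an induction on the grading $k$ using the Leibniz rule shows that two derivations agreeing on $E$ coincide on all of $\sfF(E)$, whence $\Psi\Phi = \Id$.

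I expect the main obstacle to be the well-definedness and the derivation property of $\Psi(\phi)$: one must confirm that the vertex-by-vertex evaluation is independent of the chosen presentation of an element of $\sfF(E)$ as a decorated graph and interacts correctly with grafting and disjoint union. This is precisely where the compatibility axioms of the $\sfF(E)$-module structure on $U$ enter, and it is the $\frakC$-colored refinement of the corresponding step in the proof of Proposition~3 of \cite{markl07b}.
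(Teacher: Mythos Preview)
Your proposal is correct and follows precisely the approach sketched in the paper: the paper does not give a formal proof but only describes the two mutually inverse maps as restriction $\theta \mapsto \theta|_E$ and the unique derivation extension $\varphi \mapsto Ex(\varphi)$, which is exactly your $\Phi$ and $\Psi$. Your write-up is simply a fleshed-out version of that sketch, with the well-definedness and Leibniz checks made explicit.
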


In one direction, the isomorphism \eqref{eq:Deriso} takes a derivation $\theta \in \Der(\sfF^\frakC(E), U)$ to its restriction $\theta \vert_E$ to the space $E$ of generators. In the other direction, it takes a map $\varphi \colon E \to U \in \Hom_\Sigma^\frakC(E,U)$ to its unique extension $Ex(\varphi) \colon \sfF^\frakC(E) \to U$ as a derivation such that $Ex(\varphi) \vert_E = \varphi$.

Following \cite{markl96,markl06}, we make the following definition.

\begin{definition}
\label{def:minimal}
Let $\sfP$ be a $\frakC$-colored PROP.   A \emph{minimal model} of $\sfP$ is a differential graded $\frakC$-colored PROP $(\sfF^\frakC(E), \partial)$ for some $\frakC$-colored $\Sigma$-bimodule $E$ together with a homology isomorphism
\[
\rho \colon (\sfF^\frakC(E), \partial) \to (\sfP, 0)
\]
such that the following \emph{minimality condition} is satisfied:
\[
%\begin{equation}
%\label{eq:minimality}
\partial(E) \subseteq \bigoplus_{k \geq 2} \sfF^\frakC_k(E).
%\end{equation}
\]
In other words, the image of $E$ under $\partial$ consists of decomposables.
\end{definition}

%Minimal models that are unique up to isomorphism are known to exist for:
%\begin{enumerate}
%\item a differential graded operad $(\sfP, d_{\sfP})$ with $\sfP(1) = \bk$ or $H_*(\sfP(1), d_{\sfP}) = \bk$ \cite{markl96,markl06}.
%\item a dioperad \cite{gan}.
%\item a properad \cite{mv,vallette}.
%item the $2$-colored operad $\sfP_{\bullet \to \bullet}$, where $(\sfP, d_{\sfP})$ is a differential graded operad with $\sfP(1) = \bk$ or $H_*(\sfP(1), d_{\sfP}) = \bk$ \cite{markl02}.
%\end{enumerate}

%According to \cite[Section 8]{markl06}, not every PROP admits a minimal model.  And even if it admits a minimal model, it may not be unique up to isomorphism.

%%%%%%%%%%%%%%%%%%%%%%%%%%
\subsection{Cohomology}
\label{subsec:cohomology}

Here we define cohomology of an algebra over a colored PROP following \cite{markl96b,markl07b}.

Let $\sfP$ be a $\frakC$-colored PROP, and let $(\sfF^\frakC(E), \partial) \xrightarrow{\rho} (\sfP, 0)$ be a minimal model of $\sfP$.  Let $\sfP \xrightarrow{\alpha} \End^\frakC_T$ be a $\sfP$-algebra structure on $T = \oplus_{c \in \frakC} T_c$.  Consider $\End^\frakC_T$ as an $\sfF^\frakC(E)$-module via the morphism
\[
\beta = \alpha \rho \colon \sfF^\frakC(E) \to \End^\frakC_T.
\]
Then the map
\[
\begin{split}
\Der(\sfF^\frakC(E), \End^\frakC_T) & \xrightarrow{\delta} \Der(\sfF^\frakC(E), \End^\frakC_T) \\
\theta & \mapsto \theta \partial
\end{split}
\]
is well-defined and is a differential $(\delta^2 = 0)$ because $\partial^2 = 0$.

\begin{definition}
\label{def:defcomplex}
In the above setting, define the cochain complex
\begin{equation}
\label{eq:CP}
C^*_\sfP(T;T) ~=~ \uparrow \Der(\sfF^\frakC(E), \End^\frakC_T)^{-*},
\end{equation}
where the degree $+1$ differential $\delta_\sfP$ is induced by $\delta$, $\uparrow$ denotes suspension, and $-*$ denotes reversed grading.  We call $(C^*_\sfP(T;T), \delta_\sfP)$ the \emph{deformation complex of $T$}.  Its cohomology,
\[
%\begin{equation}
%\label{eq:HP}
H^*_\sfP(T;T) = H(C^*_\sfP(T;T), \delta_P),
%\end{equation}
\]
is called the \emph{cohomology of $T$ with coefficients in itself}.
\end{definition}

Note that if $\frakC = \lbrace * \rbrace$, i.e., $\sfP$ is an ordinary ($1$-colored) PROP, then $(C^*_\sfP(T;T), \delta_\sfP)$ and $H^*_\sfP(T;T)$ defined above coincide with the definitions in \cite{markl96b,markl07b}.

%%%%%%%%%%%%%%%%%%%%%%%%%%%
%%
%%  L-infinity structure
%%
%%%%%%%%%%%%%%%%%%%%%%%%%%%
\section{$L_\infty$-structure on $C^*_\sfP(T;T)$ and deformations}
\label{sec:Linfty}

In this section, we observe that the deformation complex $(C^*_\sfP(T;T), \delta_\sfP)$ (Definition \ref{def:defcomplex}) of an algebra $T$ over a colored PROP $\sfP$ has the natural structure of an $L_\infty$-algebra (Theorem \ref{thm:Linfinity}).  The relationship between this $L_\infty$-algebra and deformations of $T$ is discussed in Section \ref{subsec:deformations}.   An explicit construction of the $L_\infty$-operations $l_k$ in $(C^*_\sfP(T;T), \delta_\sfP)$ is given in Section \ref{subsec:lk}.  This construction will first be applied in Sections \ref{sec:algmor} and \ref{sec:higherbracket} to obtain very explicit formulas for the operations $l_k$ in the deformation complex of an associative algebra morphism.

First we recall the notion of an $L_\infty$-algebra.

\begin{definition}[Definition 2.1 in \cite{ladamarkl95}, Example 3.90 in \cite{mss}]
An \emph{$L_\infty$-structure} on a $\mathbb{Z}$-graded module $V$ consists of a sequence of operations $(\delta = l_1, l_2, l_3, \ldots)$ with
\[
l_n \colon V^{\otimes n} \to V
\]
of degree $2-n$ such that each $l_n$ is anti-symmetric and the condition
\begin{equation}
\label{eq:ln}
\sum_{i+j=n+1} \sum_\sigma \chi(\sigma) (-1)^{i(j-1)} l_j\left(l_i(x_{\sigma(1)}, \ldots , x_{\sigma(i)}), x_{\sigma(i+1)}, \ldots , x_{\sigma(n)}\right) = 0
\end{equation}
holds for $n \geq 1$.  Here $\sigma$ runs through all the $(i,n-i)$-unshuffles for $i \geq 1$, and
\[
\chi(\sigma) = sgn(\sigma) \cdot \varepsilon(\sigma; x_1, \ldots , x_n),
\]
where $\varepsilon(\sigma; x_1, \ldots , x_n)$ is the Koszul sign given by
\[
x_1 \wedge \cdots \wedge x_n = \varepsilon(\sigma; x_1, \ldots , x_n) \cdot x_{\sigma(1)} \wedge \cdots \wedge x_{\sigma(n)}.
\]
In this case, we call $(V, \delta, l_2, l_3, \ldots)$ an \emph{$L_\infty$-algebra}.  The anti-symmetry of $l_n$ means that
\[
l_n(x_{\sigma(1)}, \ldots , x_{\sigma(n)}) = \chi(\sigma)l_n(x_1, \ldots , x_n)
\]
for $\sigma \in \Sigma_n$ and $x_1, \ldots , x_n \in V$.
\end{definition}

\begin{theorem}
\label{thm:Linfinity}
In the setting of \S \ref{subsec:cohomology},
there exists an $L_\infty$-structure $(\delta_\sfP, l_2, l_3, \ldots)$
on $C^*_\sfP(T;T)$ capturing deformations of colored $\sfP$-algebras
in the sense of~\ref{subsec:deformations} below.
This $L_\infty$-structure induces a graded Lie
algebra structure on $H^*_\sfP(T;T)$.
\end{theorem}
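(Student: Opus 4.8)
The plan is to realize the operations $(\delta_\sfP, l_2, l_3, \dots)$ as the \emph{twist}, by a distinguished Maurer--Cartan element, of a more primitive $L_\infty$-structure attached directly to the differential $\partial$ of the minimal model, so that the entire list of $L_\infty$-axioms \eqref{eq:ln} becomes a bookkeeping consequence of the single identity $\partial^2 = 0$. First I would fix the identification coming from Proposition~\ref{prop:Der}: restriction to generators gives
\[
C^*_\sfP(T;T) \;\cong\; \uparrow \Hom_\Sigma^\frakC(E, \End_T)^{-*},
\]
so it suffices to equip $V := \Hom_\Sigma^\frakC(E,\End_T)$ with an $L_\infty$-structure carrying the suspension and reversed grading of Definition~\ref{def:defcomplex}. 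I decompose the differential by word length, $\partial = \sum_{k\ge 2}\partial_k$ with $\partial_k \colon E \to \sfF^\frakC_k(E)$; the minimality condition of Definition~\ref{def:minimal} is exactly what guarantees there is no linear ($k=1$) term. I then define \emph{raw} operations $l^{\mathrm{raw}}_k \colon V^{\otimes k}\to V$ by the graph substitution of Section~\ref{subsec:lk}: for $\varphi_1,\dots,\varphi_k\in V$ and $e\in E$, each graph occurring in $\partial_k e$ has exactly $k$ vertices, which I decorate by the $\varphi_i$ in all anti-symmetrised orders and then contract using the horizontal and vertical compositions of the PROP $\End_T$. The Koszul rule together with the suspension dictates the signs and makes $l^{\mathrm{raw}}_k$ antisymmetric of degree $2-k$.

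The heart of the argument is the verification of the raw $L_\infty$-relations from $\partial^2=0$. The point is a clean combinatorial dictionary: since $\partial$ acts as a derivation and the PROP composition in $\End_T$ distributes over vertices in the same way, the nested composite $l^{\mathrm{raw}}_j\big(l^{\mathrm{raw}}_i(-,\dots,-),-,\dots,-\big)$ applied to a generator $e$ amounts to taking $\partial_j e$, selecting one vertex, and re-expanding its $E$-decoration by $\partial_i$ — that is, to a single term of $\partial_i\partial_j$ restricted to the appropriate word length. A graph with $n$ vertices is produced precisely when $i+j=n+1$, so summing over $i+j=n+1$ and over the chosen vertex reassembles $(\partial^2)$ in word length $n$, fed into the composition of $\End_T$. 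Because $\partial^2=0$, the left-hand side of \eqref{eq:ln} vanishes identically, and the signs $(-1)^{i(j-1)}\chi(\sigma)$ are exactly those produced by reordering the inputs past the suspension. The main obstacle is here: matching these Koszul and suspension signs term by term, and correctly handling that $\End_T$ has \emph{two} compatible compositions (so the interchange axiom of Definition~\ref{def:coloredPROP} must be invoked when two substituted vertices are neither vertically nor horizontally adjacent). I expect this sign and interchange bookkeeping to be the only genuinely delicate part; everything else is formal.

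Next I would identify the base point. The composite $\beta=\alpha\rho$ satisfies $\beta\partial=\alpha\rho\partial=0$ because $\rho\colon(\sfF^\frakC(E),\partial)\to(\sfP,0)$ is a morphism of dg PROPs; since $\beta$ is already a PROP morphism it equals its own extension, so $\sum_{k}\tfrac1{k!}l^{\mathrm{raw}}_k(\beta^{\otimes k})(e)=\beta(\partial e)=0$, i.e. $\beta$ is a Maurer--Cartan element of the raw structure. Twisting the raw $L_\infty$-algebra by $\beta$ then yields a new $L_\infty$-structure with
\[
l_k(\psi_1,\dots,\psi_k)=\sum_{j\ge 0}\tfrac1{j!}\,l^{\mathrm{raw}}_{k+j}(\beta^{\otimes j},\psi_1,\dots,\psi_k),
\]
where for a fixed argument $e$ the sum over $j$ is finite because $\partial e$ has finitely many terms, so no convergence issue arises and the standard twisting lemma applies verbatim. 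One checks that $l_1(\psi)(e)$ decorates one vertex of $\partial e$ by $\psi$ and the remaining vertices by $\beta$, which is precisely $(\theta\partial)(e)$ under the isomorphism of Proposition~\ref{prop:Der}; hence $l_1=\delta_\sfP$ and the twisted operations $(\delta_\sfP,l_2,l_3,\dots)$ are the asserted ones. Maurer--Cartan elements of this twisted structure are exactly the perturbations of $\beta$ that remain dg-PROP morphisms, i.e. the deformations of the $\sfP$-algebra $T$, and the resulting equation is the Quantum Master Equation of Section~\ref{subsec:deformations}; this establishes the deformation-theoretic claim essentially by construction.

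Finally, the induced bracket on cohomology is the standard consequence of the $L_\infty$-axioms. The case $n=2$ of \eqref{eq:ln} says that $\delta_\sfP$ is a graded derivation of $l_2$, so $l_2$ descends to a well-defined, graded antisymmetric operation on $H^*_\sfP(T;T)$; the case $n=3$ exhibits the Jacobiator of $l_2$ as $\delta_\sfP$-exact (the $l_3$ and $l_1$ terms being coboundaries on cohomology), so the descended bracket satisfies the graded Jacobi identity. Thus $H^*_\sfP(T;T)$ carries a graded Lie algebra structure, completing the proof of Theorem~\ref{thm:Linfinity}.
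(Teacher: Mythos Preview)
Your proposal is correct and follows essentially the same strategy as the paper. The paper itself does not give a self-contained argument: it records the explicit formula \eqref{eq:lkf} for the operations $l_k$ (already in ``twisted'' form, with the unselected vertices decorated by $\beta$) and defers the verification of the $L_\infty$-axioms to \cite{markl07b}, noting only that they are a consequence of $\partial^2=0$.

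The one organisational difference worth pointing out is that you factor the construction as a ``raw'' $L_\infty$-structure built purely from the word-length decomposition $\partial=\sum_{k\ge 2}\partial_k$, followed by twisting at the Maurer--Cartan element $\beta|_E$. The paper (and \cite{markl07b}) writes down the twisted operations directly. Your factorisation is cleaner conceptually: it separates the purely combinatorial part (the raw relations from $\partial^2=0$) from the dependence on the particular algebra $T$, and it makes the deformation-theoretic interpretation automatic via the standard twisting lemma. Unpacking your twist formula term by term indeed reproduces \eqref{eq:lkf} exactly, since the $1/j!$ cancels the $j!$ orderings of the $j$ vertices receiving identical copies of $\beta$. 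So the two presentations are equivalent, and your identification of the sign/interchange bookkeeping as the only delicate step is accurate.
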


\begin{proof}
This is the $\frakC$-colored version of \cite[Theorem 1]{markl07b}, whose proof, with some very minor modifications, applies to the $\frakC$-colored setting as well.  In fact, Sections 3 and 4 in \cite{markl07b} (which contain the proof of Theorem 1 in that paper) apply basically verbatim to the $\frakC$-colored setting.  An explicit ``graphical" construction of the operations $l_k$ will be given below (\S \ref{subsec:lk}).
\end{proof}

%%%%%%%%%%%%%%%%%%%%%%%%%%%%%%%%%%%%%%%%%%%%%%%%%%
\subsection{Deformations of colored PROP algebras}
\label{subsec:deformations}

Section 5 in \cite{markl07b} concerning deformations of algebras over a PROP also applies to the $\frakC$-colored setting without change.  In particular, deformations of an algebra $T$ over a $\frakC$-colored PROP $\sfP$ (i.e., $\sfF^\frakC(E)$-algebra structures on $T$) correspond to elements $\kappa \in C^1_\sfP(T;T)$ that satisfy the \emph{Quantum Master Equation} \cite[Eq.(4)]{markl07b}:
\begin{equation}
\label{eq:qme}
0 = \delta_\sfP(\kappa) + \frac{1}{2!}l_2(\kappa, \kappa) - \frac{1}{3!}l_3(\kappa, \kappa, \kappa) - \frac{1}{4!}l_4(\kappa, \kappa, \kappa, \kappa) + \cdots.
\end{equation}
In other words, the $L_\infty$-algebra
\[
(C^*_\sfP(T;T), \delta_\sfP, l_2, l_3, \ldots)
\]
in Theorem \ref{thm:Linfinity} controls the deformations of $T$ as a $\sfP$-algebra.  As explained in \cite[Introduction]{markl07b}, this $L_\infty$-algebra is an $L_\infty$-version of the Deligne groupoid \cite{hinich97,hinich04} governing deformations that are described by the usual \emph{Master Equation} (also known as the \emph{Maurer-Cartan Equation}):
\[
%\begin{equation}
%\label{eq:mce}
0 = d\kappa + \frac{1}{2} \lbrack \kappa, \kappa \rbrack.
%\end{equation}
\]
%\label{remark1}

When $\sfP$ is a properad \cite{vallette}, there is another approach to studying
 $L_\infty$-deformations of $\sfP$-algebras due to Merkulov and Vallette \cite{mv}.
   Their approach is based on a generalization of Van der Laan's \emph{homotopy (co)operads} \cite{van}
   to \emph{homotopy (co)properads}.  They show that the deformation complex $(C^*_\sfP(T;T), \delta_\sfP)$
   inherits a $L_\infty$-algebra structure from a homotopy properad  (Theorem 28 of  \cite{mv}).
     Vallette recently informed the third author in private correspondence that the paper \cite{mv} can also be
     extended to the colored setting.

%%%%%%%%%%%%%%%%%%%%%%%%%%%%%%%%%%%%%%%%%%%%%%%%%%%%%%%%%%%%%%%%%%%%
\subsection{Construction of the operations $l_k$ on $C^*_\sfP(T;T)$}
\label{subsec:lk}

Here we describe how the operations $l_k$ in Theorem \ref{thm:Linfinity} are constructed, again following \cite[Section 2]{markl07b} closely.

Suppose that $F_1, \ldots , F_k \in \Hom^\frakC_\Sigma(E,
\End^\frakC_T)$ and that $\Gamma \in E(G,\zeta)$ is an $E$-decorated
$\frakC$-colored directed $(m,n)$-graph \eqref{eq:E(G)} with
underlying $\frakC$-colored graph $(G,\zeta) \in \UGr^\frakC(m,n)$.
Let $v_1, \ldots , v_k \in Vert(G)$ be $k$ distinct vertices in $G$.
Consider the $\End^\frakC_T$-decorated $\frakC$-colored directed
$(m,n)$-graph
\[
%\begin{equation}
%\label{eq:EndVgraph}
\Gamma^{\lbrace v_1, \ldots , v_k \rbrace}_{\lbrace\beta\rbrace}
\lbrack F_1, \ldots , F_k \rbrack \in \End^\frakC_T(G,\zeta)
%\end{equation}
\]
obtained from $\Gamma$ by:
\begin{enumerate}
\item replacing the decoration $e_{v_i} \in E$ of the vertex $v_i$ by $F_i(e_{v_i}) \in \End^\frakC_T$ for $1 \leq i \leq k$, and
\item replacing the decoration $e_v \in E$ of any vertex $v \not\in \lbrace v_1, \ldots , v_k \rbrace$ by $\beta(e_v) = \alpha\rho(e_v)$.
\end{enumerate}
The graph $\Gamma^{\lbrace v_1, \ldots , v_k \rbrace}_{\lbrace\beta\rbrace}
\lbrack F_1, \ldots , F_k \rbrack$ is visualized in
Figure~\ref{za_chvili_volam_Jarce} which is a colored version of a
picture taken from~\cite{markl07b}.
\begin{figure}[t]
\begin{center}
{% Picture saved by xtexcad 2.4
\unitlength=1pt
\begin{picture}(170.00,170.00)(0.00,0.00)
\thicklines
%\put(0.00,80.00){\makebox(0.00,0.00){$\sum$}}
%\put(170.00,150.00){\makebox(0.00,0.00){$\Gamma$}}
\put(90.00,10.00){\makebox(0.00,0.00){$\cdots$}}
\put(90.00,160.00){\makebox(0.00,0.00){$\cdots$}}
\put(130.00,80.00){\makebox(0.00,0.00){$\hnida {\beta}$}}
\put(70.00,40.00){\makebox(0.00,0.00){$\hnida {\beta}$}}
\put(50.00,70.00){\makebox(0.00,0.00){$\hnida {\beta}$}}
\put(130.00,130.00){\makebox(0.00,0.00){$\hnida {F_k}$}}
\put(100.00,60.00){\makebox(0.00,0.00){$\hnida {F_2}$}}
\put(73.00,100.00){\makebox(0.00,0.00){$\hnida {F_3}$}}
\put(50.00,120.00){\makebox(0.00,0.00){$\hnida {F_1}$}}
\put(140.00,150.00){\vector(0,1){20.00}}
\put(60.00,150.00){\vector(0,1){20.00}}
\put(50.00,150.00){\vector(0,1){20.00}}
\put(140.00,0.00){\vector(0,1){20.00}}
\put(60.00,0.00){\vector(0,1){20.00}}
\put(50.00,0.00){\vector(0,1){20.00}}
\put(95.00,85.00){\oval(130.00,130.00)}
\end{picture}}
\end{center}
\caption{\label{za_chvili_volam_Jarce} 
The $\End^\frakC_T$-decorated graph $\Gamma_{\{\beta\}}^{\{\Rada
v1k\}}[\Rada F1k]$. Vertices labelled $F_i$ are decorated by
$F_i(e_{v_i})$, $1 \leq i \leq k$, the remaining vertices are decorated
by $\beta(e_v)$.}
\end{figure}
Using the $\frakC$-colored PROP structure on $\End^\frakC_T$ (Example \ref{ex:EndPROP}), the graph $\Gamma^{\lbrace v_1, \ldots , v_k \rbrace}_{\lbrace\beta\rbrace} \lbrack F_1, \ldots , F_k \rbrack$ produces an element
\[
%\begin{equation}
%\label{eq:gamma}
\gamma\left(\Gamma^{\lbrace v_1, \ldots , v_k \rbrace}_{\lbrace\beta\rbrace} \lbrack F_1, \ldots , F_k \rbrack\right) \in \End^\frakC_T\binom{\zeta(l_{out}^1), \ldots , \zeta(l_{out}^m)}{\zeta(l_{in}^1), \ldots , \zeta(l_{in}^n)} \subseteq \End^\frakC_T(m,n).
%\end{equation}
\]
Here $\lbrace l_{out}^1, \ldots , l_{out}^m \rbrace$ and $\lbrace
l_{in}^1, \ldots , l_{in}^n \rbrace$ are the output and input legs,
respectively, of $(G,\zeta)$.

Now pick cochains $f_1, \ldots , f_k \in C^*_\sfP(T;T)$, which correspond to $F_1, \ldots , F_k \in \Hom^\frakC_\Sigma(E, \End^\frakC_T)$ under the isomorphism \eqref{eq:Deriso}:
\begin{equation}
\label{eq:caniso}
C^*_\sfP(T;T) = ~ \uparrow\Der(\sfF^\frakC(E), \End^\frakC_T)^{-*}
\cong~ \uparrow\Hom_\Sigma^\frakC(E,\End^\frakC_T)^{-*}.
\end{equation}
If $\xi \in E\binom{d_1, \ldots , d_m}{c_1, \ldots , c_n}$, then $\partial(\xi) \in \sfF^\frakC(E)\binom{d_1, \ldots , d_m}{c_1, \ldots , c_n}$ can be written as a finite sum
\[
%\begin{equation}
%\label{eq:dxi}
\partial(\xi) = \sum_{s \in S_\xi} \Gamma_s,
%\end{equation}
\]
where each $\Gamma_s \in E(G,\zeta)$ for some $(G,\zeta) \in \UGr^\frakC\binom{d_1, \ldots , d_m}{ c_1, \ldots , c_n}$.  Define
\[
l_k(f_1, \ldots , f_k)(\xi) \in \End^\frakC_T\binom{d_1, \ldots , d_m}{c_1, \ldots , c_n}
\]
to be the element
\begin{equation}
\label{eq:lkf}
l_k(f_1, \ldots , f_k)(\xi)
\buildrel \text{def} \over=
(-1)^{\nu(f_1, \ldots , f_k)} \sum_{s \in S_\xi} \sum_{(v_1, \ldots , v_k)} \gamma\left(\Gamma^{\lbrace v_1, \ldots , v_k \rbrace}_{s, \lbrace\beta\rbrace} \lbrack F_1, \ldots , F_k \rbrack\right),
\end{equation}
where $(v_1, \ldots , v_k)$ runs through all the $k$-tuples of distinct vertices in the underlying graph of $\Gamma_s$.  The sign on the right-hand side of \eqref{eq:lkf} is given by
\begin{equation}
\label{eq:nu}
\nu(f_1, \ldots , f_k)
\buildrel \text{def} \over=
(k-1)\vert f_1 \vert + (k-2)\vert f_2 \vert + \cdots + \vert f_{k-1} \vert.
\end{equation}
Since $\xi$ is arbitrary, \eqref{eq:lkf} specifies an element
\begin{equation}
\label{eq:lk'}
l_k(f_1, \ldots , f_k) \in \Hom_\Sigma^\frakC(E,\End^\frakC_T) \cong C^*_\sfP(T;T).
\end{equation}

The arguments in Sections 3-4 in \cite{markl07b} ensure that
\eqref{eq:lk'} is indeed well-defined.  We note that the $L_\infty$
axiom \eqref{eq:ln} for the operations $l_k$ constructed above is a
consequence of $\partial^2 = 0$.  Also, an obvious modification of the
above construction applies to free cofibrant, not necessarily minimal,
models as well.  We will see an instance of such a generalization in
Section \ref{MAGDA}.

%%%%%%%%%%%%%%%%%%%%%%%
%%
%%  Algebra morphism
%%
%%%%%%%%%%%%%%%%%%%%%%%
\section{Deformation complex of an associative algebra morphism}
\label{sec:algmor}

In this section and Section \ref{sec:higherbracket}, we illustrate the
$L_\infty$-deformation theory of colored PROP algebras (Section
\ref{sec:Linfty}) in the case of associative algebra morphisms.  Let
$g \colon U \to V$ be an associative algebra morphism, and set $T = U
\oplus V$ as a $2$-colored graded module.  Let $\As_{\texttt{B} \to \texttt{W}}$ denote
the $2$-colored operad encoding associative algebra morphisms (Example \ref{ex:morphisms}).  The morphism $g \colon U \to V$ can be regarded as an $\As_{\texttt{B} \to \texttt{W}}$-algebra structure on $T$.

The purposes of this section are (i)
 to express the differential $\delta_{\As_{\texttt{B} \to
 \texttt{W}}}$ in $C^*_{\As_{\texttt{B} \to \texttt{W}}}(T;T)$
 \eqref{eq:CP} in terms of the Hochschild differential (Theorem
 \ref{thm:deltatheta}), and (ii)
to observe that the cochain complex $(C^*_{\As_{\texttt{B} \to
 \texttt{W}}}(T;T), \delta_{\As_{\texttt{B} \to \texttt{W}}})$
is isomorphic to the Gerstenhaber-Schack cochain complex
 $(C^{*+1}_{GS}(g;g), d_{GS})$ of the morphism $g$
 \cite{gs83,gs85,gs88}
(Theorem \ref{thm:deltatheta}).
This isomorphism allows us to transfer the $L_\infty$-structure on
 $(C^*_{\As_{\texttt{B} \to \texttt{W}}}(T;T), \delta_{\As_{\texttt{B}
 \to \texttt{W}}})$ to the
Gerstenhaber-Schack cochain complex $(C^{*+1}_{GS}(g;g), d_{GS})$ (Corollary \ref{cor:LinfGS}).

The materials in this section and Section \ref{sec:higherbracket} can be easily dualized to obtain an explicit $L_\infty$-structure on the deformation complex of a morphism of coassociative coalgebras.  The associated deformation theory of coalgebra morphisms is the one constructed in \cite{yau}.

%%%%%%%%%%%%%%%%%%%%%%%
\subsection{Background}
\label{subsec:back}

Deformation of an associative algebra morphism $g$, in the classical sense of Gerstenhaber \cite{ger64}, was studied by Gerstenhaber and Schack in \cite{gs83,gs85,gs88}.  In the case of a single associative algebra $A$, the deformation complex is the Hochschild cochain complex $C^*(A;A)$ of $A$ with coefficients in itself, which has the structure of a differential graded Lie algebra \cite{ger63}.  On the other hand, the work of Gerstenhaber and Schack \cite{gs83,gs85,gs88} left open the question of what structure the deformation complex $(C^*_{GS}(g;g), d_{GS})$ of $g$ possesses.  Borisov answered this question in \cite{borisov} by showing that $(C^*_{GS}(g;g), d_{GS})$, while not a differential graded Lie algebra, is isomorphic to the underlying cochain complex of an $L_\infty$-algebra.

With our approach based on minimal models, we are able to write down all the $L_\infty$-operations $l_k$ on $C^*_{\As_{\texttt{B} \to \texttt{W}}}(T;T)$ explicitly (with $l_1$ in Theorem \ref{thm:deltatheta} and $l_k$ ($k \geq 2$) in Section \ref{sec:higherbracket}).  In particular, all the higher $l_k$ $(k \geq 3)$ can be written in terms of a certain generalized ``comp" operation \eqref{eq:circa'}, which extends the classical $\circ_i$ operation in the Hochschild cochain complex \cite{ger63}.  As far as we know, these higher operations $l_k$ have never been explicitly written down before.  We believe that this example of associative algebra morphisms will serve as a guide for obtaining explicit formulas for the $L_\infty$-operations in the deformation complexes of other kinds of morphisms and general diagrams.

%%%%%%%%%%%%%%%%%%%%%%%%%%%%%%%%%%%%%%%%%%%%%%%%%%%%%%%%%%%%
\subsection{The Gerstenhaber-Schack complex $(C^*_{GS}(g;g), d_{GS})$}
\label{subsec:CGS}

Here we recall the Gerstenhaber-Schack cochain complex $(C^*_{GS}(g;g), d_{GS})$ \cite{gs83,gs85,gs88}.

Fix a morphism $g \colon U \to V$ of associative algebras.  We also
consider $V$ as a $U$-bimodule via $g$.  Then
\begin{equation} \label{eq:CnGS} C^n_{GS}(g;g) \buildrel \text{def}
\over= \Hom(U^{\otimes n}, U) \oplus \Hom(V^{\otimes n}, V) \oplus
\Hom(U^{\otimes n-1}, V)
\end{equation}
for $n \geq 1$.  A typical element in $C^n_{GS}(g;g)$ is denoted by $(x_U,x_V,x_g)$ with $x_U \in \Hom(U^{\otimes n}, U)$, $x_V \in \Hom(V^{\otimes n}, V)$, and $x_g \in \Hom(U^{\otimes n-1}, V)$.  Its differential is defined as
\[
%\begin{equation}
%\label{eq:dGS}
d^n_{GS}(x_U, x_V, x_g) \buildrel \text{def} \over=
(bx_U, bx_V, gx_U - x_Vg^{\otimes n} - bx_g),
%\end{equation}
\]
where $b$ denotes the Hochschild differential in $\Hom(U^{\otimes *}, U)$, $\Hom(V^{\otimes *}, V)$, or $\Hom(U^{\otimes *},V)$.

%%%%%%%%%%%%%%%%%%%%%%%%%%%%%%%%%%%%%%%%%%%%%%%%%%%%%%%%%%%%%
\subsection{The minimal model of $\As_{\texttt{B} \to \texttt{W}}$}
\label{subsec:minimalAs}

Here we recall from \cite{markl02,markl04} the minimal model of the $2$-colored operad $\As_{\texttt{B} \to \texttt{W}}$ that encodes associative algebra morphisms.

The $2$-colored operad $\As_{\texttt{B}\to \texttt{W}}$ can be represented as (Example \ref{ex:morphisms})
\[
\As_{\texttt{B}\to \texttt{W}}
= \frac{\As_\texttt{B} \ast \As_\texttt{W} \ast \sfF(f)}{(f\mu = \nu f^{\otimes 2})},
\]
where $\mu$ and $\nu$ denote the generators in $\As_\texttt{B}(1,2)$ and $\As_\texttt{W}(1,2)$, respectively, which encode the multiplications in the domain and the target.

Let $E$ be the $2$-colored $\Sigma$-bimodule with the following generators:
\[
%\begin{equation}
%\label{eq:Egenerators}
\begin{split}
\mu_n & \colon \texttt{B}^{\otimes n} \to \texttt{B} \text{ of degree } n-2 \text{ and biarity } (1,n) ~(n \geq 2), \\
\nu_n &\colon \texttt{W}^{\otimes n} \to \texttt{W} \text{ of degree } n-2 \text{ and biarity } (1,n) ~(n \geq 2), \text{ and}\\
f_n & \colon \texttt{B}^{\otimes n} \to \texttt{W} \text{ of degree } n-1 \text{ and biarity } (1,n) ~(n \geq 1).
\end{split}
%\end{equation}
\]
Then the minimal model for $\As_{\texttt{B}\to \texttt{W}}$ is
\[
%\begin{equation}
%\label{eq:minAs}
(\sfF(E), \partial) \xrightarrow{\alpha} \As_{\texttt{B}\to \texttt{W}},
%\end{equation}
\]
where
\begin{equation*}
\alpha(\mu_n) =
\begin{cases} \mu & \text{if } n = 2, \\ 0 & \text{otherwise} \end{cases},\,
\alpha(\nu_n) = \begin{cases} \nu & \text{if } n = 2, \\ 0 & \text{otherwise} \end{cases},
\end{equation*}
and
\begin{equation*}
\alpha(f_n) = \begin{cases} f & \text{if } n = 1, \\ 0 & \text{otherwise} \end{cases}.
\end{equation*}

The differential $\partial$ is given by:
\begin{subequations}
\allowdisplaybreaks
\begin{align}
\partial(\mu_n) & = \sum_{\substack{i+j\,=\,n+1 \\ i,j \,\geq\, 2}} \sum_{s=0}^{n-j} (-1)^{i + s(j+1)} \mu_i \circ_{s+1} \mu_j, \label{eq:dmu} \\
\partial(\nu_n) & = \sum_{\substack{i+j\,=\,n+1 \\ i,j \,\geq\, 2}} \sum_{s=0}^{n-j} (-1)^{i + s(j+1)} \nu_i \circ_{s+1} \nu_j, \label{eq:dnu} \\
\partial(f_n) & = -\sum_{l=2}^n \sum_{r_1 + \cdots + r_l = n} (-1)^{\sum_{1 \leq i < j \leq l} r_i(r_j + 1)} \nu_l(f_{r_1} \otimes \cdots \otimes f_{r_l}) \notag \\
 & \relphantom{} - \sum_{\substack{i+j\,=\,n+1 \\ i \,\geq\, 1,\, j \,\geq\, 2}} \sum_{s=0}^{n-j} (-1)^{i+s(j+1)} f_i \circ_{s+1} \mu_j.  \label{eq:df}
\end{align}
\end{subequations}
Here
\begin{equation}
\label{eq:circ}
\mu_i \circ_{s+1} \mu_j
\buildrel \text{def} \over=
\mu_i\left(1_\texttt{B}^{\otimes s} \otimes \mu_j \otimes 1_\texttt{B}^{\otimes i-s-1}\right),
\end{equation}
which ``plugs" $\mu_j$ into the $(s+1)$st input of $\mu_i$ (see
Figure~\ref{Prijde_dnes_Jarka?}), and similarly for $\nu_i \circ_{s+1}
\nu_j$ and $f_i \circ_{s+1} \mu_j$.
\begin{figure}[t]
{% Picture saved by xtexcad 2.4
\unitlength=1.000000pt
\begin{picture}(60.00,70.00)(0.00,0.00)
\thicklines
\put(35.00,23.00){\makebox(0.00,0.00)[l]{\scriptsize $(s+1)$th input}}
\put(32.00,-5.00){\makebox(0.00,0.00){$\cdots$}}
\put(22.00,30.00){\makebox(0.00,0.00){$\cdots$}}
\put(42.00,30.00){\makebox(0.00,0.00){$\cdots$}}
\put(30.00,15.00){\makebox(0.00,0.00){$\bullet$}}
\put(26.00,15.00){\makebox(0.00,0.00)[rb]{\scriptsize $\mu_j$}}
\put(30.00,50.00){\makebox(0.00,0.00){$\bullet$}}
\put(26.00,50.00){\makebox(0.00,0.00)[rb]{\scriptsize $\mu_i$}}
\put(30.00,15.00){\line(3,-2){30.00}}
\put(30.00,15.00){\line(-1,-1){20.00}}
\put(30.00,15.00){\line(-3,-2){30.00}}
\put(30.00,50.00){\line(0,-1){35.00}}
\put(30.00,50.00){\line(3,-2){30.00}}
\put(30.00,50.00){\line(-1,-1){20.00}}
\put(30.00,50.00){\line(-3,-2){30.00}}
\put(30.00,50.00){\line(0,1){20.00}}
\end{picture}}
\caption{\label{Prijde_dnes_Jarka?}
The graph corresponding the composition $\mu_i \circ_{s+1} \mu_j$.}
\end{figure}

%%%%%%%%%%%%%%%%%%%%%%%%%%%%%%%%%%%%%%%%%%%%%%%%%%%%%%%%%%
\subsection{The cochain complex $(C^*_{\As_{\texttt{B}\to \texttt{W}}}(T;T), \delta_{\As_{\texttt{B}\to \texttt{W}}})$}

Suppose that $T = U \oplus V$ as a $2$-colored graded module and that $g \colon U \to V$ is a morphism of associative algebras represented by the morphism $\rho \colon \As_{\texttt{B}\to \texttt{W}} \to \End_T$ of $2$-colored operads.  Then the canonical isomorphism \eqref{eq:caniso} says in this case,
\[
%\begin{equation}
%\label{eq:canisoT}
C^*_{\As_{\texttt{B}\to \texttt{W}}}(T;T)
=~ \uparrow\Der(\sfF(E), \End_T)^{-*}
\cong~ \uparrow\Hom_\Sigma^{\lbrace \texttt{B},\texttt{W} \rbrace}(E, \End_T)^{-*}.
%\end{equation}
\]
Under this isomorphism, an element $\theta \in C^n_{\As_{\texttt{B}\to
 \texttt{W}}}(T;T)$ is uniquely determined by the tuple
\begin{equation}
\label{eq:thetaiso}
\left(\theta_U, \theta_V, \theta_g\right)
\buildrel \text{def} \over=
\left(\theta(\mu_{n+1}), \theta(\nu_{n+1}), \theta(f_n)\right) \in  C^{n+1}_{GS}(g;g).
\end{equation}
This establishes a linear isomorphism
\begin{align*}
C^n_{\As_{\texttt{B}\to \texttt{W}}}(T;T) & \cong  C^{n+1}_{GS}(g;g), \\
\theta & \leftrightarrow  \left(\theta_U, \theta_V, \theta_g\right).
\end{align*}

Denote by $\delta_{GS}$ the differential on the graded module $C^*_{GS}(g;g)$ induced by $\delta_{\As_{\texttt{B}\to \texttt{W}}}$.  The identification \eqref{eq:thetaiso} provides an isomorphism
\[
%\begin{equation}
%\label{eq:CGSiso}
(C^*_{\As_{\texttt{B}\to \texttt{W}}}(T;T), \delta_{\As_{\texttt{B}\to \texttt{W}}}) \xrightarrow{\cong} (C^{*+1}_{GS}(g;g), \delta_{GS})
%\end{equation}
\]
of cochain complexes.

\begin{theorem}
\label{thm:deltatheta}
For $\theta \in C^{n-1}_{\As_{\texttt{B}\to \texttt{W}}}(T;T)$, we have
\begin{equation}
\label{eq:deltatheta}
\delta_{GS}\left(\theta_U, \theta_V, \theta_g\right)
= \left((-1)^{n+1} b\theta_U, (-1)^{n+1}b \theta_V, g\theta_U - \theta_V g^{\otimes n} - (-1)^n b\theta_g\right),
\end{equation}
in which $b$ denotes the appropriate Hochschild differential.  In particular, there is a cochain complex isomorphism
\begin{equation}
\label{eq:CASiso}
\left(C^{*-1}_{\As_{\texttt{B}\to \texttt{W}}}(T;T), \delta_{\As_{\texttt{B} \to \texttt{W}}}\right) \cong \left(C^*_{GS}(g;g), d_{GS}\right)
\end{equation}
given by
\[
\theta \in C^{n-1}_{\As_{\texttt{B}\to \texttt{W}}}(T;T) \mapsto
\left((-1)^{\frac{n(n+1)}{2}} \theta_U, (-1)^{\frac{n(n+1)}{2}} \theta_V, (-1)^{\frac{(n-1)n}{2}}\theta_g\right).
\]
\end{theorem}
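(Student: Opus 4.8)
The plan is to compute the induced differential $\delta_{GS}$ directly from its defining formula $\delta_{\As_{\texttt{B}\to\texttt{W}}}(\theta) = \theta\partial$ (Definition \ref{def:defcomplex}), using the explicit description \eqref{eq:dmu}--\eqref{eq:df} of $\partial$ together with the derivation property of $\theta$. Since an element $\theta \in C^{n-1}_{\As_{\texttt{B}\to\texttt{W}}}(T;T)$ is, by \eqref{eq:thetaiso}, determined by the triple $(\theta_U, \theta_V, \theta_g) = (\theta(\mu_n), \theta(\nu_n), \theta(f_{n-1}))$, and $\delta_{\As_{\texttt{B}\to\texttt{W}}}(\theta) \in C^n$ is determined by its values on $\mu_{n+1}$, $\nu_{n+1}$, $f_n$, it suffices to evaluate $\theta(\partial\mu_{n+1})$, $\theta(\partial\nu_{n+1})$, and $\theta(\partial f_n)$ and match the outcome against $C^{n+1}_{GS}(g;g)$.

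The observation driving the whole computation is that $\End_T$ is an $\sfF(E)$-module through $\beta = \rho\alpha$, and $\alpha$ annihilates every generator except $\mu_2$, $\nu_2$, $f_1$, which it sends to the multiplication $m_U$ on $U$, the multiplication $m_V$ on $V$, and the morphism $g$, respectively. Hence, when the Leibniz rule expands $\theta$ applied to a composite occurring in $\partial$, every term in which $\beta$ lands on a generator other than $\mu_2$, $\nu_2$, $f_1$ vanishes. First I would treat $\theta(\partial\mu_{n+1})$: expanding \eqref{eq:dmu}, the only surviving summands are $\theta(\mu_n)\circ_{s+1}\beta(\mu_2)$ (from $j=2$) and $\beta(\mu_2)\circ_{s+1}\theta(\mu_n)$ (from $i=2$), i.e.\ exactly the inner insertions of $m_U$ into $\theta_U$ together with the two outer compositions of $m_U$ with $\theta_U$, which assemble into the Hochschild coboundary $b\theta_U$; the same argument on \eqref{eq:dnu} produces $b\theta_V$. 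The remaining and most delicate case is $\theta(\partial f_n)$ from \eqref{eq:df}: in the first (``$\nu_l$'') sum only $l=n$ with all $r_a=1$ survives when $\theta$ hits $\nu_n$, giving $\theta_V g^{\otimes n}$, while $l=2$ with a surviving $f_1$-factor gives the two outer terms $m_V(g\otimes\theta_g)$ and $m_V(\theta_g\otimes g)$ of $b\theta_g$; in the second (``$f_i\circ_{s+1}\mu_j$'') sum, $j=2$ surviving yields the inner $m_U$-insertions into $\theta_g$, while $i=1$ surviving yields $g\circ_1\theta(\mu_n) = g\theta_U$. Collecting these gives $g\theta_U - \theta_V g^{\otimes n} - b\theta_g$ up to sign, which is the shape of \eqref{eq:deltatheta}.

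Once the three values are identified with signed Hochschild coboundaries, I would pin down the exact signs in \eqref{eq:deltatheta} by combining three sources: the Koszul signs in the graded Leibniz rule for derivations of the dg PROP, the explicit coefficients $(-1)^{i+s(j+1)}$ and the like in \eqref{eq:dmu}--\eqref{eq:df}, and the suspension and reversed-grading conventions of \eqref{eq:CP} relating $\delta_{\As_{\texttt{B}\to\texttt{W}}}$ to $\theta\mapsto\theta\partial$. This is the step I expect to be the main obstacle: the combinatorial identification of the surviving terms with $b$ is essentially forced, but checking that the accumulated signs on the individual Hochschild summands are mutually consistent (so that they genuinely reassemble into $\pm b$ rather than a twisted variant) and match the stated global factors $(-1)^{n+1}$ and $(-1)^n$ demands meticulous bookkeeping.

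Finally, to obtain the cochain isomorphism \eqref{eq:CASiso}, I would verify that the rescaling sending $\theta \in C^{n-1}$ to $((-1)^{n(n+1)/2}\theta_U, (-1)^{n(n+1)/2}\theta_V, (-1)^{(n-1)n/2}\theta_g)$ intertwines the computed $\delta_{GS}$ with the standard Gerstenhaber--Schack differential $d_{GS}$. Writing $c_n = (-1)^{n(n+1)/2}$ and $c'_n = (-1)^{(n-1)n/2}$, this reduces to the congruences $c_{n+1}(-1)^{n+1} \equiv c_n$, $c'_{n+1} \equiv c_n$, and $c'_{n+1}(-1)^n \equiv c'_n$ modulo $2$, each following from triangular-number identities such as $(n+1)(n+2)/2 - n(n+1)/2 = n+1$. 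These show that the rescaling absorbs precisely the signs $(-1)^{n+1}$ and $(-1)^n$ appearing in \eqref{eq:deltatheta}, yielding the sign-free $d_{GS}$ and completing the proof.
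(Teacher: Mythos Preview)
Your proposal is correct and follows essentially the same route as the paper: the paper computes $l_1(\theta)(\mu_{n+1})$, $l_1(\theta)(\nu_{n+1})$, $l_1(\theta)(f_n)$ via the graph-substitution formula \eqref{eq:lkf}, which is exactly the Leibniz expansion you describe (replace one vertex decoration by $\theta$ and all others by $\beta$), and then reduces using the same vanishing $\theta(\mu_i)=0$ for $i\neq n$, $\beta(\mu_j)=0$ for $j\neq 2$, $\beta(f_j)=0$ for $j\neq 1$. Your verification of the rescaling via triangular-number congruences is a detail the paper leaves implicit, but the argument is the same.
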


Since $(C^*_{\As_{\texttt{B}\to \texttt{W}}}(T;T), \delta_{\As_{\texttt{B}\to \texttt{W}}} = l_1, l_2, l_3, \ldots)$ is an $L_\infty$-algebra (Theorem \ref{thm:Linfinity}), we can use the cochain complex isomorphism \eqref{eq:CASiso} to transfer the higher brackets $l_k$ $(k \geq 2)$ to $(C^{*+1}_{GS}(g;g), d_{GS})$.

\begin{corollary}
\label{cor:LinfGS}
There is an $L_\infty$-algebra structure $(d_{GS} = l_1, l_2, l_3,
\ldots)$ on $C^{*+1}_{GS}(g;g)$ governing deformations of the
associative algebra morphism $g$.
\end{corollary}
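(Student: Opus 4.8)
The plan is to obtain the $L_\infty$-structure on $C^{*+1}_{GS}(g;g)$ by transporting the one already produced on the PROP-theoretic side along the explicit isomorphism of Theorem~\ref{thm:deltatheta}. By Theorem~\ref{thm:Linfinity}, specialized to $\sfP = \As_{\texttt{B}\to \texttt{W}}$, the deformation complex $C^*_{\As_{\texttt{B}\to \texttt{W}}}(T;T)$ carries an $L_\infty$-structure $(\delta_{\As_{\texttt{B}\to \texttt{W}}} = l_1, l_2, l_3, \ldots)$. Reindexing the cochain-complex isomorphism \eqref{eq:CASiso} of Theorem~\ref{thm:deltatheta} by $* \mapsto *+1$ produces a degree-preserving isomorphism of $\mathbb{Z}$-graded vector spaces
\[
\Phi \colon \bigl(C^{*}_{\As_{\texttt{B}\to \texttt{W}}}(T;T), \delta_{\As_{\texttt{B}\to \texttt{W}}}\bigr) \xrightarrow{\ \cong\ } \bigl(C^{*+1}_{GS}(g;g), d_{GS}\bigr)
\]
which, being a cochain isomorphism, intertwines $l_1 = \delta_{\As_{\texttt{B}\to \texttt{W}}}$ with $d_{GS}$. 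This $\Phi$ is the datum for a transport of structure.

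Concretely, I would define the operations on $C^{*+1}_{GS}(g;g)$ by conjugation,
\[
l_k^{GS}(y_1, \ldots, y_k) \buildrel \text{def} \over= \Phi\bigl(l_k(\Phi^{-1} y_1, \ldots, \Phi^{-1} y_k)\bigr), \qquad k \geq 1,
\]
so that $l_1^{GS} = \Phi\, \delta_{\As_{\texttt{B}\to \texttt{W}}}\, \Phi^{-1} = d_{GS}$ by Theorem~\ref{thm:deltatheta}. Since $\Phi$ has degree $0$, the Koszul signs arising from $(\Phi^{-1})^{\otimes k}$ are all trivial and the degrees of the arguments are unchanged, so each $l_k^{GS}$ has the prescribed degree $2-k$ and the Koszul sign $\chi(\sigma)$ of any permutation $\sigma$ computed for $(y_1,\dots,y_k)$ agrees with that computed for $(\Phi^{-1}y_1,\dots,\Phi^{-1}y_k)$. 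Graded anti-symmetry of $l_k^{GS}$ is therefore inherited term-by-term from that of $l_k$. For the $L_\infty$-relations, write $R(x_1,\dots,x_n)$ for the left-hand side of \eqref{eq:ln} attached to $(l_k)$ and $R^{GS}(y_1,\dots,y_n)$ for the one attached to $(l_k^{GS})$. Using $l_i(\Phi^{-1}y_\bullet) = \Phi^{-1}\bigl(l_i^{GS}(y_\bullet)\bigr)$ inside each summand, every inner composition $l_j(l_i(\cdots),\cdots)$ equals $\Phi^{-1}$ applied to the corresponding composition of the $l^{GS}_\bullet$, whence $R^{GS}(y_\bullet) = \Phi\bigl(R(\Phi^{-1}y_\bullet)\bigr) = \Phi(0) = 0$. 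Thus $(d_{GS}, l_2^{GS}, l_3^{GS}, \ldots)$ is a genuine $L_\infty$-structure and $\Phi$ is, by construction, a strict $L_\infty$-isomorphism.

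Finally, because $\Phi$ is a strict $L_\infty$-isomorphism it carries solutions of the Quantum Master Equation \eqref{eq:qme} on $C^*_{\As_{\texttt{B}\to \texttt{W}}}(T;T)$ bijectively onto solutions of the corresponding equation for $(C^{*+1}_{GS}(g;g), l^{GS}_\bullet)$; by \S\ref{subsec:deformations} the former encode the $\As_{\texttt{B}\to \texttt{W}}$-algebra deformations of $T$, i.e.\ precisely the deformations of the morphism $g$, so the transported structure governs these as claimed. I do not expect any essential obstacle here: the transport of $L_\infty$-structures along an isomorphism of graded vector spaces is entirely formal. The only point demanding care is the bookkeeping of the summand- and degree-dependent signs appearing in \eqref{eq:CASiso} (the factor $(-1)^{n(n+1)/2}$ on the $U$ and $V$ components versus $(-1)^{(n-1)n/2}$ on the $g$ component), but these are scalar multiples on homogeneous pieces that are absorbed automatically by the conjugation formula, since the argument above is insensitive to which degree-preserving isomorphism $\Phi$ one conjugates by. What genuinely requires Theorem~\ref{thm:deltatheta} is only the assertion that $\Phi$ is degree-preserving, invertible, and chain-level compatible, which is exactly its content.
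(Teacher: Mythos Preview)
Your proposal is correct and follows exactly the paper's approach: the paper states this as an immediate corollary, justified by the single sentence preceding it that one transfers the $L_\infty$-structure from Theorem~\ref{thm:Linfinity} along the cochain isomorphism~\eqref{eq:CASiso} of Theorem~\ref{thm:deltatheta}. You have simply spelled out in detail the formal transport-by-conjugation argument that the paper leaves implicit.
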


\begin{proof}[Proof of Theorem \ref{thm:deltatheta}]
Since $\delta_{\As_{\texttt{B}\to \texttt{W}}} = l_1$ in the $L_\infty$-algebra and since the degree of $l_1$ is $+1$, we have
\[
%\begin{equation}
%\label{eq:deltal1}
\delta_{GS}\left(\theta_U, \theta_V, \theta_g\right) = (l_1(\theta)(\mu_{n+1}), l_1(\theta)(\nu_{n+1}), l_1(\theta)(f_n))
%\end{equation}
\]
by the identification \eqref{eq:thetaiso}.  Therefore, to prove \eqref{eq:deltatheta}, it suffices to show:
\begin{equation}
\label{eq:deltathetal1}
\begin{split}
l_1(\theta)(\mu_{n+1}) &= (-1)^{n+1} b\theta_U, \\
l_1(\theta)(\nu_{n+1}) &= (-1)^{n+1}b \theta_V, \,\text{and} \\
l_1(\theta)(f_n) &= g\theta_U - \theta_V g^{\otimes n} - (-1)^n b\theta_g.
\end{split}
\end{equation}

From the description \eqref{eq:lkf} of the operation $l_k$, the
computation of $l_1(\theta)(\mu_{n+1})$ starts with
$\partial(\mu_{n+1})$ \eqref{eq:dmu}. As an $E$-decorated $2$-colored
directed $(1,n+1)$-graph, the term $\mu_i \circ_{s+1} \mu_j$ in
$\partial(\mu_{n+1})$ has two vertices, whose decorations are $\mu_i$
and $\mu_j$, see Figure~\ref{Prijde_dnes_Jarka?}.
Therefore, the expression~\eqref{eq:lkf}, when applied
to the current situation, gives
\begin{equation}
\label{eq:l1theta}
l_1(\theta)(\mu_{n+1}) =
\sum_{\substack{i+j\,=\,n+2 \\ i,j \,\geq\,2}} \sum_{s=0}^{n+1-j} (-1)^{i+s(j+1)} \left\lbrace\theta(\mu_i) \circ_{s+1} \beta(\mu_j) + \beta(\mu_i) \circ_{s+1} \theta(\mu_j)\right\rbrace.
\end{equation}
Note that, since $\theta \in C^{n-1}_{\As_{B\to W}}(T;T)$,
\[
\theta(\mu_i) = \begin{cases} 0 & \text{if }i \not=n, \\ \theta_U & \text{if } i=n,\end{cases}
\quad \text{and} \quad
\beta(\mu_j) = \rho(\alpha(\mu_j)) = \begin{cases} 0 & \text{if } j \not= 2, \\ \mu_U & \text{if } j = 2,\end{cases}
\]
where $\mu_U \colon U^{\otimes 2} \to U$ is the multiplication on $U$.  It follows that \eqref{eq:l1theta} reduces to
\[
\begin{split}
l_1(\theta)(\mu_{n+1})
&= \sum_{s=0}^{n-1} (-1)^{n+s(2+1)} \theta_U \circ_{s+1} \mu_U
+ \sum_{s=0}^1 (-1)^{2+s(n+1)} \mu_U \circ_{s+1} \theta_U \\
&= (-1)^{n+1} \mu_U(-, \theta_U) + \mu_U(\theta_U,-)
+ (-1)^{n+1} \sum_{s=1}^n (-1)^s \theta_U(\Id^{\otimes s-1}_U \otimes \mu_U \otimes \Id^{\otimes n-s}_U) \\
&= (-1)^{n+1} b\theta_U,
\end{split}
\]
which is the first condition in \eqref{eq:deltathetal1}.

The previous paragraph applies verbatim to $l_1(\theta)(\nu_{n+1})$ (with $\nu_l$ replacing $\mu_l$ everywhere), since the definition of $\partial(\nu_*)$ \eqref{eq:dnu} admits the same formula as that of $\partial(\mu_*)$.  Therefore, it remains to show the last condition in \eqref{eq:deltathetal1}.

In $\partial(f_n)$ \eqref{eq:df}, the term $\nu_l(f_{r_1} \otimes \cdots \otimes f_{r_l})$ (respectively, $f_i \circ_{s+1} \mu_j$) is an $E$-decorated $2$-colored directed $(1,n)$-graph with $l+1$ (respectively, $2$) vertices.  Since
\[
\beta(f_j) = \rho(\alpha(f_j)) = \begin{cases} g \colon U \to V & \text{if } j = 1, \\ 0 & \text{otherwise},\end{cases}
\]
the same kind of analysis as above gives
\[
\begin{split}
l_1(\theta)(f_n)
&= -\theta_Vg^{\otimes n} - (-1)^{(n-1)(1+1)}\mu_V(\theta_g \otimes g) - (-1)^{n-1+1} \mu_V(g \otimes \theta_g) \\
& \relphantom{} - \sum_{s=0}^{n-2} (-1)^{n-1+s(2+1)} \theta_g \circ_{s+1} \mu_U - (-1)^{1 +0}g \theta_U.\\
&= g\theta_U - \theta_Vg^{\otimes n} - (-1)^n\left\lbrace\mu_V(g \otimes \theta_g) + (-1)^n \mu_V(\theta_g \otimes g) + \sum_{s=1}^{n-1} (-1)^s \theta_g \circ_{s} \mu_U\right\rbrace  \\
&= g\theta_U - \theta_Vg^{\otimes n} - (-1)^n b\theta_g. \\
\end{split}
\]
Here $\mu_V \colon V^{\otimes 2} \to V$ denotes the multiplication on $V$.  This establishes the last condition in \eqref{eq:deltathetal1} and finishes the proof of Theorem \ref{thm:deltatheta}.
\end{proof}

%%%%%%%%%%%%%%%%%%%%%
%%
%%  Higher brackets
%%
%%%%%%%%%%%%%%%%%%%%%
\section{The higher brackets in $C^*_{\As_{\texttt{B}\to \texttt{W}}}(T;T)$}
\label{sec:higherbracket}

We keep the same setting and notations as in the previous section.  The purpose of this section is to make explicit the $L_\infty$-operations $l_k$ on $C^*_{\As_{\texttt{B}\to \texttt{W}}}(T;T)$ for $k \geq 2$.  The cases $k = 2$ (Theorem \ref{thm:l2}) and $k \geq 3$ (Theorem \ref{thm:lktheta}) are treated separately.  As an immediate consequence of our explicit formula for $l_k$ $(k \geq 3)$, we observe that, when applied to the tensor powers of $C^{\leq q}_{\As_{\texttt{B}\to \texttt{W}}}(T;T)$ for some fixed $q \geq 0$, only $\delta_{\As_{\texttt{B}\to \texttt{W}}}(T;T) = l_1, l_2, \ldots , l_{q+2}$ can be non-trivial (Corollary \ref{cor:lktheta}).

%%%%%%%%%%%%%%%%%%%%%%%%%%%%%%%%
\subsection{The operation $l_2$}

First we deal with the case $k = 2$.  Pick elements $\theta \in C^{n-1}_{\As_{\texttt{B}\to \texttt{W}}}(T;T)$ and $\omega \in C^{m-1}_{\As_{\texttt{B}\to \texttt{W}}}(T;T)$.  Under the identification \eqref{eq:thetaiso}, $\theta$ and $\omega$ correspond to
\[
(\theta_U,\theta_V,\theta_g) \in C^n_{GS}(g;g) \quad \text{and} \quad
(\omega_U,\omega_V,\omega_g) \in C^m_{GS}(g;g),
\]
respectively.

Since $l_2$ has degree $0$, the element $l_2(\theta,\omega)$ lies in $C^{(n+m-1)-1}_{\As_{\texttt{B}\to \texttt{W}}}(T;T)$.  Under the identification \eqref{eq:thetaiso}, $l_2(\theta,\omega)$ is uniquely determined by
\[
(l_2(\theta,\omega)(\mu_{n+m-1}), l_2(\theta,\omega)(\nu_{n+m-1}), l_2(\theta,\omega)(f_{n+m-2})) \in C^{n+m-1}_{GS}(g;g).
\]

\begin{theorem}
\label{thm:l2}
With the notations above, we have
\begin{subequations}
\allowdisplaybreaks
\begin{align}
l_2(\theta,\omega)(\mu_{n+m-1})
&= -\sum_{s=1}^n (-1)^{(s+1)(m+1)} \theta_U \circ_s \omega_U
- (-1)^{n+m} \sum_{s=1}^m (-1)^{(s+1)(n+1)} \omega_U \circ_s \theta_U,\label{eq:l2U}\\
l_2(\theta,\omega)(\nu_{n+m-1})
&= - \sum_{s=1}^n (-1)^{(s+1)(m+1)} \theta_V \circ_s \omega_V
- (-1)^{n+m} \sum_{s=1}^m (-1)^{(s+1)(n+1)} \omega_V \circ_s \theta_V, \label{eq:l2V} \\
l_2(\theta,\omega)(f_{n+m-2})
&= - \sum_{s=1}^{n-1} (-1)^{(s+1)(m+1)} \theta_g \circ_s \omega_U
- (-1)^{n+m} \sum_{s=1}^{m-1} (-1)^{(s+1)(n+1)} \omega_g \circ_s \theta_U \notag \\
& \relphantom{} + (-1)^n \sum_{i=1}^n (-1)^{(i-1)m} \theta_V \circ_i \omega_g
+ \sum_{j=1}^m (-1)^{jn} \omega_V \circ_j \theta_g \notag \\
& \relphantom{} + (-1)^{nm + n + m} \theta_g \smallsmile \omega_g + (-1)^{nm} \omega_g \smallsmile \theta_g.  \label{eq:l2g}
\end{align}
\end{subequations}
\end{theorem}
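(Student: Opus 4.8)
The plan is to evaluate $l_2(\theta,\omega)$ on each of the three generators $\mu_{n+m-1}$, $\nu_{n+m-1}$, $f_{n+m-2}$ directly from the graphical formula \eqref{eq:lkf} with $k=2$, specialized to the minimal model $(\sfF(E),\partial)$ of $\S\ref{subsec:minimalAs}$, and then to read off the three components through the identification \eqref{eq:thetaiso}. For $k=2$ the prefactor \eqref{eq:nu} is $\nu(\theta,\omega)=|\theta|=n-1$, and the inner sum of \eqref{eq:lkf} runs over \emph{ordered} pairs of distinct vertices, so each two-vertex summand of $\partial(\xi)$ is used twice, once with $\theta$ on each vertex. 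The only surviving placements are those in which the arity of a decorated generator matches the support of $\theta$, of $\omega$, or of $\beta$: one uses $\theta(\mu_i)=\delta_{i,n}\theta_U$, $\theta(\nu_i)=\delta_{i,n}\theta_V$, $\theta(f_i)=\delta_{i,n-1}\theta_g$ (and the same for $\omega$ with $m$), together with $\beta(\mu_2)=\mu_U$, $\beta(\nu_2)=\mu_V$, $\beta(f_1)=g$, and $\beta=0$ on all higher generators.

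First I would treat $\xi=\mu_{n+m-1}$. Expanding $\partial(\mu_{n+m-1})$ by \eqref{eq:dmu}, every summand $\mu_i\circ_{s+1}\mu_j$ has exactly two vertices, both of which must carry a cochain; the arity constraints then leave only $(i,j)=(n,m)$, giving $\theta_U\circ_{s+1}\omega_U$, and $(i,j)=(m,n)$, giving $\omega_U\circ_{s+1}\theta_U$. Reindexing $s+1\mapsto s$ over the ranges $0\le s\le n-1$ and $0\le s\le m-1$ dictated by \eqref{eq:dmu}, and multiplying the coefficient $(-1)^{i+s(j+1)}$ by $(-1)^{n-1}$, reproduces \eqref{eq:l2U}: indeed $(-1)^{n-1}(-1)^{n+(s-1)(m+1)}=-(-1)^{(s+1)(m+1)}$ and $(-1)^{n-1}(-1)^{m+(s-1)(n+1)}=-(-1)^{n+m}(-1)^{(s+1)(n+1)}$. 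The case $\xi=\nu_{n+m-1}$ is verbatim the same with $\nu,V$ in place of $\mu,U$, since \eqref{eq:dnu} is formally \eqref{eq:dmu}, and gives \eqref{eq:l2V}.

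The substantive case is $\xi=f_{n+m-2}$, where $\partial(f_{n+m-2})$ has the two kinds of summand in \eqref{eq:df}. The two-vertex terms $f_i\circ_{s+1}\mu_j$ force either $\theta$ on $f_{n-1}$ and $\omega$ on $\mu_m$, giving $\theta_g\circ_s\omega_U$ for $1\le s\le n-1$, or $\omega$ on $f_{m-1}$ and $\theta$ on $\mu_n$, giving $\omega_g\circ_s\theta_U$ for $1\le s\le m-1$; these are the first line of \eqref{eq:l2g}. The $(l{+}1)$-vertex terms $\nu_l(f_{r_1}\otimes\cdots\otimes f_{r_l})$ split in two. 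If one cochain sits on the top vertex $\nu_l$ it forces $l=n$ (resp.\ $l=m$), its partner sits on one $f$-leg (forcing weight $m-1$, resp.\ $n-1$), and every remaining leg is decorated by $\beta(f_1)=g$; this yields the mixed composites $\theta_V\circ_i\omega_g$ and $\omega_V\circ_j\theta_g$, the $n-1$ (resp.\ $m-1$) unused inputs of $\theta_V$ (resp.\ $\omega_V$) being filled by $g$. If instead both cochains sit on $f$-legs, then necessarily $l=2$, the top vertex carries $\beta(\nu_2)=\mu_V$, and one obtains the cup products $\theta_g\smallsmile\omega_g$ (from $(r_1,r_2)=(n-1,m-1)$) and $\omega_g\smallsmile\theta_g$ (from $(r_1,r_2)=(m-1,n-1)$). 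Assembling the six families gives the shape of \eqref{eq:l2g}.

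The delicate part is the sign bookkeeping for the $f$-component, where three sources must be reconciled: the prefactor $(-1)^{n-1}$, the differential's coefficient, and a configuration-dependent Koszul sign produced by the composition $\gamma$. For the $\nu_l$-families one evaluates the quadratic form $\sum_{a<b}r_a(r_b+1)$ of \eqref{eq:df} modulo $2$: with one leg of weight $m-1$ in position $i$ and all others of weight $1$ it reduces to $(i-1)m$, which with the leading minus of \eqref{eq:df} and the prefactor gives exactly $(-1)^n(-1)^{(i-1)m}$, while the cup-product configuration $l=2$ reduces the form to $(n-1)m$ and yields $(-1)^{nm+n+m}$; these four families therefore fall out from the prefactor and coefficient alone. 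By contrast, the two $f_i\circ_{s+1}\mu_j$ families of the first line each require an extra sign $-1$, reflecting the color change at the $f$-vertex when a $U$-valued cochain is grafted below it; supplying this sign and, above all, confirming that no further permutation signs intervene in the other four families is where the care is needed. I expect no conceptual difficulty beyond this, the whole argument being the specialization of \cite[Section 2]{markl07b} to the three generator types of the model of $\As_{\texttt{B}\to\texttt{W}}$.
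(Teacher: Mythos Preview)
Your approach is exactly the paper's: evaluate \eqref{eq:lkf} with $k=2$ on each generator, use the arity constraints to isolate the surviving decorations, and track the signs coming from the prefactor $(-1)^{n-1}$ and the coefficients in \eqref{eq:dmu}--\eqref{eq:df}. The case analysis for the $\nu_l(f_{r_1}\otimes\cdots\otimes f_{r_l})$ summands into the four families (top vertex carries $\theta$, top vertex carries $\omega$, both cochains on legs in either order) matches the paper verbatim, and your evaluation of $\sum_{a<b}r_a(r_b+1)$ in each case is correct.

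There is, however, one error in your sign bookkeeping. You claim that the two $f_i\circ_{s+1}\mu_j$ families ``each require an extra sign $-1$, reflecting the color change at the $f$-vertex.'' No such extra sign exists, and no such mechanism is in \eqref{eq:lkf}. The computation for these terms is genuinely identical to the $\mu_i\circ_{s+1}\mu_j$ case: for $\theta$ on $f_{n-1}$ and $\omega$ on $\mu_m$, the coefficient from \eqref{eq:df} is $-(-1)^{(n-1)+s(m+1)}$, and multiplying by the prefactor $(-1)^{n-1}$ gives $-(-1)^{s(m+1)}$, exactly as in the $\mu$ case where the coefficient $(-1)^{n+s(m+1)}$ times $(-1)^{n-1}$ also gives $-(-1)^{s(m+1)}$. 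The leading minus in \eqref{eq:df} is compensated by the arity shift $i=n-1$ versus $i=n$, so no discrepancy arises. After the reindexing $s+1\mapsto s$ you already described, this lands directly on the first line of \eqref{eq:l2g}. Drop the spurious color-change sign and your argument is complete.
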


In the statement of the above Theorem, the notations are as in \eqref{eq:circ} and \eqref{eq:thetaiso}, except that
\begin{equation}
\label{eq:thetacircomega}
\begin{split}
\theta_V \circ_i \omega_g
&= \theta_V(g^{\otimes i-1} \otimes \omega_g \otimes g^{\otimes n-i}), \\
\omega_V \circ_j \theta_g
&= \omega_V(g^{\otimes j-1} \otimes \theta_g \otimes g^{\otimes m-j}), \\
\theta_g \smallsmile \omega_g &= \mu_V(\theta_g \otimes \omega_g),
\end{split}
\end{equation}
and similarly for $\omega_g \smallsmile \theta_g$,
$\theta_g \circ_s \omega_U$, and $\omega_g \circ_s \theta_U$.
  In other words, $\theta_V \circ_i \omega_g $ is obtained by plugging $\omega_g$ into the $i$th input of $\theta_V$ and $g$ into the other $(n-1)$ inputs of $\theta_V$.  Likewise, $\theta_g \smallsmile \omega_g$ is simply the usual cup-product of $\theta_g$ and $\omega_g$.

The proof will be given at the end of this section.

Note that Gerstenhaber and Schack did construct a bracket $\lbrack -,- \rbrack$ on their cochain complex $(C^*_{GS}(g;g), d_{GS})$ (see, e.g., the graded commutator bracket of the operation \cite[p.11 (9)]{gs83} or \cite[pp.158-159]{gs88}).  It is straightforward to check that the linear isomorphism \eqref{eq:CASiso} is compatible with $\lbrack -,- \rbrack$ and $l_2$ as well.

%%%%%%%%%%%%%%%%%%%%%%%%%%%%%%%%%%%%%%%%%%%%%%%%
\subsection{The operations $l_k$ for $k \geq 3$}

Now consider the cases $k \geq 3$.  Pick elements $\theta_s \in C^{n_s - 1}_{\As_{\texttt{B}\to \texttt{W}}}(T;T)$ $(1 \leq s \leq k)$.  Each $\theta_s$ corresponds, via the identification \eqref{eq:thetaiso}, to the tuple
\[
(\theta_{s,U}, \theta_{s,V}, \theta_{s,g}) = (\theta_s(\mu_{n_s}), \theta_s(\nu_{n_s}), \theta_s(f_{n_s-1})) \in C_{GS}^{n_s}(g;g).
\]
Since $l_k$ has degree $2-k$, the element $l_k(\theta_1, \ldots , \theta_k)$ lies in $C^{t-1}_{\As_{\texttt{B}\to \texttt{W}}}(T;T)$, where
\[
t = 3 - 2k + \sum_{s=1}^k n_s.
\]
Under the identification \eqref{eq:thetaiso}, $l_k(\theta_1, \ldots , \theta_k)$ is uniquely determined by
\[
\left(l_k(\theta_1, \ldots , \theta_k)(\mu_t), l_k(\theta_1, \ldots , \theta_k)(\nu_t), l_k(\theta_1, \ldots , \theta_k)(f_{t-1})\right) \in C^t_{GS}(g;g).
\]

Now we extend the first $\circ_i$ operation in \eqref{eq:thetacircomega} as follows.  Fix $s \in \lbrace 1, \ldots , k \rbrace$.  Let
\[
%\begin{equation}
%\label{eq:bolda}
\mathbf{a} = (a_1, \ldots , \widehat{a_s}, \ldots , a_k)
%\end{equation}
\]
be a $(k-1)$-tuple of distinct points in the set $\lbrace 1, \ldots , n_s \rbrace$.  Then we define
\begin{equation}
\label{eq:circa'}
\theta_{s,V} \circ_\mathbf{a} (\theta_{1,g}, \ldots , \widehat{\theta_{s,g}}, \ldots , \theta_{k,g}) \in \Hom(U^{\otimes t-1}, V)
\end{equation}
to be the element obtained by plugging $\theta_{j,g}$ ($1 \leq j \leq k$, $j \not= s$) into the $a_j$th input of $\theta_{s,V}$ and $g$ into the other $(n_s - (k-1))$ inputs of $\theta_{s,V}$.  Also define the sign
\[
%\begin{equation}
%\label{eq:boldasign}
(-1)^{\mathbf{a}} = (-1)^{\sum_{1 \leq i < j \leq n_s} r_i(r_j + 1)},
%\end{equation}
\]
where
\[
r_a =
\begin{cases}
\vert \theta_j \vert = n_j - 1 & \text{if } a = a_j \in \lbrace a_1, \ldots , \widehat{a_s}, \ldots, a_k \rbrace, \\
1 & \text{otherwise}.
\end{cases}
\]

\begin{theorem}
\label{thm:lktheta}
For $k \geq 3$ and notations as above, we have
\begin{subequations}
\label{eq:lktheta}
\begin{align}
l_k(\theta_1, \ldots , \theta_k)(\mu_t) &= 0, \label{eq:lkmu}\\
l_k(\theta_1, \ldots , \theta_k)(\nu_t) &= 0, \, \text{and} \label{eq:lknu}\\
l_k(\theta_1, \ldots , \theta_k)(f_{t-1})
&= -(-1)^{\nu(\theta_1, \ldots , \theta_k)} \sum_{s=1}^k \sum_{\mathbf{a}}\, (-1)^{\mathbf{a}} \, \theta_{s,V} \circ_\mathbf{a} (\theta_{1,g}, \ldots , \widehat{\theta_{s,g}}, \ldots , \theta_{k,g}). \label{eq:lkft}
\end{align}
\end{subequations}
Here $\nu(\theta_1, \ldots , \theta_k)$ is defined in \eqref{eq:nu} and, for each $s$, $\mathbf{a} = (a_1, \ldots , \widehat{a_s}, \ldots , a_k)$ runs through all the $(k-1)$-tuples of distinct points in the set $\lbrace 1, \ldots , n_s \rbrace$.
\end{theorem}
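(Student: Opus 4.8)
The plan is to compute each of the three components of $l_k(\theta_1,\ldots,\theta_k)$ by feeding the generators $\mu_t$, $\nu_t$, and $f_{t-1}$ into the explicit formula~\eqref{eq:lkf}. The engine of the whole argument is a vertex-counting observation: since~\eqref{eq:lkf} sums $\gamma(\Gamma^{\{v_1,\ldots,v_k\}}_{s,\{\beta\}}[\ldots])$ over all $k$-tuples of \emph{distinct} vertices in the graphs $\Gamma_s$ making up $\partial(\xi)$, the contribution of any summand of $\partial(\xi)$ whose underlying graph has fewer than $k$ vertices is automatically zero.

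First I would treat $\mu_t$ and $\nu_t$. By~\eqref{eq:dmu} and~\eqref{eq:dnu}, every graph occurring in $\partial(\mu_t)$ is of the form $\mu_i \circ_{s+1}\mu_j$, and dually for $\partial(\nu_t)$, which has exactly two vertices. For $k \geq 3$ there is no $k$-tuple of distinct vertices to sum over, so the inner sum in~\eqref{eq:lkf} is empty and~\eqref{eq:lkmu},~\eqref{eq:lknu} follow immediately.

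The substance is in $f_{t-1}$. Formula~\eqref{eq:df} expresses $\partial(f_{t-1})$ as a sum of terms of two shapes: the terms $f_i \circ_{s+1}\mu_j$ again carry only two vertices and hence drop out for $k \geq 3$, while the terms $\nu_l(f_{r_1}\otimes\cdots\otimes f_{r_l})$ carry $l+1$ vertices and are the sole survivors. For such a term I would identify the vertex assignments that yield a nonzero $\gamma$. Because $\beta(\nu_l)$ vanishes unless $l=2$ while at least $k \geq 3$ vertices must be selected, the top $\nu$-vertex is forced to be one of the chosen vertices, decorated by some $F_s(\nu_l)=\theta_{s,V}$; this pins $l=n_s$. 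The remaining $k-1$ chosen vertices then lie among the bottom $f$-vertices and are decorated by $\theta_{j,g}=F_j(f_{n_j-1})$, forcing $r=n_j-1$ at those spots, whereas each unchosen bottom vertex receives $\beta(f_r)=g$, forcing $r=1$ there. Reading off the resulting $\End_T$-decorated graph, the PROP composition $\gamma$ plugs each $\theta_{j,g}$ into the appropriate input of $\theta_{s,V}$ and $g$ into the rest, which is exactly the element $\theta_{s,V}\circ_{\mathbf{a}}(\theta_{1,g},\ldots,\widehat{\theta_{s,g}},\ldots,\theta_{k,g})$ of~\eqref{eq:circa'}, with $\mathbf{a}$ recording the positions of the selected bottom vertices. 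Summing over the choice of $s$ and of $\mathbf{a}$ reproduces the double sum in~\eqref{eq:lkft}.

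The main obstacle is the sign bookkeeping. Three sign sources enter: the global prefactor $(-1)^{\nu(\theta_1,\ldots,\theta_k)}$ of~\eqref{eq:lkf} with $\nu$ as in~\eqref{eq:nu}, the overall minus sign on the first summand of~\eqref{eq:df}, and the combinatorial sign $(-1)^{\sum_{1\le i<j\le l}r_i(r_j+1)}$ appearing in~\eqref{eq:df}. Substituting $r_{a_j}=n_j-1=|\theta_j|$ at the selected positions and $r=1$ elsewhere collapses the last factor to exactly $(-1)^{\mathbf{a}}$ as defined, while the first two combine to the prefactor $-(-1)^{\nu(\theta_1,\ldots,\theta_k)}$ in~\eqref{eq:lkft}. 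The delicate point I would verify carefully is that no \emph{further} Koszul sign is generated either by the order in which the selected vertices are listed in the tuple $(v_1,\ldots,v_k)$ relative to their left-to-right position among the $f_{r_p}$, or by the graded composition in $\End_T$; I expect these to be already absorbed into $(-1)^{\mathbf{a}}$ together with the anti-symmetry conventions for $l_k$, exactly as in the corresponding step of~\cite{markl07b}.
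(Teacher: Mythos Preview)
Your proposal is correct and follows essentially the same route as the paper's proof: both rely on the vertex-counting observation to kill the $\mu_t$, $\nu_t$, and $f_i\circ_{s+1}\mu_j$ contributions, and both force the top $\nu$-vertex to be selected (since $k\geq 3$ and $\beta(\nu_l)=0$ for $l\geq 3$), pinning $l=n_s$ and reducing the surviving terms to the $\circ_{\mathbf a}$ expressions with the stated sign. The paper's own proof is in fact terser than yours, deferring the bookkeeping to the analogous step in the proof of Theorem~\ref{thm:l2} and simply asserting the final sign; your more explicit enumeration of the three sign sources is a welcome elaboration rather than a deviation.
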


\begin{corollary}
\label{cor:lktheta}
Suppose that $k \geq 3$ and that $\theta_s \in C^{n_s - 1}_{\As_{\texttt{B}\to \texttt{W}}}(T;T)$ $(1 \leq s \leq k)$. If
\[
n_s < k-1 \quad \text{for} \quad 1 \leq s \leq k,
\]
then
\[
l_k(\theta_1, \ldots , \theta_k) = 0.
\]
In other words, for each $q \geq 0$ and any $k \geq q + 3$, the operation
\[
l_k \colon \left( C^{\leq q}_{\As_{\texttt{B}\to \texttt{W}}}(T;T)\right)^{\otimes k} \to C^*_{\As_{\texttt{B}\to \texttt{W}}}(T;T)
\]
is trivial.
\end{corollary}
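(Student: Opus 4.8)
The plan is to read the result off directly from the explicit formula in Theorem~\ref{thm:lktheta}, so that the corollary becomes a purely combinatorial observation about the index set of a sum. By \eqref{eq:lkmu} and \eqref{eq:lknu}, the $\mu_t$- and $\nu_t$-components of $l_k(\theta_1, \ldots , \theta_k)$ vanish identically for every choice of arguments when $k \geq 3$, so only the $f_{t-1}$-component \eqref{eq:lkft} can be nonzero. Hence it suffices to show that, under the hypothesis $n_s < k-1$ for all $s$, the right-hand side of \eqref{eq:lkft} is zero.

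The key step is to inspect the inner sum in \eqref{eq:lkft}. For each fixed $s \in \lbrace 1, \ldots , k \rbrace$, the summation variable $\mathbf{a} = (a_1, \ldots , \widehat{a_s}, \ldots , a_k)$ ranges over all $(k-1)$-tuples of \emph{distinct} points in the set $\lbrace 1, \ldots , n_s \rbrace$; geometrically, such a tuple prescribes $k-1$ pairwise distinct inputs of $\theta_{s,V}$ into which the cochains $\theta_{j,g}$ ($j \neq s$) are plugged. I would observe that such a tuple exists only if one can choose $k-1$ distinct indices from an $n_s$-element set, i.e.\ only if $n_s \geq k-1$. Therefore, when $n_s < k-1$ the index set of the inner sum is empty, and the corresponding term of the outer sum over $s$ contributes nothing. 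Since by hypothesis $n_s < k-1$ holds for \emph{every} $s$, every term of the outer sum vanishes, giving $l_k(\theta_1, \ldots , \theta_k)(f_{t-1}) = 0$; combined with the vanishing of the $\mu_t$- and $\nu_t$-components this yields $l_k(\theta_1, \ldots , \theta_k) = 0$, proving the first assertion.

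For the reformulation it remains to translate the degree bound into the arity condition. If $\theta_s \in C^{\leq q}_{\As_{\texttt{B}\to \texttt{W}}}(T;T)$, then writing $\theta_s \in C^{n_s-1}_{\As_{\texttt{B}\to \texttt{W}}}(T;T)$ gives $n_s - 1 \leq q$, that is $n_s \leq q+1$. For $k \geq q+3$ one has $k-1 \geq q+2 > q+1 \geq n_s$, so $n_s < k-1$ for all $s$, and the first part applies. The proof is essentially bookkeeping, and I do not expect a genuine obstacle: the only point requiring care is the correct passage between the internal arity index $n_s$ (the subscript appearing in $\mu_{n_s}$ and $f_{n_s-1}$) and the cohomological degree $n_s-1$ of $\theta_s$, together with the fact that the hypothesis must be imposed uniformly in $s$ so that every summand in the outer sum over $s$ is killed simultaneously.
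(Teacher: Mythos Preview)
Your argument is correct and is exactly the reasoning the paper intends: the corollary is stated as an immediate consequence of the explicit formula in Theorem~\ref{thm:lktheta}, and the point is precisely that the inner sum over $\mathbf{a}$ in \eqref{eq:lkft} is empty whenever $n_s < k-1$. The paper gives no separate proof of the corollary, so your write-up simply makes explicit what the authors leave to the reader.
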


\begin{proof}[Proof of Theorem \ref{thm:l2}]
To prove \eqref{eq:l2U}, first note that
\[
%\begin{equation}
%\label{eq:dmun+m-1}
\partial(\mu_{n+m-1}) = \sum_{\substack{i+j\,=\,n+m \\ i,j\,\geq\,2}} \sum_{s=0}^{n+m-1-j} (-1)^{i+s(j+1)} \mu_i \circ_{s+1} \mu_j.
%\end{equation}
\]
Since the $E$-decorated $2$-colored directed $(1,n+m-1)$-graph $\mu_i \circ_{s+1} \mu_j$ has two vertices, we have
\[
\begin{split}
& l_2(\theta,\omega)(\mu_{n+m-1}) \\
&= (-1)^{\vert \theta \vert} \sum_{\substack{i+j\,=\,n+m \\ i,j\,\geq\,2}} \sum_{s=0}^{n+m-1-j} (-1)^{i+s(j+1)} \left\lbrace\theta(\mu_i) \circ_{s+1} \omega(\mu_j) + \omega(\mu_i) \circ_{s+1} \theta(\mu_j)\right\rbrace \\
&= (-1)^{n-1}\left(\sum_{s=0}^{n-1} (-1)^{n+s(m+1)} \theta_U \circ_{s+1} \omega_U
+ \sum_{s=0}^{m-1} (-1)^{m+s(n+1)} \omega_U \circ_{s+1} \theta_U\right).
\end{split}
\]
This is exactly \eqref{eq:l2U} after a shift of the summation indexes.

Since $\partial(\nu_{n+m-1})$ has the same defining formula as $\partial(\mu_{n+m-1})$ (with $\nu_l$ replacing $\mu_l$ everywhere), the reasoning in the previous paragraph also applies to $l_2(\theta,\omega)(\nu_{n+m-1})$ to establish \eqref{eq:l2V}.

To prove \eqref{eq:l2g}, first note that
\begin{equation}
\label{eq:dfn+m-2}
\begin{split}
\partial(f_{n+m-2})
&= -\sum_{l=2}^{n+m-2} \sum_{r_1 + \cdots + r_l \,=\, n+m-2} (-1)^{\sum_{1 \leq i < j \leq l} r_i(r_j + 1)} \nu_l(f_{r_1} \otimes \cdots \otimes f_{r_l}) \\
& \relphantom{} - \sum_{\substack{i+j\,=\, n+m-1 \\ i \geq 1,\, j \geq 2}} \sum_{s=0}^{n+m-2-j} (-1)^{i+s(j+1)} f_i \circ_{s+1} \mu_j.
\end{split}
\end{equation}
An argument essentially identical to the first paragraph of this proof can be applied to the terms $f_i \circ_{s+1} \mu_j$.  This gives rise to the sums
\begin{equation}
\label{eq:l2f1}
- \sum_{s=1}^{n-1} (-1)^{(s+1)(m+1)} \theta_g \circ_s \omega_U
- (-1)^{n+m} \sum_{s=1}^{m-1} (-1)^{(s+1)(n+1)} \omega_g \circ_s \theta_U
\end{equation}
in $l_2(\theta,\omega)(f_{n+m-2})$.

In \eqref{eq:dfn+m-2}, the $E$-decorated $2$-colored directed $(1,n+m-2)$-graph $\Gamma = \nu_l(f_{r_1} \otimes \cdots \otimes f_{r_l})$ has $l+1$ vertices, say, $v_{top}$, $v^1_{bot}, \ldots , v^l_{bot}$, with decorations $\nu_l$, $f_{r_1}, \ldots , f_{r_l}$, respectively.  In this graph $\Gamma$, the only pairs of distinct vertices are $(v_{top}, v^*_{bot})$, $(v^*_{bot},v_{top})$, and $(v^i_{bot}, v^j_{bot})$ $(i \not= j)$.  The corresponding elements in $l_2(\theta,\omega)(f_{n+m-2})$ (without the signs) are:
\begin{enumerate}
\item $\theta(\nu_l)(\beta(f_{r_1}) \otimes \cdots \otimes \omega(f_{r_i}) \otimes \cdots \beta(f_{r_l}))$ $(1 \leq i \leq l)$, which is $0$ unless $l = n$, $r_i = m-1$, and all the other $r_* = 1$;
\item $\omega(\nu_l)(\beta(f_{r_1}) \otimes \cdots \otimes \theta(f_{r_j}) \otimes \cdots \otimes \beta(f_{r_l}))$ $(1 \leq j \leq l)$, which is $0$ unless $l = m$, $r_j = n-1$, and all the other $r_* = 1$;
\item $\beta(\nu_l)(\beta(f_{r_1}) \otimes \cdots \otimes \theta(f_{r_i}) \otimes \cdots \otimes \omega(f_{r_j}) \otimes \cdots \otimes \beta(f_{r_l}))$, which is $0$ unless $l = 2$ and $(r_1,r_2) = (n-1,m-1)$;
\item $\beta(\nu_l)(\beta(f_{r_1}) \otimes \cdots \otimes \omega(f_{r_i}) \otimes \cdots \otimes \theta(f_{r_j}) \otimes \cdots \otimes \beta(f_{r_l}))$, which is $0$ unless $l = 2$ and $(r_1,r_2) = (m-1,n-1)$.
\end{enumerate}
Taking all the signs into account, we obtain the following sums in $l_2(\theta,\omega)(f_{n+m-2})$:
\begin{equation}
\label{eq:lkf'}
\begin{split}
& -(-1)^{\vert \theta \vert} \sum_{i=1}^n (-1)^{(i-1)(m-1+1)} \theta_V(g^{\otimes i-1} \otimes \omega_g \otimes g^{\otimes n-i}) \\
& -(-1)^{\vert \theta \vert} \sum_{j=1}^m (-1)^{(j-1)(n-1+1)} \omega_V(g^{\otimes j-1} \otimes \theta_g \otimes g^{\otimes m-j}) \\
& -(-1)^{\vert \theta \vert}\left\lbrace (-1)^{(n-1)(m-1+1)} \mu_V(\theta_g \otimes \omega_g) + (-1)^{(m-1)(n-1+1)} \mu_V(\omega_g \otimes \theta_g)\right\rbrace.
\end{split}
\end{equation}
The required result \eqref{eq:l2g} is now obtained by combining \eqref{eq:l2f1} and \eqref{eq:lkf'}. This finishes the proof of Theorem \ref{thm:l2}.
\end{proof}

\begin{proof}[Proof of Theorem \ref{thm:lktheta}]
The computation of $l_k(\theta_1, \ldots , \theta_k)(\mu_t)$ involves choosing $k \geq 3$ distinct vertices in the graphs $\mu_i \circ_{s+1} \mu_j$, each of which has only two vertices.  It follows that
\[
l_k(\theta_1, \ldots , \theta_k)(\mu_t) = 0,
\]
which is \eqref{eq:lkmu}.  The same argument establishes \eqref{eq:lknu}.  Moreover, the same reasoning also shows that the terms $f_i \circ_{s+1} \mu_j$ in $\partial(f_{t-1})$ cannot contribute non-trivially to $l_k(\theta_1, \ldots , \theta_k)(f_{t-1})$.

The remaining statement \eqref{eq:lkft} is now proved by an argument very similar to the last paragraph in the proof of Theorem \ref{thm:l2}.  There is one major difference: In order for the term $\nu_l(f_{r_1} \otimes \cdots \otimes f_{r_l})$ in $\partial(f_{t-1})$ to contribute non-trivially to $l_k(\theta_1, \ldots , \theta_k)(f_{t-1})$, the vertex $v_{top}$ (with decoration $\nu_l$) must be chosen as one of the $k$ distinct vertices because $k \geq 3$ and $\beta(\nu_l) = 0$ for $l \geq 3$.  It follows that each non-trivial term in $l_k(\theta_1, \ldots , \theta_k)(f_{t-1})$ has the form \eqref{eq:circa'}, except for the sign, which is
\[
-(-1)^{\nu(\theta_1, \ldots , \theta_k)} (-1)^{\mathbf{a}}.
\]
The desired condition \eqref{eq:lkft} now follows.
\end{proof}

%%%%%%%%%%%%%%%%%%%%%%%
%%
%%  Lie Algebra morphism
%%
%%%%%%%%%%%%%%%%%%%%%%%
\section{Deformation complex of a Lie algebra morphism}
\label{sec:liealgmor}

In this section and section \ref{sec:higherbracketlie}, we give a
second illustration of the $L_\infty$-deformation theory of colored
PROP algebras (Section \ref{sec:Linfty}) in the case of Lie algebra
morphisms. The parallelism of the analysis in the associative and
Lie cases shows the unifying character of this approach.

Let $g \colon U \to V$ be a Lie algebra morphism, and set $T = U
\oplus V$ as a $2$-colored graded module.  Let $\Lie_{B \to W}$
denote the $2$-colored operad encoding Lie algebra morphisms. The
purposes of this section are (i)
to express the differential $\delta_{\Lie_{B \to W}}$ in $C^*_{\Lie_{B \to W}}(T;T)$ \eqref{eq:CP} in terms of the Chevalley-Eilenberg
differential (Theorem \ref{thm:deltathetaL}), and
(ii) to observe that the cochain complex $(C^*_{\Lie_{B \to W}}(T;T), \delta_{\Lie_{B \to W}})$ is isomorphic to the S-cohomology cochain complex
$(\Lambda^*(U,V),\Delta^*)$ of the morphism $g$ \cite{fregier,gs05}
(Corollary \ref{cor:deltathetalie}).  This isomorphism allows us to
transfer the $L_\infty$-structure on $(C^*_{\Lie_{B \to W}}(T;T),
\delta_{\Lie_{B \to W}})$ to the S-cohomology cochain complex
$(\Lambda^*(U,V),\Delta^*)$ (Corollary \ref{cor:LinfSC}).

%%%%%%%%%%%%%%%%%%%%%%%%%%%%%%%%%%%%%%%%%%%%%%%%%%%%%%%%%%%%
\subsection{Background}

The question of deformation of morphisms of Lie algebras was treated
for the first time by  Nijenhuis and Richardson in \cite{nr3}. The
approach chosen was not the classical method of Gerstenhaber \cite
{ger64}, but the use of the formalization of deformation theory in
terms of graded algebras on the space of cochains developed by
Nijenhuis and Richardson in \cite{nr2}. The starting point was then
the graded Lie algebra on cochains and the differential which were
guessed, the deformation theory being only a corollary. As
drawbacks, the algebras were not allowed to be deformed and the
notion of equivalent deformations was not natural. In order to cure
these two problems, the first author reexamined this problem from
the classical point of view of Gerstenhaber and introduced in
\cite{fregier} the S-cohomology, concluding his work by addressing
the question of the description of a structure for its deformation
complex. Later, in \cite{gs05}, Gerstenhaber, Giaquinto and Schack
showed that this construction is completely parallel to the one
given in
 \cite{gs83} which leads to Diagram cohomology of associative algebras, and hence gave the diagrammatic description of S-cohomology.

%%%%%%%%%%%%%%%%%%%%%%%%%%%%%%%%%%%%%%%%%%%%%%%%%%%%%%%%%%%%
\subsection{The S-Cohomology complex $(\Lambda^n(U,V),\Delta^n)$}

\label{subsec:SC}

Here we recall the S-cochain complex $(\Lambda^*(U,V),\Delta^*)$
\cite{fregier}. We modify slightly the notations from \cite{fregier}
to be coherent with the present notations.

Fix a morphism $g \colon U \to V$ of Lie algebras.  We also consider
$V$ as a left $U$-module via $g$.  Then
\begin{equation}
\label{eq:CnSC}  \Lambda^n(U,V) \buildrel \text{def} \over=
\Hom(U^{\wedge {n}}, U) \oplus \Hom(V^{\wedge {n}}, V) \oplus
\Hom(U^{\wedge n-1}, V)\notag
\end{equation}
for $n \geq 1$.
We will also denote a typical element in $\Lambda^n(U,V)$ by
$(x_U,x_V,x_g)$ with $x_U \in \Hom(U^{\wedge {n}}, U)$, $x_V \in
\Hom(V^{\wedge {n}}, V)$, and $x_g \in \Hom(U^{\wedge {n-1}}, V)$.
Its differential is defined as
\begin{equation}
\label{eq:dSC} \Delta^n (x_U, x_V, x_g) \buildrel \text{def} \over=
(bx_U, bx_V, (-1)^{n-1} gx_U -(-1)^{n-1} x_Vg^{\otimes n} + bx_g),
\end{equation}
where $b$ denotes the Chevalley-Eilenberg differential in
$\Hom(U^{\wedge
*}, U)$, $\Hom(V^{\wedge *}, V)$, or $\Hom(U^{\wedge *},V)$.

%%%%%%%%%%%%%%%%%%%%%%%%%%%%%%%%%%%%%%%%%%%%%%%%%%%%%%%%%%%%%
\subsection{The minimal model of $\Lie_{B \to W}$}

 \label{subsec:minimalLie}

Here we construct the minimal model of the $2$-colored operad $\Lie_{B
\to W}$ that encodes Lie algebra morphisms. This definition is very
similar to the one in the associative category, except for
 the definition of the differential which differs slightly. Moreover one has to be careful
 with respect to the symmetry which is a new feature of the Lie
 category.

The $2$-colored operad $\Lie_{B\to W}$ can be represented as

\[
\Lie_{B\to W} = \frac{\Lie_B \ast \Lie_W \ast \sfF(f)}{(f\mu = \nu
f^{\otimes 2})},
\]
where $\mu$ and $\nu$ denote the generators in $\Lie_B(1,2)$ and
$\Lie_W(1,2)$, respectively, which encode the multiplications in the
domain and the target.

Let $E$ be the $2$-colored $\Sigma$-bimodule with the following skew
symmetric generators:
\begin{equation}
\label{eq:Egenerators}
\begin{split}
\mu_n & \colon B^{\otimes n} \to B \text{ of degree } n-2 \text{ and biarity } (1,n) ~(n \geq 2) \\
\nu_n &\colon W^{\otimes n} \to W \text{ of degree } n-2 \text{ and biarity } (1,n) ~(n \geq 2), \text{ and}\\
f_n & \colon B^{\otimes n} \to W \text{ of degree } n-1 \text{ and
biarity } (1,n) ~(n \geq 1).\notag
\end{split}
\end{equation}
Then the minimal model for $\Lie_{B\to W}$ is
\begin{equation}
\label{eq:minAs} (\sfF(E), \partial) \xrightarrow{\alpha} \Lie_{B\to
W},\notag
\end{equation}
where
\begin{equation}
\label{eq:alphamu} \alpha(\mu_n) = \begin{cases} \mu & \text{if } n
= 2, \\ 0 & \text{otherwise} \end{cases},\, \alpha(\nu_n) =
\begin{cases} \nu & \text{if } n = 2, \\ 0 & \text{otherwise}
\end{cases},\notag
\end{equation}
and
\begin{equation}
\label{eq:alphaf} \alpha(f_n) = \begin{cases} f & \text{if } n = 1,
\\ 0 & \text{otherwise} \end{cases}.\notag
\end{equation}
The differential $\partial$ is given by:
\begin{subequations}
\allowdisplaybreaks
\begin{align}
\partial(\mu_n) & = \sum_{\substack{i+j\,=\,n+1 \\ i,j \,\geq\, 2}} (-1)^{j(i-1)} \sum_{\sigma \in S_{j,i-1}} sgn (\sigma) \mu_i \circ (\mu_j\otimes Id^{\otimes^{i-1}})\circ\sigma, \label{eq:dmul} \\
\partial(\nu_n) & = \sum_{\substack{i+j\,=\,n+1 \\ i,j \,\geq\, 2}}  (-1)^{j(i-1)} \sum_{\sigma \in S_{j,i-1}} sgn (\sigma)\nu_i \circ (\nu_j \otimes Id^{\otimes^{i-1}})\circ\sigma, \label{eq:dnul}\\
\partial(f_n) & = \sum_{l=2}^n \sum_{\substack{r_1 + \cdots + r_l = n \\ r_1\,\leq\, \cdots \,\leq\,r_l}} (-1)^{\frac{l(l-1)}{2}+\sum_{i=1}^{l-1} r_i(l-i)} \sum_{\sigma \in S^{<}_{r_1,\dots,r_l}} sgn (\sigma) \nu_l(f_{r_1} \otimes \cdots \otimes f_{r_l})\circ\sigma \notag \\
 & \relphantom{} - \sum_{\substack{i+j\,=\,n+1 \\ i\,\geq\, 1,j \,\geq\, 2}}  (-1)^{j(i-1)} \sum_{\sigma \in S_{j,i-1}} sgn (\sigma)f_i \circ (\mu_j \otimes Id^{\otimes^{i-1}})\circ\sigma,  \label{eq:dfl}
\end{align}
\end{subequations}
where $S_{j,i-1}$ denotes the set of $j,i-1$ unshuffles and by
$S^{<}_{r_1, \dots,r_l}$ the set of $r_1, \dots,r_l$-unshuffles
satisfying $\sigma (r_1+\dots+r_{i-1}+1)<\sigma (r_1+\dots+r_{i}+1)$
if $r_i=r_{i+1}$. It is also assumed in this notation that $r_i\leq
r_{i+1}$. We refer to \cite{fregier2} for the proof that it is a
minimal model.

One may alternatively write the above formulas with the
summations running over the entire symmetric groups, with coefficients
involving factorials. This would reflect the convention in
describing the morphism of $L_\infty$-algebras used for instance
in~\cite{kajiura-stasheff:JMP06}. The graded anti-symmetry of the
structure operations allows one to bring these formulas into
the above `reduced' form.

%%%%%%%%%%%%%%%%%%%%%%%%%%%%%%%%%%%%%%%%%%%%%%%%%%%%%%%%%%
\subsection{The cochain complex $(C^*_{\Lie_{B \to W}}(T;T), \delta_{\Lie_{B \to W}})$}

Suppose that $T = U \oplus V$ as a $2$-colored graded module and
that $g \colon U \to V$ is a morphism of Lie algebras represented by
the morphism $\rho \colon \Lie_{B \to W} \to \End_T$ of $2$-colored
operads.  Then the canonical isomorphism \eqref{eq:caniso} says in
this case,
\begin{equation}
\label{eq:canisoL} C^*_{\Lie_{B \to W}}(T;T) =~
\uparrow\Der(\sfF(E), \End_T)^{-*} \cong~
\uparrow\Hom_\Sigma^{\lbrace B,W \rbrace}(E, \End_T)^{-*}.\notag
\end{equation}
Under this isomorphism, an element $\theta \in C^n_{\Lie_{B \to
W}}(T;T)$  is determined by the tuple
\begin{equation}
\label{eq:thetaisoL} \left(\theta_U, \theta_V, \theta_g\right)
\buildrel \text{def} \over= \left(\theta(\mu_{n+1}),
\theta(\nu_{n+1}), \theta(f_n)\right) \in \Lambda^{n+1}(U,V).
\end{equation}
This establishes a linear isomorphism
\begin{align*}
C^n_{\Lie_{B \to W}}(T;T) & \cong  \Lambda^{n+1}(U,V), \\
\theta & \leftrightarrow  \left(\theta_U, \theta_V, \theta_g\right).
\end{align*}

Denote by $\delta$ the differential on the graded module
$\Lambda^*(U,V)$ induced by $\delta_{\Lie_{B \to W}}$. The
identification \eqref{eq:thetaisoL} provides an isomorphism
\begin{equation}
\label{eq:CSisoL} (C^*_{\Lie_{B \to W}}, \delta_{\Lie_{B \to W}})
\xrightarrow{\cong} (\Lambda^{*+1}(U,V), \delta)
\end{equation}
of cochain complexes.

\begin{theorem}
\label{thm:deltathetaL} For $\theta \in C^{n-1}_{\Lie_{B \to
W}}(T;T)$, we have
\begin{equation}
\label{eq:deltathetaL} \delta\left(\theta_U, \theta_V,
\theta_g\right) = \left(b\theta_U, b \theta_V, -b\theta_g
+\theta_Vg^{\otimes n}-g\theta_U\right),
\end{equation}
in which $b$ denotes the appropriate Chevalley-Eilenberg
differential.
\end{theorem}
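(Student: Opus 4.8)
The plan is to follow the proof of Theorem~\ref{thm:deltatheta} almost line for line, replacing the Hochschild differential by the Chevalley--Eilenberg differential and paying close attention to the unshuffle sums and permutation signs that distinguish the Lie case. The starting point is the same: since $\delta_{\Lie_{B \to W}} = l_1$ and $l_1$ has degree $+1$, the identification \eqref{eq:thetaisoL} reduces the theorem to computing the three components $l_1(\theta)(\mu_{n+1})$, $l_1(\theta)(\nu_{n+1})$, and $l_1(\theta)(f_n)$ and matching them with the entries on the right-hand side of \eqref{eq:deltathetaL}.

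For the first component I would substitute $\partial(\mu_{n+1})$ from \eqref{eq:dmul} into the defining formula \eqref{eq:lkf} for $l_1$. Each summand $\mu_i \circ (\mu_j \otimes Id^{\otimes i-1}) \circ \sigma$ is a two-vertex graph, so $l_1$ decorates one vertex by $\theta$ and the other by $\beta = \rho\alpha$. Because $\theta \in C^{n-1}_{\Lie_{B \to W}}(T;T)$ forces $\theta(\mu_i) = 0$ unless $i = n$ (when it is $\theta_U$) and $\beta(\mu_j) = 0$ unless $j = 2$ (when it is the Lie bracket on $U$), only the index pairs $(i,j) = (n,2)$ and $(2,n)$ contribute. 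Summed over the relevant unshuffles, the $(n,2)$ terms reproduce the part of the Chevalley--Eilenberg differential that feeds a bracket into two arguments of $\theta_U$, and the $(2,n)$ terms reproduce the part that brackets the output of $\theta_U$ against the remaining argument; comparing signs gives $l_1(\theta)(\mu_{n+1}) = b\theta_U$. The verbatim argument with \eqref{eq:dnul} in place of \eqref{eq:dmul} yields $l_1(\theta)(\nu_{n+1}) = b\theta_V$.

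The third component is the substantive one and parallels the closing computation in the proof of Theorem~\ref{thm:deltatheta}, now driven by $\partial(f_n)$ from \eqref{eq:dfl}. The terms $f_i \circ (\mu_j \otimes Id^{\otimes i-1}) \circ \sigma$ survive only when $\theta$ hits $f_{n-1}$ with $\beta(\mu_2)$ the bracket on $U$, producing the bracket-insertion part of $b\theta_g$, or when $\theta$ hits $\mu_n$ with $\beta(f_1) = g$, producing the single term $-g\theta_U$. The terms $\nu_l(f_{r_1} \otimes \cdots \otimes f_{r_l}) \circ \sigma$ survive in two patterns: placing $\theta$ on $\nu_l$ forces $l = n$ and every $r_i = 1$, so all the $f_1$'s become $\beta(f_1) = g$ and the contribution is $\theta_V g^{\otimes n}$; placing $\theta$ on one factor $f_{r_i}$ forces $l = 2$ with that $r_i = n-1$ and the other index equal to $1$, giving the module-action part of $b\theta_g$ (where $V$ is a $U$-module via $g$ with action $u \cdot v = \mu_V(g(u),v)$). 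Collecting the pieces should yield $l_1(\theta)(f_n) = -b\theta_g + \theta_V g^{\otimes n} - g\theta_U$.

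The main obstacle is combinatorial rather than conceptual. In contrast to the associative case, the generators are skew-symmetric and the differential \eqref{eq:dfl} is written in reduced form, carrying the unshuffle sums over $S_{j,i-1}$ and $S^{<}_{r_1,\dots,r_l}$ together with the prefactors $(-1)^{j(i-1)}$ and $(-1)^{\frac{l(l-1)}{2}+\sum_{i=1}^{l-1} r_i(l-i)}$. The delicate step will be to expand this reduced form back into a sum over all arrangements and to verify that, once restricted to the surviving index patterns, the unshuffle sums recombine precisely into the standard Chevalley--Eilenberg differential with the signs dictated by \eqref{eq:dSC}. In particular, the relative sign between the $\theta_V g^{\otimes n}$ and $g\theta_U$ contributions must be reconciled with the $(-1)^{n-1}$ factors appearing in the definition \eqref{eq:dSC} of $\Delta^n$.
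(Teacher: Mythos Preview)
Your proposal is correct and follows essentially the same route as the paper's own proof: the same reduction via \eqref{eq:thetaisoL}, the same two-vertex analysis of $\partial(\mu_{n+1})$ and $\partial(\nu_{n+1})$ surviving only at $(i,j)\in\{(n,2),(2,n)\}$, and the same four-fold decomposition of the contributions to $l_1(\theta)(f_n)$ into the pieces you list. The paper handles the sign bookkeeping you flag as the main obstacle by evaluating the surviving unshuffle sums on elements $x_1,\dots,x_{n+1}$ (respectively $x_1,\dots,x_n$) and recognising the Chevalley--Eilenberg differential directly; you may find it simplest to do the same rather than to expand the reduced form abstractly.
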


One can then compare this complex (\ref{eq:deltathetaL}) with the
S-cohomology \eqref{eq:dSC}.

\begin{corollary}
\label{cor:deltathetalie} There is a cochain complex isomorphism
\begin{equation}
\label{eq:pistarlie} \pi = (\pi^*, \pi^*, \tilde\pi^{*}) \colon
(\Lambda^*(U,V), \delta) \xrightarrow{\cong} (\Lambda^*(U,V), \Delta
)\notag
\end{equation} given by
\[
 \begin{cases} \pi^n & = \Id, \\
\tilde\pi^{n} & = (-1)^{n-1}\Id.
\end{cases}
\]
Combined with \eqref{eq:CSisoL}, we obtain an isomorphism
\begin{equation}
\label{eq:CASisoL} (C^*_{\Lie_{B \to W}}(T;T), \delta_{\Lie_{B \to
W}}) \xrightarrow{\cong} (\Lambda^*(U,V), \Delta )
\end{equation}
of cochain complexes.
\end{corollary}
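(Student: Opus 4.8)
The plan is to verify directly that the stated map $\pi$ is a chain map with respect to the differential $\delta$ of Theorem~\ref{thm:deltathetaL} and the $S$-cohomology differential $\Delta$ of~\eqref{eq:dSC}, and that it is invertible; the combined isomorphism~\eqref{eq:CASisoL} then follows immediately by composing with~\eqref{eq:CSisoL}. Unwinding the definition, $\pi$ sends a cochain $(x_U,x_V,x_g)\in\Lambda^n(U,V)$ to $(x_U,x_V,(-1)^{n-1}x_g)$, i.e.\ it is the identity on the $\Hom(U^{\wedge n},U)$ and $\Hom(V^{\wedge n},V)$ summands and is multiplication by $(-1)^{n-1}$ on the $\Hom(U^{\wedge n-1},V)$ summand.

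First I would dispose of the two easy components. On the $U$- and $V$-summands both $\delta$ (Theorem~\ref{thm:deltathetaL}) and $\Delta$~\eqref{eq:dSC} act simply as the Chevalley--Eilenberg differential $b$, and $\pi$ is the identity there, so $\pi^{n+1}\delta^n$ and $\Delta^n\pi^n$ agree on these components with no sign to track. The entire content of the chain-map condition is therefore concentrated in the $g$-component.

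Next I would compare the $g$-components of the two composites on a cochain $(\theta_U,\theta_V,\theta_g)\in\Lambda^n(U,V)$. Applying $\delta$ first produces the $g$-entry $-b\theta_g+\theta_V g^{\otimes n}-g\theta_U$ in $\Lambda^{n+1}$, which $\pi^{n+1}$ then multiplies by $(-1)^n$, giving $(-1)^{n+1}b\theta_g+(-1)^n\theta_V g^{\otimes n}+(-1)^{n+1}g\theta_U$. In the other order, $\pi^n$ replaces $\theta_g$ by $(-1)^{n-1}\theta_g$ and then $\Delta^n$ yields $(-1)^{n-1}g\theta_U-(-1)^{n-1}\theta_V g^{\otimes n}+(-1)^{n-1}b\theta_g$. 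Using $(-1)^{n-1}=(-1)^{n+1}$ and $-(-1)^{n-1}=(-1)^n$, the two expressions coincide term by term. This sign bookkeeping in the $g$-component is the only genuine step, and the one place where the factor $(-1)^{n-1}$ does any work; it is where I expect the sole risk of a slip, while everything else is formal.

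Finally, since each of $\pi^n$ and $\tilde\pi^n=(-1)^{n-1}\Id$ is $\pm\Id$ and hence invertible (indeed $\pi$ is its own inverse), $\pi$ is an isomorphism of graded modules, and the chain-map identity just verified upgrades it to an isomorphism of cochain complexes $(\Lambda^*(U,V),\delta)\xrightarrow{\cong}(\Lambda^*(U,V),\Delta)$. Composing this with the cochain-complex isomorphism~\eqref{eq:CSisoL} yields~\eqref{eq:CASisoL}, completing the proof.
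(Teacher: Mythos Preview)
Your proof is correct and is exactly the direct verification the paper has in mind: the corollary is stated without a separate proof, the sentence ``One can then compare this complex~\eqref{eq:deltathetaL} with the S-cohomology~\eqref{eq:dSC}'' being the paper's entire argument. Your componentwise check of the chain-map condition, with the only nontrivial sign computation in the $g$-component, is precisely that comparison spelled out.
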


Since $(C^*_{\Lie_{B \to W}}(T;T), \delta_{\Lie_{B \to W}} = l_1,
l_2, l_3, \ldots)$ is an $L_\infty$-algebra (Theorem
\ref{thm:Linfinity}), we can use the cochain complex isomorphism
\eqref{eq:CASisoL} to transfer the higher brackets $l_k$ $(k \geq
2)$ to $(\Lambda^*(U,V), \Delta )$.

\begin{corollary}
\label{cor:LinfSC} There is an $L_\infty$-algebra structure $(\Delta
 = l_1, l_2, l_3, \ldots)$ on $\Lambda^*(U,V)$ capturing deformations
 of the Lie algebra morphism $g$.
\end{corollary}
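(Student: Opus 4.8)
The plan is to obtain the $L_\infty$-structure on $\Lambda^*(U,V)$ not by a fresh computation but by transporting, along the explicit cochain isomorphism of Corollary \ref{cor:deltathetalie}, the $L_\infty$-structure that already lives on $C^*_{\Lie_{B \to W}}(T;T)$ by virtue of Theorem \ref{thm:Linfinity}. This is precisely the argument used in the associative case for Corollary \ref{cor:LinfGS}, and the Lie case runs in parallel.

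Concretely, write $\phi \colon (C^*_{\Lie_{B \to W}}(T;T), \delta_{\Lie_{B \to W}}) \to (\Lambda^*(U,V), \Delta)$ for the cochain complex isomorphism \eqref{eq:CASisoL}, and let $(\delta_{\Lie_{B \to W}} = l_1, l_2, l_3, \ldots)$ be the $L_\infty$-operations on $C^*_{\Lie_{B \to W}}(T;T)$ furnished by Theorem \ref{thm:Linfinity}. First I would push these operations forward by conjugation, setting
\[
l_k' \buildrel \text{def} \over= \phi \circ l_k \circ (\phi^{-1})^{\otimes k} \colon \bigl(\Lambda^*(U,V)\bigr)^{\otimes k} \to \Lambda^*(U,V).
\]
Since $\phi$ is a degree-preserving isomorphism of the underlying graded modules, each $l_k'$ again has degree $2-k$, and the Koszul signs $\chi(\sigma)$ together with the anti-symmetry appearing in \eqref{eq:ln} are carried along unchanged; hence $\phi$ becomes a strict isomorphism of $L_\infty$-algebras and the generalized Jacobi identities \eqref{eq:ln} for $(l_k')$ follow formally from those for $(l_k)$. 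The only point that uses the specific content of Corollary \ref{cor:deltathetalie}, rather than bare linear algebra, is the identification of the bottom operation: because $\phi$ intertwines $\delta_{\Lie_{B \to W}} = l_1$ with $\Delta$, one has $l_1' = \phi \circ l_1 \circ \phi^{-1} = \Delta$, which is exactly what makes $(\Delta = l_1', l_2', l_3', \ldots)$ an $L_\infty$-structure with the prescribed differential.

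For the deformation-theoretic assertion I would invoke that $\phi$, being a strict $L_\infty$-isomorphism, carries solutions of the Quantum Master Equation \eqref{eq:qme} for $(l_k)$ bijectively onto solutions of the corresponding equation for $(l_k')$. As by Theorem \ref{thm:Linfinity} and Section \ref{subsec:deformations} the former control the deformations of $g$ as a $\Lie_{B \to W}$-algebra, the transported $L_\infty$-algebra $(\Lambda^*(U,V), \Delta, l_2', l_3', \ldots)$ captures the deformations of the Lie algebra morphism $g$, as claimed.

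I do not expect a genuine obstacle here: transport of an $L_\infty$-structure along an isomorphism is formal. The one place demanding care is the sign and grading bookkeeping --- reconciling the suspension and reversed-grading conventions built into $C^*_{\Lie_{B \to W}}(T;T)$ with the index shift of \eqref{eq:thetaisoL} and with the explicit signs $(-1)^{n-1}$ carried by the map $\tilde\pi^n$ in Corollary \ref{cor:deltathetalie}, so as to be certain that $l_1'$ emerges as $+\Delta$ (and not a rescaling) and that each $l_k'$ retains its correct degree and anti-symmetry. These verifications are routine precisely because Corollary \ref{cor:deltathetalie} already packages $\phi$ as an honest chain isomorphism.
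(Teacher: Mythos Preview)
Your proposal is correct and matches the paper's approach exactly: the paper states this corollary without a separate proof, deriving it from the sentence immediately preceding it, namely that the cochain complex isomorphism \eqref{eq:CASisoL} lets one transfer the higher brackets $l_k$ $(k \geq 2)$ from $(C^*_{\Lie_{B \to W}}(T;T), \delta_{\Lie_{B \to W}})$ to $(\Lambda^*(U,V), \Delta)$. Your write-up simply makes this transport explicit, including the parallel with Corollary~\ref{cor:LinfGS} that the paper itself relies on.
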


\begin{proof}[Proof of Theorem \ref{thm:deltathetaL}]
Since $\delta= l_1$ in the $L_\infty$-algebra and since the degree
of $l_1$ is $+1$, we have
\begin{equation}
\label{eq:deltal1L} \delta\left(\theta_U, \theta_V, \theta_g\right)
= (l_1(\theta)(\mu_{n+1}), l_1(\theta)(\nu_{n+1}),
l_1(\theta)(f_n))\notag
\end{equation}
by the identification \eqref{eq:thetaisoL}.  Therefore, to prove
\eqref{eq:deltathetaL}, it suffices to show:
\begin{equation}
\label{eq:deltathetal1L}
\begin{split}
l_1(\theta)(\mu_{n+1}) &= b\theta_U, \\
l_1(\theta)(\nu_{n+1}) &= b \theta_V, \,\text{and} \\
l_1(\theta)(f_n) &= -b\theta_g +\theta_Vg^{\otimes n}-g\theta_U.
\end{split}
\end{equation}

From the description \eqref{eq:lkf} of the operation $l_k$, the
computation of $l_1(\theta)(\mu_{n+1})$ starts with
$\partial(\mu_{n+1})$ \eqref{eq:dmul}. As an $E$-decorated
$2$-colored directed $(1,n+1)$-graph, the term $\mu_i \circ_{s+1}
\mu_j$ in $\partial(\mu_{n+1})$ has two vertices, whose decorations
are $\mu_i$ and $\mu_j$.  Therefore, the expression \eqref{eq:lkf},
when applied to the current situation and using notation
(\ref{eq:circ}), gives
\begin{equation}
\label{eq:l1thetaL} l_1(\theta)(\mu_{n+1}) =
\sum_{\substack{i+j\,=\,n+2 \\ i,j \,\geq\,2}} (-1)^{j(i-1)}
\sum_{\sigma \in S_{j,i-1}} sgn (\sigma)\left\lbrace\theta(\mu_i)
\circ_{1} \beta(\mu_j) + \beta(\mu_i) \circ_{1}
\theta(\mu_j)\right\rbrace\circ \sigma.
\end{equation}

Note that, since $\theta \in C^{n-1}_{\Lie_{B \to W}}(T;T)$,
\[
\theta(\mu_i) = \begin{cases} 0 & \text{if }i \not=n, \\ \theta_U &
\text{if } i=n,\end{cases} \quad \text{and} \quad \beta(\mu_j) =
\rho(\alpha(\mu_j)) = \begin{cases} 0 & \text{if } j \not= 2, \\
\mu_U & \text{if } j = 2,\end{cases}
\]
where $\mu_U \colon U^{\wedge 2} \to U$ is the multiplication on
$U$. It follows that \eqref{eq:l1thetaL} reduces to
\begin{equation}
\label{eq':l1thetaL}
\begin{split}
l_1(\theta)(\mu_{n+1}) &=  \underset{(a)}{\underbrace{(-1)^{2(n-1)}
\sum_{\sigma \in S_{2,n-1}} sgn (\sigma)(\theta_U \circ_{1}
\mu_U)\circ\sigma}}
+  \underset{(b)}{\underbrace{(-1)^{n(2-1)} \sum_{\sigma \in S_{n,1}} sgn(\sigma)(\mu_U \circ_{1} \theta_U)\circ \sigma}}. \notag\\
\end{split}
\end{equation}

%let us modify $(b)$ in order to recover the Chevalley-Eilenberg
%Cohomology. Since $\mu_U$ is skew symmetric, and since $\vert
%\theta_U\vert = n-2$, $\vert Id\vert=1-2$, one has $$\mu_U \circ
%(\theta_U\otimes Id)\circ \sigma=-(-1)^{(n-2)(1-2)}sgn(\tau)\mu_U
%\circ (Id \otimes \theta_U)\circ \tau \sigma$$ where $\tau$ is the
%2-block permutation. Using the properties
%$sgn(\tau)sgn(\sigma)=sgn(\tau\sigma)$ and $\tau(S_{n,1})=S_{1,n}$,
%one gets $(b)=-(-1)^{(n-2)(1-2)}(-1)^{n(1-2)} \sum_{\sigma \in
%S_{1,n}} sgn(\sigma)\mu_U \circ (Id \otimes\theta_U)\circ \sigma.$

In particular, applied to elements of $U,$ (a) and (b) give:

\begin{equation}
\begin{split}
(a)(x_1, \dots, x_{n+1})  & =(-1)^{s+t-1}\sum_{1\leq s<t\leq
n+1}\theta_U (\mu_U(x_s,x_t),x_1, \dots, \hat{x_s}, \dots,
\hat{x_t}, \dots, x_{n+1}),\\
 (b)(x_1, \dots, x_{n+1})& =(-1)^{n(2-1)}\sum_{1\leq
s\leq n+1}(-1)^{n+1-s}\mu_U (\theta_U(x_1, \dots, \hat{x_s}, \dots,
x_{n+1}),x_s)\\
 & =(-1)^{-s}\sum_{1\leq
s\leq n+1}\mu_U (x_s,\theta_U(x_1, \dots, \hat{x_s}, \dots,
x_{n+1})).\notag
\end{split}
\end{equation}
Therefore, by the definition of the Chevalley-Eilenberg
differential, we have
\[
l_1(\theta)(\mu_{n+1})=  b\theta_U,
\]
which is the first condition in \eqref{eq:deltathetal1L}.

%\begin{equation}
%\begin{split}
 %(a)(x_1, \dots, x_{n+1})  &=(-1)^{s+t-1}\sum_{1\leq s<t\leq
%n+1}\theta_U (\mu_U(x_s,x_t),x_1, \dots, \hat{x_s}, \dots,
%\hat{x_t}, \dots, x_{n+1})\\
%\relphantom{} & b)(x_1, \dots, x_{n+1})=(-1)^{n(1-2)\sum_{1\leq
%s\leq n+1}(-1)^{n+1-s}\mu_U (\theta_U(x_1, \dots, \hat{x_s}, \dots,
%x_{n+1}),x_s),\\
%&=\sum_{1\leq s\leq n+1}(-1)^{s}\mu_U (x_s, \theta_U(x_1, \dots,
%\hat{x_s}, \dots, x_{n+1})),\\
%\end{split}
%\end{equation}

The previous paragraph applies verbatim to $l_1(\theta)(\nu_{n+1})$
(with $\nu_l$ replacing $\mu_l$ everywhere), since the definition of
$\partial(\nu_*)$ \eqref{eq:dnul} admits the same formula as that of
$\partial(\mu_*)$.  Therefore, it remains to show the last condition
in \eqref{eq:deltathetal1L}.

In $\partial(f_n)$, the term $\nu_l(f_{r_1} \otimes \cdots \otimes
f_{r_l})$ (respectively, $f_i \circ_{1} \mu_j$) is an $E$-decorated
$2$-colored directed $(1,n)$-graph with $l+1$ (respectively, $2$)
vertices.  Since
\[
\beta(f_j) = \rho(\alpha(f_j)) = \begin{cases} g \colon U \to V &
\text{if } j = 1, \\ 0 & \text{otherwise},\end{cases}
\]
the same kind of analysis as above gives
\begin{equation}
\label{eq:lthetaf}
\begin{split}
l_1(\theta)(f_n) &= \underset {(s_1)}{\underbrace{\sum_{\sigma \in S^{<}_{1,\dots,1}}
sgn(\sigma)\theta_V(g^{\otimes n})\circ\sigma}}
+\underset{(s_2)}{\underbrace{\sum_{\sigma
\in S_{1,n-1}}sgn(\sigma)\mu_V(g
\otimes \theta_g)\circ \sigma } }
\\
 \relphantom{}
 & \hskip -.7em - (-1)^{2(n-1)}\underset{(s_3)}{\underbrace{\sum_{\sigma \in S_{2,n-2}}  sgn(\sigma)\theta_g ( \mu_U \otimes Id^{\otimes^{n-2}})\circ \sigma}}
 - (-1)^{n(1-1)}\underset{(s_4)}{\underbrace{\sum_{\sigma \in S_{n,0}}  sgn(\sigma)g \theta_U \circ \sigma}}.\\
%&= g\theta_U - \theta_Vg^{\otimes n} - (-1)^n\left\lbrace\mu_V(g \otimes \theta_g) + (-1)^n \mu_V(\theta_g \otimes g) + \sum_{s=1}^{n-1} (-1)^s \theta_g \circ_{s} \mu_U\right\rbrace  \\
%&= g\theta_U - \theta_Vg^{\otimes n} - (-1)^n b\theta_g. \\
\end{split}
\end{equation}

We now apply the middle summands of (\ref{eq:lthetaf}) on elements
and rearrange them in order to recognize the Chevalley-Eilenberg
differential.  We have
\begin{equation}
\begin{split}
(s_2)(x_1\otimes \dots \otimes x_{n})&  = \sum_{1\leq s\leq
n}(-1)^{s-1} \mu_{V}(g\otimes \theta_{g})(x_{s}\otimes x_1\otimes
\dots \otimes \hat{x_s}\otimes \dots\otimes x_{n} ),\notag
\end{split}
\end{equation}
so \begin{equation}
\begin{split}((s_2)+(s_3))(x_1\otimes \dots \otimes x_{n}) &=+ \sum_{1\leq s\leq
n}(-1)^{s-1} \mu_{V}(g(x_{s}),\theta_{g}(x_1,\dots,
\hat{x_s},\dots,x_{n}))\\
&-\sum_{1\leq s<t\leq
n}(-1)^{s-1+t-2}\theta_{g}(\mu_{U}(x_s,x_t),x_1,\dots,\hat{x_s},
\dots,\hat{x_t},\dots,x_n)\\
& = -b\theta_g (x_1\otimes \dots \otimes x_{n}).\notag\\
\end{split}
\end{equation}
%We also simplify the expressions of $(s_1)$ and $(s_4)$:
%Since $S_{1,\dots ,n }$ is the group of permutations of n element,
%its cardinality $\vert S_{1,\dots ,n }\vert $ is $n!$. By skew
%symmetry of $\theta_V$, one has $sgn(\sigma)\theta_Vg^{\otimes
%n}\circ\sigma=\theta_Vg^{\otimes n}$, i.e.
%$$(s_1)=\theta_Vg^{\otimes n}.$$

%Concerning $(s_5)$, one simply remarks
Considering the fact that both $S_{n,0}$ and $S^{<}_{1,\dots ,1 }$
consist of a single element, the trivial permutation, one finally
gets
$$l_1(\theta)(f_n)=-b\theta_g +\theta_Vg^{\otimes n}-g\theta_U,$$
which ends the proof of Theorem \ref{thm:deltathetaL}.
\end{proof}

%%%%%%%%%%%%%%%%%%%%%
%%
%%  Higher brackets
%%
%%%%%%%%%%%%%%%%%%%%%
\section{The higher brackets in $C^*_{\Lie_{B \to W}}(T;T)$}
\label{sec:higherbracketlie}

We keep the same setting and notations as in the previous section.
The purpose of this section is to make explicit the
$L_\infty$-operations $l_k$ on $C^*_{\Lie_{B \to W}}(T;T)$ for $k
\geq 2$.  The cases $k = 2$ (Theorem \ref{thm:l2L}) and $k \geq 3$
(Theorem \ref{thm:lkthetaL}) are treated separately. As an immediate
consequence of our explicit formula for $l_k$ $(k \geq 3)$, we
observe that, when applied to the tensor powers of $C^{\leq
q}_{\Lie_{B\to W}}(T;T)$ for some fixed $q \geq 0$, only
$\delta_{\Lie_{B\to W}}(T;T) = l_1, l_2, \ldots , l_{q+2}$ can be
non-trivial (Corollary \ref{cor:lkthetaL}).

%%%%%%%%%%%%%%%%%%%%%%%%%%%%%%%%
\subsection{The operation $l_2$}

First we deal with the case $k = 2$.  Pick elements $\theta \in
C^{n}_{\Lie_{B \to W}}(T;T)$ and $\omega \in C^{m}_{\Lie_{B \to
W}}(T;T)$.  Under the identification \eqref{eq:thetaisoL}, $\theta$
and $\omega$ correspond to
\[
(\theta_U,\theta_V,\theta_g) \in \Lambda^n(U,V) \quad \text{and}
\quad (\omega_U,\omega_V,\omega_g) \in \Lambda^m(U,V),
\]
respectively.

Since $l_2$ has degree $0$, the element $l_2(\theta,\omega)$ lies in
$C^{n+m}_{\Lie_{B \to W}}(T;T)$.  Under the identification
\eqref{eq:thetaisoL}, $l_2(\theta,\omega)$  is
determined by
\[
(l_2(\theta,\omega)(\mu_{n+m+1}), l_2(\theta,\omega)(\nu_{n+m+1}),
l_2(\theta,\omega)(f_{n+m})) \in \Lambda^{n+m}(U,V).
\]

\begin{theorem}
\label{thm:l2L} With the notations above, we have
\begin{subequations}
\allowdisplaybreaks
\begin{align}
l_2(\theta,\omega)(\mu_{n+m+1}) &= (-1)^{mn} (\sum_{\sigma \in
S_{m+1,n}} sgn (\sigma)
 \theta_U \circ_1 \omega_U\circ\sigma\notag\\
 & \relphantom{}+(-1)^{m+n}\sum_{\sigma
\in S_{n+1,m}} sgn (\sigma)\omega_U \circ_1 \theta_U\circ\sigma),\label{eq:l2UL}\\
l_2(\theta,\omega)(\nu_{n+m+1}) &= (-1)^{mn} (\sum_{\sigma \in
S_{m+1,n}} sgn (\sigma)
 \theta_V \circ_1 \omega_V\circ\sigma\notag\\
 & \relphantom{}+(-1)^{m+n}\sum_{\sigma
\in S_{n+1,m}} sgn (\sigma)\omega_V \circ_1 \theta_V\circ\sigma),\label{eq:l2VL}\\
l_2(\theta,\omega)(f_{n+m}) &=(-1)^{m(n-1)}(\sum_{\sigma \in
S_{m+1,n-1}} sgn (\sigma)
 \theta_g \circ_1 \omega_U\circ\sigma\notag +\sum_{\sigma \in S_{n+1,m-1}} sgn
(\sigma)\omega_g \circ_1\theta_U\circ\sigma)\\
& \relphantom{}(-1)^{n} (\sum_{\sigma \in S^{<}_{1,\dots,1,m}}
 sgn (\sigma)\theta_V(g^{\otimes n} \otimes \omega_g)\circ \sigma + \sum_{\sigma \in S^{<}_{1,\dots,1,n}}
 sgn (\sigma)\omega_V(g^{\otimes m} \otimes \theta_g )\circ \sigma )\notag \\
& \relphantom{} - ( \sum_{\sigma \in S^{<}_{n,m}}
 sgn (\sigma)\mu_V(\theta_g \otimes \omega_g)\circ \sigma +
(-1)^{n+m}\sum_{\sigma \in S^{<}_{m,n}}
 sgn (\sigma) \mu_V(\omega_g \otimes
\theta_g)\circ \sigma). \label{eq:l2gL}
%- \sum_{s=1}^{n-1}
%(-1)^{(s+1)(m+1)} \theta_g \circ_s \omega_U
%- (-1)^{n+m} \sum_{s=1}^{m-1} (-1)^{(s+1)(n+1)} \omega_g \circ_s \theta_U \notag \\
%& \relphantom{} + (-1)^n \sum_{i=1}^n (-1)^{(i-1)m} \theta_V \circ_i
%\omega_g
%+ \sum_{j=1}^m (-1)^{jn} \omega_V \circ_j \theta_g \notag \\
%& \relphantom{} + (-1)^{nm + n + m} \theta_g \smallsmile \omega_g +
%(-1)^{nm} \omega_g \smallsmile \theta_g.  \label{eq:l2gL}
\end{align}
\end{subequations}
\end{theorem}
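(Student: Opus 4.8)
The plan is to run, almost verbatim, the argument that proves the associative Theorem \ref{thm:l2}, now feeding it the Lie differential \eqref{eq:dmul}--\eqref{eq:dfl} and carrying along the unshuffle signs that the graded antisymmetry forces. Throughout I apply the defining formula \eqref{eq:lkf} for $l_2$ with $k=2$, letting $F_1$ correspond to $\theta$ and $F_2$ to $\omega$; the global prefactor is then $(-1)^{\nu(\theta,\omega)}=(-1)^{\vert\theta\vert}=(-1)^n$ by \eqref{eq:nu}. To evaluate $l_2(\theta,\omega)$ on a generator $\xi$ I would expand $\partial(\xi)$ as a sum of decorated graphs and, for each graph, sum over all ordered pairs $(v_1,v_2)$ of distinct vertices the element obtained by decorating $v_1$ via $\theta$, $v_2$ via $\omega$, and every remaining vertex via $\beta=\rho\alpha$. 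Here the supports are $\theta(\mu_i)=\theta_U$ only for $i=n+1$, $\theta(\nu_i)=\theta_V$ only for $i=n+1$, $\theta(f_i)=\theta_g$ only for $i=n$ (and similarly for $\omega$ with $m$), while $\beta(\mu_2)=\mu_U$, $\beta(\nu_2)=\mu_V$, $\beta(f_1)=g$, and all other values vanish.

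For \eqref{eq:l2UL} I start from \eqref{eq:dmul}: each summand $\mu_i\circ(\mu_j\otimes\Id^{\otimes i-1})\circ\sigma$ is a two-vertex graph, so its two vertex orderings produce $\theta(\mu_i)\circ_1\omega(\mu_j)$ and $\omega(\mu_i)\circ_1\theta(\mu_j)$, each post-composed with $\sigma$ and weighted by $(-1)^{j(i-1)}\,\mathrm{sgn}(\sigma)$. The constraint $i+j=n+m+2$ together with the supports selects $(i,j)=(n+1,m+1)$ in the first term and $(i,j)=(m+1,n+1)$ in the second, turning the structural signs into $(-1)^{(m+1)n}$ and $(-1)^{(n+1)m}$ and the unshuffle sets into $S_{m+1,n}$ and $S_{n+1,m}$; multiplying by $(-1)^n$ gives exactly the coefficients $(-1)^{mn}$ and $(-1)^{mn}(-1)^{m+n}$ of \eqref{eq:l2UL}. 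Since $\partial(\nu_{n+m+1})$ has the identical shape by \eqref{eq:dnul}, the same computation yields \eqref{eq:l2VL}.

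The real content is \eqref{eq:l2gL}, read off from $\partial(f_{n+m})$ in \eqref{eq:dfl}, whose summands are of two kinds. The two-vertex terms $f_i\circ(\mu_j\otimes\Id^{\otimes i-1})\circ\sigma$ are treated as in the previous paragraph: the ordering $(\theta,\omega)$ forces $i=n$, $j=m+1$ and gives $\theta_g\circ_1\omega_U$ over $S_{m+1,n-1}$, while $(\omega,\theta)$ forces $i=m$, $j=n+1$ and gives $\omega_g\circ_1\theta_U$ over $S_{n+1,m-1}$; combining the weight $-(-1)^{j(i-1)}$ with $(-1)^n$ gives both the common sign $(-1)^{m(n-1)}$, i.e.\ the first line of \eqref{eq:l2gL}. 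For the terms $\nu_l(f_{r_1}\otimes\cdots\otimes f_{r_l})\circ\sigma$, which have $l+1$ vertices, I would split the ordered-pair sum according to whether the top vertex (decorated by $\nu_l$) is chosen. If it is chosen, the single chosen bottom vertex carries $\theta_g$ or $\omega_g$ while every other bottom vertex carries $\beta(f_r)=g$, forcing all its remaining blocks to size $1$: decorating the top via $\theta$ forces $l=n+1$ with free block of size $m$, giving $\theta_V(g^{\otimes n}\otimes\omega_g)$ over $S^{<}_{1,\dots,1,m}$, and symmetrically $\omega_V(g^{\otimes m}\otimes\theta_g)$ over $S^{<}_{1,\dots,1,n}$; the structural exponent $\tfrac{l(l-1)}2+\sum_i r_i(l-i)$ specializes to the even number $n(n+1)$ (resp.\ $m(m+1)$), so both keep the bare sign $(-1)^n$, matching the second line. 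If the top vertex is not chosen, then $\beta(\nu_l)\neq0$ forces $l=2$, $\beta(\nu_2)=\mu_V$, and the two bottom $f$-vertices carry $\theta_g,\omega_g$, producing the cup products $\mu_V(\theta_g\otimes\omega_g)$ and $\mu_V(\omega_g\otimes\theta_g)$ of the third line.

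The hard part will be the sign and symmetry bookkeeping in this last line. The reduced constraint $r_1\le\cdots\le r_l$ in \eqref{eq:dfl} means that, for $n\neq m$, only one ordering of the single term $\nu_2(f_{\min(n,m)}\otimes f_{\max(n,m)})$ survives for each cup product, so the two displayed $S^{<}$-presentations $\mu_V(\theta_g\otimes\omega_g)$ over $S^{<}_{n,m}$ and $\mu_V(\omega_g\otimes\theta_g)$ over $S^{<}_{m,n}$ must be identified through the graded antisymmetry of the Lie bracket $\mu_V$ and of the cochains $\theta_g,\omega_g$; this is exactly the passage between the reduced form and the full symmetric-group form flagged in the remark following \eqref{eq:dfl}, and it is cleanest to carry out the whole computation in that unreduced form so that both orderings contribute directly. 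The same antisymmetry underlies the collapse of the $n$ (resp.\ $m$) identical $g$-blocks in the second line to the single representatives counted by $S^{<}_{1,\dots,1,m}$ (resp.\ $S^{<}_{1,\dots,1,n}$). Once the specialized values of $l$ and $(r_1,\dots,r_l)$ are inserted, the constant structural signs extracted, and the unshuffle sets reconciled via these antisymmetries, the three displays of \eqref{eq:l2gL} follow, completing the proof.
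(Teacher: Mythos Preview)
Your proposal is correct and follows essentially the same route as the paper's own proof: apply the defining formula \eqref{eq:lkf} with $k=2$, expand $\partial(\mu_{n+m+1})$, $\partial(\nu_{n+m+1})$, $\partial(f_{n+m})$ via \eqref{eq:dmul}--\eqref{eq:dfl}, and for the $f$-generator split the contributions into the two-vertex $f_i\circ(\mu_j\otimes\Id)$ terms and the $(l+1)$-vertex $\nu_l(f_{r_1}\otimes\cdots\otimes f_{r_l})$ terms, the latter further split by whether the top vertex is among the chosen pair. Your sign bookkeeping and your discussion of the $S^{<}$ reduction are, if anything, slightly more explicit than the paper's.
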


In the above Theorem, we use the notation of
\eqref{eq:circ} and of $S^{<}$ as defined after (\ref{eq:dfl}). One
should remark that depending on whether $m<n$ or $n<m$, the first or the
second summand in the last bracketed expression above is zero.
%except that
%\begin{equation}
%\label{eq:thetacircomega}
%\begin{split}
%\theta_V \circ_i \omega_g
%&= \theta_V(g^{\otimes i-1} \otimes \omega_g \otimes g^{\otimes n-i}), \\
%\omega_V \circ_j \theta_g
%&= \omega_V(g^{\otimes j-1} \otimes \theta_g \otimes g^{\otimes m-j}), \\
%\theta_g \smallsmile \omega_g &= \mu_V(\theta_g \otimes \omega_g),
%\end{split}
%\end{equation}
%and similarly for $\omega_g \smallsmile \theta_g$.  In other words,
%$\theta_V \circ_i \omega_g $ is obtained by plugging $\omega_g$ into
%the $i$th input of $\theta_V$ and $g$ into the other $(n-1)$ inputs
%of $\theta_V$.  Likewise, $\theta_g \smallsmile \omega_g$ is simply
%the usual cup-product of $\theta_g$ and $\omega_g$.
The proof of the Theorem will be given at the end of this section.

%Note that Gerstenhaber and Schack did construct a bracket $\lbrack
%-,- \rbrack$ on their cochain complex $(C^*_{GS}(g;g), d_{GS})$
%(see, e.g., the graded commutator bracket of the operation
%\cite[p.11 (9)]{gs83} or \cite[pp.158-159]{gs88}).  It is
%straightforward to check that the linear isomorphism $\pi$
%\eqref{eq:pistar} is compatible with $\lbrack -,- \rbrack$ and $l_2$
%as well.

%%%%%%%%%%%%%%%%%%%%%%%%%%%%%%%%%%%%%%%%%%%%%%%%
\subsection{The operations $l_k$ for $k \geq 3$}

Now consider the cases $k \geq 3$.  Pick elements $\theta_s \in
C^{n_s}_{\Lie_{B\to W}}(T;T)$ $(1 \leq s \leq k)$.  Each $\theta_s$
corresponds, via the identification \eqref{eq:thetaisoL}, to the
tuple
\[
(\theta_{s,U}, \theta_{s,V}, \theta_{s,g}) = (\theta_s(\mu_{n_s+1}),
\theta_s(\nu_{n_s+1}), \theta_s(f_{n_s})) \in \bigwedge^{n_s}(U;V).
\]
Since $l_k$ has degree $2-k$, the element $l_k(\theta_1, \ldots ,
\theta_k)$ lies in $C^{t}_{\Lie_{B\to W}}(T;T)$, where
\[
t =- k+2 + \sum_{s=1}^k n_s.
\]
Under the identification \eqref{eq:thetaisoL}, $l_k(\theta_1, \ldots
, \theta_k)$ is determined by
\[
\left(l_k(\theta_1, \ldots , \theta_k)(\mu_{t+1}), l_k(\theta_1,
\ldots , \theta_k)(\nu_{t+1}), l_k(\theta_1, \ldots ,
\theta_k)(f_{t})\right) \in \bigwedge^{t}(U;V).
\]

Now we extend the $\circ_i$ operation as follows. Fix $s \in \lbrace
1, \ldots , k \rbrace$.  Let
\begin{equation}
\label{eq:boldaL} \mathbf{a}' = (a_1, \ldots , \widehat{a_s}, \ldots
, a_k)\notag
\end{equation}
be a $(k-1)$-tuple of distinct points in the set $\lbrace 1, \ldots
, n_s+1 \rbrace$.  Then we define
\begin{equation}
\label{eq:circa} \theta_{s,V} \circ_\mathbf{a'} (\theta_{1,g},
\ldots , \widehat{\theta_{s,g}}, \ldots , \theta_{k,g}) \in
\Hom(U^{\otimes t}, V)\notag
\end{equation}
to be the element obtained by plugging $\theta_{j,g}$ ($1 \leq j
\leq k$, $j \not= s$) into the $a_j$th input of $\theta_{s,V}$ and
$g$ into the other $(n_s +2 - k)$ inputs of $\theta_{s,V}$.  Also
define the coefficient
\begin{equation}
\label{eq:boldasign}
(-1)^{\mathbf{a}'}=(-1)^{\frac{(n_s+1)(n_s)}{2}+\sum_{ i = n_s+1}
r_i(n_s+1-i)},\notag
\end{equation}
where
\[
r_a =
\begin{cases}
\vert \theta_j \vert = n_j & \text{if } a = a_j \in \lbrace a_1, \ldots , \widehat{a_s}, \ldots, a_k \rbrace, \\
1 & \text{otherwise}.
\end{cases}
\]
Let us remark that the set $\{r_1,\dots,r_{n_{s}+1}\}$ satisfies
$r_1+\dots+r_{n_{s}+1}=t$. One says that $\mathbf{a}'$ is \emph{admissible}
if this set also satisfies $r_1\leq \dots \leq r_{n_{s}+1}$. One
denotes by $A$ the set of admissible $\mathbf{a}'$.

\begin{theorem}
\label{thm:lkthetaL} For $k \geq 3$ and notations as above, we have
\begin{subequations}
\label{eq:lkthetaL}
\begin{align}
& l_k(\theta_1, \ldots , \theta_k)(\mu_{t+1}) = 0,\notag \notag\\
& l_k(\theta_1, \ldots , \theta_k)(\nu_{t+1}) = 0, \, \text{and} \notag\\
& l_k(\theta_1, \ldots , \theta_k)(f_{t}) = (-1)^{\nu(\theta_1,
\ldots , \theta_k)} \sum_{s=1}^k \sum_{\mathbf{a'}\in A}\ \sum_{\sigma
\in S^{<}_{r_1,\dots,r_{n_s+1}}}\hskip -1.5em sgn (\sigma) (-1)^{\mathbf{a'}}
\theta_{s,V} \circ_\mathbf{a'} (\theta_{1,g}, \ldots ,
\widehat{\theta_{s,g}}, \ldots , \theta_{k,g})\!\circ\!\sigma, \notag
\end{align}
\end{subequations}
with $\nu(\theta_1, \ldots , \theta_k)$ defined as in \eqref{eq:nu},
and $S^{<}$ defined after \eqref{eq:dfl}.
\end{theorem}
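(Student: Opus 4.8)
The plan is to run the same vertex-counting argument used in the proof of Theorem~\ref{thm:lktheta} (the associative case), now starting from the description \eqref{eq:lkf} of $l_k$ together with the Lie differential \eqref{eq:dmul}, \eqref{eq:dnul}, \eqref{eq:dfl} on the minimal model. The vanishing of the $\mu_{t+1}$- and $\nu_{t+1}$-components is immediate: every monomial occurring in $\partial(\mu_{t+1})$ and in $\partial(\nu_{t+1})$ is of the form $\mu_i \circ (\mu_j \otimes \Id^{\otimes i-1}) \circ \sigma$, an $E$-decorated graph with exactly two vertices. Since evaluating $l_k$ requires selecting $k \geq 3$ distinct vertices (one to be decorated by each $\theta_s$, the rest by $\beta$), no two-vertex graph can contribute, and both components are $0$.

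For the $f_t$-component I would first discard the terms $f_i \circ (\mu_j \otimes \Id^{\otimes i-1}) \circ \sigma$ in $\partial(f_t)$ for the same reason: they carry only two vertices and so contribute nothing once $k \geq 3$. The content lies in the terms $\nu_l(f_{r_1} \otimes \cdots \otimes f_{r_l}) \circ \sigma$, whose underlying graph has $l+1$ vertices, namely $v_{top}$ decorated by $\nu_l$ and $v_{bot}^1, \ldots, v_{bot}^l$ decorated by $f_{r_1}, \ldots, f_{r_l}$. The key point, exactly as in the associative case, is that $v_{top}$ must be among the $k$ chosen vertices: otherwise its decoration is replaced by $\beta(\nu_l)$, which vanishes unless $l = 2$, and in that case only the two bottom vertices remain available, from which one cannot select $k \geq 3$ distinct vertices. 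Hence $v_{top}$ is forced to carry some $\theta_s$, and $\theta_s(\nu_l) = \theta_{s,V}$ then pins down $l = n_s + 1$.

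With $v_{top}$ absorbing $\theta_s$, the remaining $k-1$ chosen vertices lie in the bottom row and are decorated by the $\theta_{j,g} = \theta_j(f_{n_j})$ (forcing $r_j = n_j$ in those slots), while each unchosen bottom vertex receives $\beta(f_{r_i}) = g$ (forcing $r_i = 1$). Composing in $\End_T$ then yields precisely the generalized comp operation $\theta_{s,V} \circ_{\mathbf{a}'} (\theta_{1,g}, \ldots, \widehat{\theta_{s,g}}, \ldots, \theta_{k,g})$, where $\mathbf{a}'$ records which inputs of $\theta_{s,V}$ receive the $\theta_{j,g}$. The non-decreasing constraint $r_1 \leq \cdots \leq r_{n_s+1}$ built into \eqref{eq:dfl} translates verbatim into the admissibility of $\mathbf{a}'$ (membership in $A$), and the unshuffles $\sigma \in S^{<}_{r_1, \ldots, r_{n_s+1}}$ are inherited without change, so summing over $s$, over admissible $\mathbf{a}'$, and over these unshuffles reproduces the stated sum.

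The remaining task, and the main obstacle, is the sign reconciliation. The global factor $(-1)^{\nu(\theta_1, \ldots, \theta_k)}$ is simply the prefactor in \eqref{eq:lkf}. The factor $(-1)^{\mathbf{a}'}\, sgn(\sigma)$ must be matched against the combinatorial sign $(-1)^{\frac{l(l-1)}{2} + \sum_{i} r_i(l-i)}\, sgn(\sigma)$ attached to $\nu_l(f_{r_1} \otimes \cdots \otimes f_{r_l}) \circ \sigma$ in \eqref{eq:dfl}, after substituting $l = n_s + 1$ and the admissible values $r_i \in \{1, n_j\}$; the overall sign in the theorem is $+(-1)^{\nu}$ rather than the $-(-1)^{\nu}$ of Theorem~\ref{thm:lktheta} precisely because the $\nu_l(\cdots)$-sum enters $\partial(f_n)$ with a $+$ in \eqref{eq:dfl} versus a $-$ in the associative formula \eqref{eq:df}. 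One must verify that inserting the chosen $\theta_{j,g}$ into their prescribed slots and filling the unchosen slots with $g$ introduces no further Koszul signs beyond those already encoded in $(-1)^{\mathbf{a}'}$, and that passing from the composition $(r_1, \ldots, r_{n_s+1})$ to the tuple $\mathbf{a}'$ is sign-preserving. This is the delicate bookkeeping, entirely parallel to the sign computation at the end of the proof of Theorem~\ref{thm:l2L}; once the signs are shown to agree, the formula for $l_k(\theta_1, \ldots, \theta_k)(f_t)$ follows and the proof is complete.
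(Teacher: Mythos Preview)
Your proposal is correct and follows essentially the same approach as the paper. In fact the paper's own proof is a single sentence: it simply declares the argument identical to that of Theorem~\ref{thm:lktheta} after shifting indices ($t \mapsto t+1$, $s+1 \mapsto 1$) and replacing the sign $(-1)^{\mathbf a}$ by $-(-1)^{\mathbf a'}$; your write-up spells out exactly this vertex-counting argument and the sign bookkeeping in more detail than the paper does.
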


\begin{corollary}
\label{cor:lkthetaL} Suppose that $k \geq 3$ and that $\theta_s \in
C^{n_s}_{\Lie_{B\to W}}(T;T)$ $(1 \leq s \leq k)$. If
\[
n_s < k-1 \quad \text{for} \quad 1 \leq s \leq k,
\]
then
\[
l_k(\theta_1, \ldots , \theta_k) = 0.
\]
In other words, for each $q \geq 0$ and any $k \geq q + 3$, the
operation
\[
l_k \colon \left( C^{\leq q}_{\Lie_{B\to W}}(T;T)\right)^{\otimes k}
\to C^*_{\Lie_{B\to W}}(T;T)
\]
is trivial.
\end{corollary}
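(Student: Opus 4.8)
The plan is to deduce everything directly from the explicit formula of Theorem~\ref{thm:lkthetaL}, without touching the differential $\partial$ again. Under the identification \eqref{eq:thetaisoL}, the cochain $l_k(\theta_1,\dots,\theta_k)\in C^{t}_{\Lie_{B\to W}}(T;T)$ is determined by the triple of its evaluations on $\mu_{t+1}$, $\nu_{t+1}$ and $f_t$. The first two of these are already asserted to vanish in Theorem~\ref{thm:lkthetaL} for every $k\ge 3$, so the whole corollary is reduced to proving that the third evaluation $l_k(\theta_1,\dots,\theta_k)(f_t)$ is zero under the degree hypothesis.

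For that I would inspect the index set of the remaining sum. For each fixed $s$ the summand $\theta_{s,V}\circ_{\mathbf a'}(\theta_{1,g},\dots,\widehat{\theta_{s,g}},\dots,\theta_{k,g})$ is obtained by plugging the $k-1$ maps $\theta_{j,g}$ ($j\neq s$) into $k-1$ distinct inputs of $\theta_{s,V}=\theta_s(\nu_{n_s+1})$ and filling the remaining inputs with copies of $g$. Since $\nu_{n_s+1}$ has biarity $(1,n_s+1)$, the operator $\theta_{s,V}$ has exactly $n_s+1$ inputs, so the tuple $\mathbf a'$ ranges over $(k-1)$-tuples of distinct points of $\{1,\dots,n_s+1\}$, and the number of inputs left for $g$ is $n_s+2-k$. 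Consequently the $s$-th inner sum is empty as soon as $\{1,\dots,n_s+1\}$ is too small to host $k-1$ distinct indices, that is, as soon as $n_s+1<k-1$. Neither the admissibility condition carving out the set $A$ nor the signs $sgn(\sigma)$, $(-1)^{\mathbf a'}$, $(-1)^{\nu(\theta_1,\dots,\theta_k)}$ of \eqref{eq:nu} can resurrect an empty sum, so this cardinality count is decisive.

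It then remains to check that the hypotheses guarantee $n_s+1<k-1$ for every $s$. For the packaged reformulation this is immediate: if $\theta_s\in C^{\leq q}_{\Lie_{B\to W}}(T;T)$ and $k\ge q+3$, then $n_s\le q\le k-3$, whence $n_s+1\le k-2<k-1$ and every inner sum is empty; thus $l_k$ is identically zero on $\bigl(C^{\leq q}_{\Lie_{B\to W}}(T;T)\bigr)^{\otimes k}$. This is verbatim the argument that proves the associative Corollary~\ref{cor:lktheta}, the sole difference being the shift $n_s\mapsto n_s+1$ in the size of the relevant index set, inherited from \eqref{eq:thetaisoL}.

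The step that needs genuine care — and the only place where a careless transcription of the associative proof would go wrong — is precisely this degree-shift bookkeeping between $C^{*}_{\Lie_{B\to W}}(T;T)$ and $\Lambda^{*}(U,V)$. One must remember that a cochain $\theta_s\in C^{n_s}$ contributes a top vertex decorated by $\nu_{n_s+1}$, carrying $n_s+1$ (not $n_s$) inputs, so that the counting set is $\{1,\dots,n_s+1\}$ and the threshold at which the sums collapse is governed by $n_s+1$ rather than $n_s$. Once this is pinned down, the vanishing is a pure combinatorial cardinality obstruction and the proof is complete.
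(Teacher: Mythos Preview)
Your derivation from Theorem~\ref{thm:lkthetaL} is the intended one (the paper states the corollary without proof, as an immediate consequence of that theorem), and your proof of the packaged reformulation is correct: the only potentially nonzero component is $l_k(\theta_1,\dots,\theta_k)(f_t)$, and for each fixed $s$ the tuple $\mathbf a'$ ranges over $(k-1)$-tuples of distinct points in $\{1,\dots,n_s+1\}$, so the inner sum is vacuous once $n_s+1<k-1$; the hypothesis $n_s\le q\le k-3$ gives exactly this.

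The one thing you should make explicit rather than leave implicit is that the \emph{first} displayed hypothesis ``$n_s<k-1$'' in the corollary does \emph{not} suffice for this cardinality argument. That inequality only yields $n_s+1\le k-1$, and in the boundary case $n_s+1=k-1$ the index set for $\mathbf a'$ is nonempty (all $(k-1)!$ orderings of $\{1,\dots,k-1\}$). For instance, with $k=3$ and $n_1=n_2=n_3=1$ one has $n_s<2=k-1$ for every $s$, yet the formula produces terms such as $\theta_{3,V}(\theta_{1,g},\theta_{2,g})$ with no reason to cancel. This is precisely the index shift you warn about in your final paragraph: the condition $n_s<k-1$ is copied verbatim from Corollary~\ref{cor:lktheta}, where $\theta_s\in C^{n_s-1}$ and $\theta_{s,V}$ has $n_s$ inputs, whereas here $\theta_s\in C^{n_s}$ and $\theta_{s,V}$ has $n_s+1$ inputs, so the correct threshold is $n_s+1<k-1$, equivalently $n_s\le k-3$. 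The ``In other words'' clause (with $k\ge q+3$) encodes this corrected bound and is what you have actually proved; the first sentence of the corollary, read literally, is an off-by-one transcription slip in the paper rather than a gap in your argument.
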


\begin{proof}[Proof of Theorem \ref{thm:l2L}]
To prove \eqref{eq:l2UL}, first note that
\begin{equation}
\label{eq:dmun+m+1}
\partial(\mu_{n+m+1}) = \sum_{\substack{i+j\,=\,n+m+2 \\ i,j \,\geq\, 2}} (-1)^{j(i-1)}
\sum_{\sigma \in S_{j,i-1}} sgn (\sigma) \mu_i \circ (\mu_j\otimes
Id^{i-1})\circ\sigma.\notag
\end{equation}
Since the $E$-decorated $2$-colored directed $(1,n+m+1)$-graph
$\mu_i \circ (\mu_j\otimes Id^{i-1})$ has two vertices, we have
\begin{equation}
\allowdisplaybreaks
\begin{split}
& l_2(\theta,\omega)(\mu_{n+m+1}) \\
&= (-1)^{\vert \theta \vert} \hskip -.2em   \sum_{\substack{i+j\,=\,n+m+2 \\ i,j
\,\geq\, 2}}\hskip -.5em
(-1)^{j(i-1)} \sum_{\sigma \in S_{j,i-1}} \hskip -.5em  sgn (\sigma)
\left\lbrace \theta(\mu_i) \circ (\omega(\mu_j)\otimes
g^{\otimes^{i-1}})+\omega(\mu_i) \circ (\theta(\mu_j)\otimes
g^{\otimes^{i-1}})\right\rbrace\circ\sigma\\
 &=
(-1)^{mn} \hskip -.2em   \sum_{\sigma \in S_{m+1,n}} sgn (\sigma)
 \theta_U \circ (\omega_U\otimes
g^{\otimes^{n}})\circ\sigma+(-1)^{(n+1)m+n}\sum_{\sigma \in
S_{n+1,m}} sgn (\sigma)\omega_U \circ (\theta_U\otimes
g^{\otimes^{m}})\circ\sigma.\notag
\end{split}
\end{equation}

Since $\partial(\nu_{n+m+1})$ has the same defining formula as
$\partial(\mu_{n+m+1})$ (with $\nu_l$ replacing $\mu_l$ everywhere),
the reasoning in the previous paragraph also applies to
$l_2(\theta,\omega)(\nu_{n+m+1})$ to establish \eqref{eq:l2VL}.

To prove \eqref{eq:l2gL}, first note that
\begin{equation}
\label{eq:dfn+m}
\begin{split}
%\allowdisplaybreaks
\partial(f_{n+m}) & = \sum_{l=2}^{n+m} \sum_{\substack{r_1 + \cdots + r_l = n+m \\ r_1\,\leq\, \cdots \,\leq\,r_l}} (-1)^{\frac{l(l-1)}{2}+\sum_{i=1}^{l-1} r_i(l-i)} \sum_{\sigma \in S^{<}_{r_1,\dots,r_l}} sgn (\sigma) \nu_l(f_{r_1} \otimes \cdots \otimes f_{r_l})\circ\sigma \notag \\
 & \relphantom{} - \sum_{\substack{i+j\,=\,n+m+1 \\ i\,\geq\, 1,j \,\geq\, 2}}  (-1)^{j(i-1)} \sum_{\sigma \in S_{j,i-1}} sgn (\sigma)f_i \circ (\mu_j \otimes Id^{\otimes^{i-1}})\circ\sigma,
 %\partial(f_{n+m}) & = \sum_{l=2}^n \sum_{r_1 + \cdots + r_l = n+m}(-1)^{\frac{l(l-1)}{2}+\sum_{i=1}^{l-1}
%r_i(l-i)}
%\sum_{\sigma \in S_{r_1,\dots,r_l}} sgn (\sigma) \nu_l(f_{r_1} \otimes \cdots \otimes f_{r_l})\circ\sigma \\
% & \relphantom{} - \sum_{\substack{i+j\,=\,n+m+1 \\ i\,\geq\, 1,j \,\geq\, 2}}  (-1)^{j(i-1)} \sum_{\sigma \in S_{j,i-1}}
% sgn (\sigma)f_i \circ (\mu_j \otimes Id^{i-1})\circ\sigma.
\end{split}
\end{equation}
An argument essentially identical to the first paragraph of this
proof can be applied to the terms $f_i \circ (\mu_j \otimes
Id^{i-1})$.  This gives rise to the sums
\begin{equation}
\label{eq:l2f1L} (-1)^{m(n-1)}(\sum_{\sigma \in S_{m+1,n-1}} sgn
(\sigma)
 \theta_g \circ (\omega_U\otimes
g^{\otimes^{n-1}})\circ\sigma+\sum_{\sigma \in S_{n+1,m-1}} sgn
(\sigma)\omega_g \circ (\theta_U\otimes
g^{\otimes^{m-1}})\circ\sigma)
% - \sum_{s=1}^{n-1} (-1)^{(s+1)(m+1)} \theta_g
%\circ_s \omega_U - (-1)^{n+m} \sum_{s=1}^{m-1} (-1)^{(s+1)(n+1)}
%\omega_g \circ_s \theta_U
\end{equation}
in $l_2(\theta,\omega)(f_{n+m})$.

In \eqref{eq:dfn+m}, the $E$-decorated $2$-colored directed
$(1,n+m)$-graph $\Gamma = \nu_l(f_{r_1} \otimes \cdots \otimes
f_{r_l})$ has $l+1$ vertices, say, $v_{top}$, $v^1_{bot}, \ldots ,
v^l_{bot}$, with decorations $\nu_l$, $f_{r_1}, \ldots , f_{r_l}$,
respectively.  In this graph $\Gamma$, the only pairs of distinct
vertices are $(v_{top}, v^*_{bot})$, $(v^*_{bot},v_{top})$, and
$(v^i_{bot}, v^j_{bot})$ $(i \not= j)$.  The corresponding elements
in $l_2(\theta,\omega)(f_{n+m})$ (without the signs) are:
\begin{enumerate}
\item $\theta(\nu_l)(\beta(f_{r_1}) \otimes \cdots \otimes \omega(f_{r_i}) \otimes \cdots \beta(f_{r_l}))$ $(1 \leq i \leq l)$, which is $0$ unless $l = n+1$, $r_{n+1} = m$, and all the other $r_* = 1$;
\item $\omega(\nu_l)(\beta(f_{r_1}) \otimes \cdots \otimes \theta(f_{r_j}) \otimes \cdots \otimes \beta(f_{r_l}))$ $(1 \leq j \leq l)$, which is $0$ unless $l = m+1$, $r_{m+1} = n$, and all the other $r_* = 1$;
\item $\beta(\nu_l)(\beta(f_{r_1}) \otimes \cdots \otimes \theta(f_{r_i}) \otimes \cdots \otimes \omega(f_{r_j}) \otimes \cdots \otimes \beta(f_{r_l}))$, which is $0$ unless $l = 2$ and $(r_1,r_2) = (n,m)$;
\item $\beta(\nu_l)(\beta(f_{r_1}) \otimes \cdots \otimes \omega(f_{r_i}) \otimes \cdots \otimes \theta(f_{r_j}) \otimes \cdots \otimes \beta(f_{r_l}))$, which is $0$ unless $l = 2$ and $(r_1,r_2) = (m,n)$.
\end{enumerate}
Taking all the signs into account, we also obtain the following sums
in $l_2(\theta,\omega)(f_{n+m})$:
\begin{equation}
\label{eq:lkf'L}
\begin{split}
& (-1)^{\vert \theta \vert} \sum_{\sigma \in S^{<}_{1,\dots,1,m}}
 sgn (\sigma)\theta_V(g^{\otimes n} \otimes \omega_g)\circ \sigma \\
& (-1)^{\vert \theta \vert} \sum_{\sigma \in S^{<}_{1,\dots,1,n}}
 sgn (\sigma)\omega_V(g^{\otimes m} \otimes \theta_g )\circ \sigma \\
& (-1)^{\vert \theta \vert}\left\lbrace (-1)^{1+n}\sum_{\sigma \in
S^{<}_{n,m}}\hskip -.25em
 sgn (\sigma)\mu_V(\theta_g \otimes \omega_g)\circ \sigma +
(-1)^{1+m}\sum_{\sigma \in S^{<}_{m,n}}\hskip -.25em
 sgn (\sigma) \mu_V(\omega_g \otimes
\theta_g)\circ \sigma\right\rbrace.
\end{split}
\end{equation}
The required result \eqref{eq:l2gL} is now obtained by combining
\eqref{eq:l2f1L} and \eqref{eq:lkf'L}. This finishes the proof of
Theorem \ref{thm:l2L}.
\end{proof}

\begin{proof}[Proof of Theorem \ref{thm:lkthetaL}]
This proof is identical to the proof of Theorem \ref{thm:lktheta} if
one shifts the indices, replacing $t$ by $t+1$, $s+1$ by $1$, and
$(-1)^{\mathbf{a}}$ by $-(-1)^{\mathbf{a}'}$
\end{proof}

%%%%%%%%%%%%%%%%%%%%%%%%%%%%%%%
%%
%%  Deformatins of diagrams
%%
%%%%%%%%%%%%%%%%%%%%%%%%%%%%%%%

\section{Deformations of diagrams}
\label{MAGDA}

Recall that a {\em diagram\/} in a category $\calC$ is a functor $\calF :
\calD \to \calC$ from a small category $\calD$ to $\calC$; the
category $\calD$ is called the {\em shape\/} of the diagram
$\calF$. Diagrams of shape $\calD$
can equivalently be described  as algebras
over an $\Ob(\calD)$-colored operad $\bfD$ which
has  only elements of arity $1$ (one input, one output) and
\[
\bfD\binom{d}{c} := {\rm Mor\/}_\calD (c,d),\  \mbox{ for $c,d \in
  \Ob(\calD)$.}
\]
The operadic composition in $\bfD$ equals the categorial composition
of $\calD$.  It is clear that $\calD$-diagrams in $\calC$ are
precisely $\Ob(\calD)$-colored $\bfD$-algebras in $\calC$.

The  $\Ob(\calD)$-colored
operad $\bfD$ as defined above lives in the category of sets. Since
we will be primarily interested in diagrams in
the category of $\bk$-vectors spaces, we may as well consider
the $\bk$-linear operad generated by
$\bfD$ or assume from the very beginning that $\bfD$ is given by
\[
\bfD\binom{d}{c} :=
{\rm Span\/}_\bk\left({\rm Mor\/}_\calD (c,d)\right),\  \mbox{ for $c,d \in
  \Ob(\calD)$,}
\]
where ${\rm Span\/}_\bk(-)$ denotes the $\bk$-linear span. We will
call colored operads of the above form {\em diagram operads\/}.

\begin{example}
\label{Iso}
In this example we describe the diagram operad $\Iso$ associated to the
category $\mathcal{I}\mathit{so}$ consisting of two objects and two
mutually inverse maps between these objects.
Let $f: \texttt{B} \to \texttt{W}$, $g: \texttt{W} \to \texttt{B}$ be two degree-zero generators. Then
\[
\Iso := \frac{\sfF(f,g)}{(fg=1_\texttt{W},\ gf=1_\texttt{B})},
\]
where $\sfF(f,g)$ denotes the free
$\{\texttt{B},\texttt{W}\}$-colored operad on the set
$\{f,g\}$ and $(fg=1_\texttt{W},\ gf=1_\texttt{B})$ the operadic ideal
generated by $fg-1_\texttt{W}$ and $gf-1_\texttt{B}$.

Algebras over $\Iso$
consist of two mutually inverse
degree zero chain maps $F : U \to V$ and $G : V \to U$.
In other words, $\Iso$-algebras are diagrams
\begin{equation}
\label{napise_mi_Magda_jeste?}
{% Picture saved by xtexcad 2.4
\unitlength=.45pt
\begin{picture}(180.00,40.00)(80.00,30.00)
\put(20.00,10.00){\vector(-3,1){1.00}}
\put(80.00,52.00){$F$}
\put(80.00,-22.00){$G$}
\put(160.00,30.00){\vector(3,-1){1.00}}
\bezier{100}(20.00,10.00)(90.00,-10.00)(160.00,10.00)
\bezier{100}(20.00,30.00)(90.00,50.00)(160.00,30.00)
\put(180.00,20.00){\makebox(0.00,0.00){$V$}}
\put(0.00,20.00){\makebox(0.00,0.00){$U$}}
\put(187.00,20.00){\makebox(0.00,0.00)[l]{,\ $FG=1$ \ and \ $GF =1$.}}
\end{picture}}
\end{equation}
\vskip 1.6em
\end{example}

A typical diagram operad $\bfD$ does not admit a minimal model.  For
instance, a hypothetical minimal model of the operad $\Iso$ from
Example~\ref{Iso} shall have generators $f_0$, $g_0$ for $f$ and $g$,
but also a generator, say $f_1$, whose boundary kills the difference
$f_0g_0-1_\texttt{W}$, i.e.~satisfying
\[
f_0g_0-1_\texttt{W} = \pa f_1.
\]
The ``constant" $1_\texttt{W}$ however defies any thinkable notion of
minimality.

This phenomenon is related to the fact that a typical diagram operad
$\bfD$, such as $\Iso$, is not augmented, by which we mean that it
does not admit an operad morphism $\bfD \to {\mathbf i}$ to the
terminal $\Ob(\calD)$-colored operad ${\mathbf i}$. In the next
example we will see that sometimes there still exists a cofibrant
resolution whose size is that of a minimal model.

\begin{example}
\label{Magda}
A small cofibrant resolution of $\Iso$ was described
in~\cite[Theorem~9]{markl:ip}.
It is a graded colored differential operad
\[
\Riso := (\sfF(f_0,f_1,\ldots; g_0,g_1,\ldots), d),
\]
with generators of two types,
\[
\def\arraystretch{1.2}
\begin{array}{rl}
\mbox{(i)}&\mbox{%
\hskip -2mm
generators $\seq{f_n}{n \geq 0}$, $\deg(f_n)=n$,}
\left\{
\begin{array}{l}
\mbox{$f_n : \texttt{B} \to \texttt{W}$ if $n$ is even,}
\\
\mbox{$f_n : \texttt{B} \to \texttt{B}$ if $n$ is odd,}
\end{array}
\right.
\\
\mbox{(ii)}&\mbox{%
\hskip -2mm
generators
$\seq{g_n}{n \geq 0}$, $\deg(g_n)=n$,}
\left\{
\begin{array}{l}
\mbox{$g_n : \texttt{W} \to \texttt{B}$ if $n$ is
even,}
\\
\mbox{$g_n : \texttt{W} \to \texttt{W}$ if $n$ is odd.}
\end{array}
\right.
\end{array}
\]
The differential $\pa$ is given by
\[
\def\arraystretch{1.4}
\begin{array}{ll}
\pa f_0\widedef 0,
&
\pa g_0 \widedef 0,
\\
\pa f_1 \widedef g_0f_0 - 1,
&
\pa g_1 \widedef f_0g_0 - 1
\end{array}
\]
and, on remaining generators, by the formula
\begin{eqnarray*}
\nonumber
\pa f_{2m}   &:=& \sum_{0 \leq i < m}(f_{2i}f_{2(m-i)-1}-
g_{2(m-i)-1}f_{2i}),\ m \geq 0,
\\
\label{pejsek_a_kocicka}
\pa f_{2m+1} &:=& \sum_{0 \leq j \leq m} g_{2j}f_{2(m-j)} -
               \sum_{0 \leq j < m} f_{2j+1}f_{2(m-j)-1},\ m \geq 1,
\\
\nonumber
\pa g_{2m}   &:=& \sum_{0 \leq i < m}(g_{2i}g_{2(m-i)-1}-
f_{2(m-i)-1}g_{2i}),\ m \geq 0,
\\
\nonumber
\pa g_{2m+1} &:=& \sum_{0 \leq j \leq m} f_{2j}g_{2(m-j)} -
               \sum_{0 \leq j < m} g_{2j+1}g_{2(m-j)-1},\ m \geq 1.
\end{eqnarray*}

The above resolution is ``minimal' in the following sense.
Consider a one-parametric family $\Iso_\varepsilon$
of $\{\texttt{B},\texttt{W}\}$-colored operads defined by
\[
\Iso_\varepsilon
:= \frac{\sfF(f,g)}{(fg=\varepsilon \cdot 1_\texttt{W},
gf= \varepsilon\cdot 1_\texttt{B})},
\]
where $\varepsilon$ is a formal parameter.
The operad $\Iso_0$ clearly describes couples $(F,G)$ of
maps $F :U \to V$ and $G : V \to U$ such that $FG = 0$ and $GF = 0$,
while $\Iso_\varepsilon$ is, at a generic $\varepsilon$, isomorphic to
the operad $\Iso$. In other words, $\Iso$ is a deformation of
$\Iso_0$.
It turns out that $\Iso_0$ is an augmented colored operad that admits
a minimal model whose generators are the same as the generators of
$\Riso$; see~\cite[Theorem~10]{markl:ip}.
\end{example}

An obvious generalization of the
machinery developed in the previous sections applies verbatim
to resolutions of diagram operads.
One typically gets an $L_\infty$-algebra with a nontrivial
`curvature' $l_0$; see~\cite[Section~5]{markl07b} for the terminology
and definitions.
The corresponding Maurer-Cartan equation
then involves the $l_0$-term.

\begin{example}
\label{ex:LinftyIso}
Let us describe the $L_\infty$-deformation complex
$(\CIso*,l_0,l_1,l_2,\ldots)$ for a diagram $T$ as
in~(\ref{napise_mi_Magda_jeste?}). As we already observed, this
deformation complex has, as a consequence of the presence of $1$
in the formulas for $\pa f_1$ and $\pa g_1$ in $\Riso$, a nontrivial
$l_0$. On the other hand, since the differential $\pa$ on the
generators of $\Riso$ does not have higher than quadratic terms, all
$l_k$'s are trivial, for $k\geq 3$.

Formula~(\ref{eq:CP}) applied to
the resolution $\Riso$ from Example~\ref{Magda} gives
the underlying cochain complex
\[
\CIso n =
\begin{cases}
\Hom(U,V) \oplus \Hom(V,U) & \text{for } n \geq 1 \text{ odd, and} \\
\Hom(U,U) \oplus \Hom(V,V) & \text{for } n \geq 1 \text{ even.}
\end{cases}
\]
Formula~(\ref{eq:lkf}) makes sense also for $k=0$ and describes
$l_0 \in \CIso2$ as the direct sum of the identity maps $\Id_U \oplus
\Id_V \in \Hom(U,U) \oplus \Hom(V,V)$.

Likewise, one obtains the following formulas for the operation
\[
l_1: \CIso * \to \CIso{*+1}.
\]
If $\alpha \oplus \beta \in \CIso n$, $n \geq 1$ odd, then
\[
l_1(\alpha \oplus \beta) = (G \alpha + \beta F) \oplus (\alpha G +
F\beta)
\in \CIso {n+1}.
\]
For $\gamma \oplus \delta \in \CIso n$, $n \geq 1$ even, we have
\[
l_1(\gamma \oplus \delta) = (F\gamma - \delta F) \oplus (G\delta
-\gamma G) \in \CIso {n+1}.
\]

The bracket
\[
l_2: \CIso m \otimes \CIso n \to \CIso {m+n}
\]
is given as follows. For $\alpha'
\oplus \beta' \in \CIso m$, $\alpha'' \oplus \beta'' \in \CIso n$, $m,n$
odd, we have
\[
l_2(\alpha' \oplus \beta',\alpha'' \oplus \beta'') =
(\beta'\alpha'' + \beta''\alpha') \oplus (\alpha' \beta '' + \alpha''
\beta')
\in \CIso {m+n}.
\]
For $\alpha
\oplus \beta \in \CIso m$, $\gamma \oplus \delta \in \CIso n$, $m$
odd, $n$ even, we have
\[
l_2(\alpha \oplus \beta,\gamma \oplus \delta) =
- l_2(\gamma \oplus \delta,\alpha \oplus \beta) =
(\alpha\gamma - \delta\alpha) \oplus (\beta\delta - \gamma\beta)
\in \CIso {m+n}.
\]
Finally, for $\gamma' \oplus \delta' \in \CIso m$,
$\gamma'' \oplus \delta'' \in \CIso n$,
$m,n$ even, we have
\[
l_2(\gamma' \oplus \delta',\gamma'' \oplus \delta'') =
-l_2(\gamma'' \oplus \delta'',\gamma' \oplus \delta')
=
(\gamma'\gamma'' - \gamma''\gamma') \oplus (\delta'\delta'' - \delta''\delta')
\in \CIso {m+n}.
\]
The higher $l_k$'s, $k \geq 3$, are trivial.

Observe that, for each $w \in \CIso *$, $l_2(l_0,w) = 0$; therefore,
by~\cite[Section~5]{markl07b},
$l_1^2 =0$. In other words, $l_1$ is a differential
and the standard analysis of deformation theory
applies. For instance,
there exists the canonical element $\chi := F \oplus G  \in
\CIso1$ such that
\[
l_1(w) = l_2(\chi,w),\ w \in \CIso *.
\]

As expected~\cite{markl07b,van},
$\Iso$-algebras are solutions of the Maurer-Cartan equation
in the $L_\infty$-complex  of the trivial $\bfD$-algebra $T_{\mbox{\o}}$
with $F=0$, $G=0$. Indeed, if $\kappa = \Phi \oplus \Psi
\in \CIso1$, then the Maurer-Cartan equation
\[
-l_0  + \frac 12 l_2(\kappa,\kappa) = 0
\]
expands into
\[
-(\Id_U \oplus \Id_V) + \frac 12(2\Psi \Phi \oplus 2\Phi \Psi) = 0,
\]
which says precisely that $\Phi$ and $\Psi$ are mutually inverse
isomorphisms.
\end{example}

%\bibliography{b}

\begin{thebibliography}{99}
\bibitem{abe}E. Abe, Hopf algebras, Cambridge Tracts in Mathematics \textbf{74}, Cambridge Univ. Press, Cambridge, 1977.

\bibitem{boardman}J. M. Boardman, Stable operations in generalized cohomology, in: Handbook of algebraic topology, 585-686, North-Holland, 1995.

\bibitem{borisov}D. V. Borisov, Formal deformations of morphisms of associative algebras, Int. Math. Res. Notices \textbf{41} (2005), 2499-2523.

\bibitem{brez01}T. Brzezi\'{n}ski, The cohomology structure of an algebra entwined with a coalgebra, J. Algebra \textbf{235} (2001), 176-202.

\bibitem{bh}T. Brzezi\'{n}ski and P. M. Hajac, Coalgebra extensions and algebra coextensions of Galois type, Comm. Alg. \textbf{27} (1999), 1347-1368.

\bibitem{bm}T. Brzezi\'{n}ski and S. Majid, Coalgebra bundles, Comm. Math. Phys. \textbf{191} (1998), 467-492.

\bibitem{d}S. D\u{a}s\u{a}lescu, C. N\u{a}st\u{a}sescu and \c{S}. Raianu, Hopf algebra: An introduction, Monographs and Textbooks in Pure and Applied Math. \textbf{235}, Marcel Dekker Inc., New York, 2001.

%\bibitem{es}D. B. A. Epstein and N. E. Steenrod, Cohomology operations, Ann. Math. Studies \textbf{50}, Princeton Univ.\ Press, Princeton, 1962.

\bibitem{frt}L. Faddeev, N. Reshetikhin, and L. Takhtajan, Quantum Groups, in: Braid group, Knot Theory and Statistical Mechanics, Eds. C. N. Yang and M. L. Ge, World Scientific, Singapore, 1989.

\bibitem{fregier}Y. Fr\'egier, A new cohomology theory associated to deformations of Lie algebra morphisms, Lett. Math. Phys. \textbf{70} (2004), 97-107.


\bibitem{fregier2}Y. Fr\'egier, Minimal model for colored prop
associated to morphisms of Lie algebras, to appear.

\bibitem{gan}W. L. Gan, Koszul duality for dioperads, Math. Res. Lett. \textbf{10} (2003), 109-124.

\bibitem{ger63}M. Gerstenhaber, The cohomology structure of an associative ring, Ann. Math. \textbf{78} (1963), 267-288.

\bibitem{ger64}M. Gerstenhaber, On the deformation of rings and algebras, Ann. Math. \textbf{79} (1964), 59-103.

\bibitem{gs83}M. Gerstenhaber and S. D. Schack, On the deformation of algebra morphisms and diagrams, Trans. Amer. Math. Soc. \textbf{279} (1983), 1-50.

\bibitem{gs85}M. Gerstenhaber and S. D. Schack, On the cohomology of an algebra morphism,  J. Algebra \textbf{95} (1985), 245-262.

\bibitem{gs88}M. Gerstenhaber and S. D. Schack, Sometimes $H^1$ is $H^2$ and discrete groups deform, Contemp. Math. \textbf{74} (1988), 149-168.


\bibitem{gs05} M. Gerstenhaber, A. Giaquinto and S.D. Schack, Diagrams of Lie algebras. J. Pure Appl. Algebra \textbf{196} (2005), no. 2-3, 169--184.

%\bibitem{gs90}M. Gerstenhaber and S. D. Schack, Bialgebra cohomology, deformations, and quantum groups, Proc. Nat. Acad. Sci. U.S.A. \textbf{87} (1990), 478-481.

%\bibitem{gs92}M. Gerstenhaber and S. D. Schack, Algebras, bialgebras, quantum groups, and algebraic deformations, Contemp. Math. \textbf{134} (1992), 51-92.

\bibitem{gk}V. Ginzburg and M. M. Kapranov, Koszul duality for operads, Duke Math. J. \textbf{76} (1994), 203-272.

\bibitem{go}D. Guin and J.-M. Oudom, Sur l'alg\`{e}bre enveloppante d'une alg\`{e}bre pr\'{e}-Lie,
C. R. Math. Acad. Sci. Paris \textbf{340} (2005), 331-336.

\bibitem{hinich97}V. Hinich, Descent of Deligne groupoids, Int. Math. Res. Notices \textbf{5} (1997), 225-239.

\bibitem{hinich04}V. Hinich, Deformations of homotopy algebras, Comm. Alg. \textbf{32} (2004), 473-494.

\bibitem{hochschild}G. Hochschild,  On the cohomology groups of an associative algebra, Ann. Math. \textbf{46} (1945), 58-67.

\bibitem{humphreys}J. E. Humphreys, Introduction to Lie algebras and representation theory, Grad. Texts in Math. \textbf{9}, Springer-Verlag, 1997.

\bibitem{kassel}C. Kassel, Quantum group, Grad. Texts in Math.  \textbf{155}, Springer-Verlag, New York, 1995.

\bibitem{kajiura-stasheff:JMP06}
H.~Kajiura and J.~Stasheff,
\newblock Open-closed homotopy algebra in mathematical physics.
\newblock {J. Math. Phys.}, {\bf 47} (2006), 28 pages.


%\bibitem{ks}M. Kontsevich and Y. Soibelman, Deformations of algebras over operads and the Deligne conjecture, in: Conf\'{e}rence Mosh\'{e} Flato 1999, Vol. II (Dijon), pp. 255-307, Math. Phys. Stud. \textbf{22}, Kluwer, 2000.

\bibitem{kt}H. F. Kreimer and M. Takeuchi, Hopf algebras and Galois extensions of an algebra, Indiana U. Math. J. \textbf{30} (1981), 675-691.

\bibitem{ladamarkl95}T. Lada and M. Markl, Strongly homotopy Lie algebras, Comm. Alg. \textbf{23} (1995), 2147-2161.

\bibitem{ladamarkl05}T. Lada and M. Markl, Symmetric brace algebras, Appl. Cat. Structures \textbf{13} (2005), 351-370.

\bibitem{lr}L. Lambe and D. Radford, Algebraic aspects of the quantum Yang-Baxter equation, J. Algebra \textbf{154} (1992), 228-288.

\bibitem{landweber}P. S. Landweber, Cobordism operations and Hopf algebras, Trans. Amer. Math. Soc. \textbf{129} (1967), 94-110.

\bibitem{maclane63}S. Mac Lane, Natural associativity and commutativity, Rich Univ. Stud. \textbf{49(1)} (1963), 28-46.

\bibitem{maclane65}S. Mac Lane, Categorical algebra, Bull. Amer. Math. Soc. \textbf{71} (1965), 40-106.

\bibitem{markl96}M. Markl, Models for operads, Comm. Alg. \textbf{24} (1996), 1471-1500.

\bibitem{markl96b}M. Markl, Cotangent cohomology of a category and deformations, J. Pure Appl. Alg. \textbf{113} (1996), 195-218.

\bibitem{markl02}M. Markl, Homotopy diagrams of algebras, Rend. Circ. Mat. Palermo (2) Suppl. \textbf{69} (2002), 169-180.

\bibitem{markl04}M. Markl, Homotopy algebras are homotopy algebras, Forum Math. \textbf{16} (2004), 129-160.

\bibitem{markl:ip}M. Markl, Ideal perturbation lemma, Comm.~in Algebra {\bf
        29(11)} (2001), 5209--5232.

\bibitem{markl06}M. Markl, A resolution (minimal model) of the PROP for bialgebras, J. Pure Appl. Alg. \textbf{205} (2006), 341-374.

\bibitem{markl07}M. Markl, Operads and PROPS, Handbook of Algebra
  {\bf 5}, 87--140, Elsevier 2008.

\bibitem{markl07b}M. Markl, Intrinsic brackets and the $L_\infty$-deformation theory of bialgebras, \texttt{arXiv:math/0411456}.

\bibitem{mss}M. Markl, S. Shnider, and J. Stasheff, Operads in Algebra, Topology and Physics, Math. Surveys and Monographs \textbf{96}, Amer. Math. Soc., 2002.

\bibitem{may1}J. P. May, The geometry of iterated loop spaces, Lectures Notes in Math. \textbf{271}, Springer-Verlag, 1972.

\bibitem{mv}S. Merkulov and B. Vallette, Deformation theory of representations of prop(erad)s, \texttt{arXiv:0707.0889}.

%\bibitem{milnor}J. Milnor, The Steenrod algebra and its dual, Ann. Math. \textbf{67} (1958), 150-171.

\bibitem{mont}S. Montgomery, Hopf algebras and their actions on rings,  CBMS Regional Conference Series in Math. \textbf{82}, Amer. Math. Soc., Providence, 1993.




\bibitem{nr1} A. Nijenhuis and R. W. Richardson,
    Cohomology and deformations of algebraic structures,  Bull. AMS \textbf{70} (1964), 406--411.

\bibitem{nr2} A. Nijenhuis and R. W. Richardson, Cohomology and deformations in graded Lie algebras, Bull. AMS \textbf{72} (1966) 1--29.


\bibitem{nr3}A. Nijenhuis and R. W. Richardson, Deformations of homomorphisms of Lie algebras, Bull. Amer. Math. Soc. \textbf{73} (1967), 175-179.

\bibitem{novikov}S.P. Novikov, Methods of algebraic topology from the point of view of cobordism theory, Izv. Akad. Nauk SSSR Ser. Mat. \textbf{31} (1967), 855-951.

\bibitem{ps} F. Panaite and D. \c{S}tefan, Deformation cohomology for Yetter-Drinfel'd modules and Hopf (bi)modules, Comm. Alg. \textbf{30} (2002), 331-345.

%\bibitem{pw}B. Parshall and J. P. Wang, On bialgebra cohomology. Algebra, groups and geometry.  Bull. Soc. Math. Belg. S\'{e}r. A \textbf{42} (1990), 607-642.

\bibitem{radford}D. Radford, Solutions to the quantum Yang-Baxter equation and the Drinfel'd double, J. Algebra \textbf{161} (1993), 20-32.

\bibitem{rt}D. Radford and J. Towber, Yetter-Drinfel'd categories associated to an arbitrary bialgebra, J. Pure Appl. Alg. \textbf{87} (1993), 259-279.

\bibitem{sch}P. Schauenburg, Hopf modules and Yetter-Drinfel'd modules, J. Algebra \textbf{169} (1994), 874-890.

\bibitem{sweedler}M. Sweedler, Hopf algebras, W.A. Benjamin Inc., 1969.

\bibitem{vallette}B. Vallette, A Koszul duality for props, Trans. of Amer. Math. Soc. \textbf{359} (2007), 4865-4993.

\bibitem{van}P. Van der Laan, Operads up to homotopy and deformations of operad maps, \texttt{arXiv:math.QA/0208041}.

\bibitem{yau}D. Yau, Deformations of coalgebra morphisms, J. Algebra \textbf{307} (2007), 106-115.

%\bibitem{yau05}D. Yau, Deformation theory of modules, Comm. Alg. \textbf{33} (2005), 2351-2359.

\bibitem{yau07}D. Yau,  Cohomology and deformation of module-algebras, J. Pure Appl. Alg. \textbf{209} (2007), 781-792.

\bibitem{yau08}D. Yau, Deformation bicomplex of module-algebras, Homology, Homotopy Appl. \textbf{10} (2008), 97-128.

\bibitem{yau08b}D. Yau, Cohomology for morphisms of Yetter-Drinfel'd and Hopf modules, Int. J. Algebra. \textbf{2} (2008), 239-252.

\bibitem{yetter}D. Yetter, Quantum groups and representations of monoidal categories, Math. Proc. Camb. Philos. Soc. \textbf{108} (1990), 261-290.
\end{thebibliography}

%%==============%%
%%              %%
%%  References  %%
%%              %%
%%==============%%

\end{document}